\begin{document}

\title[Multivariate Hawkes processes]{Markovian multivariate Hawkes population processes: Efficient evaluation of moments}

\author{Raviar S. Karim, Roger J.~A. Laeven, and Michel Mandjes}

\begin{abstract}

We provide probabilistic and computational results on Markovian multivariate Hawkes processes and induced population processes.
By applying the Markov property, we characterize in closed form a joint transform, bijective to the probability distribution, of the population process 
and its underlying intensity process.
We demonstrate a method that exploits the transform to obtain analytic expressions for transient and stationary multivariate moments of any order, as well as auto- and cross-covariances.
We reveal a nested sequence of block matrices that yields the moments in explicit form 
and brings important computational advantages.
We also establish the asymptotic behavior of the intensity of the multivariate Hawkes process in its nearly unstable regime, under a specific parameterization.
In extensive numerical experiments, we analyze the computational complexity, accuracy and efficiency of the established results.

\smallskip

\noindent
\textsc{Keywords.} Hawkes processes $\circ$ mutual excitation $\circ$ Markov processes $\circ$ transform analysis $\circ$ moment computations

\smallskip

\noindent
\textsc{Acknowledgements and Affiliations.}
We are very grateful to Jan Magnus for helpful comments.
RK and RL are with the Dept.\ of Quantitative Economics, University of Amsterdam.
RL is also with E\textsc{urandom}, Eindhoven University of Technology, and with C\textsc{ent}ER, Tilburg University;
his research was funded in part by the Netherlands Organization for Scientific Research under grants NWO VIDI 2009 and NWO VICI 2019/20.
MM is with the Korteweg-de Vries Institute for Mathematics, University of Amsterdam. 
MM is also with E\textsc{urandom}, Eindhoven University of Technology,
and with the Amsterdam Business School, University of Amsterdam;
his research was funded in part the Netherlands Organization for Scientific Research under the Gravitation project N\textsc{etworks}, grant 024.002.003.

\noindent
Email: \tt{r.s.karim@uva.nl}, \tt{r.j.a.laeven@uva.nl}, \tt{m.r.h.mandjes@uva.nl}.
\end{abstract}

\maketitle

\newpage

{\small
\tableofcontents}

\newpage

\section{Introduction}

Since the onset of globalization, the mechanisms according to which events spread have become increasingly complex.
Events involving disease contagion, financial panic, news that goes viral, are all subject to forms of propagation that occur through time and space, be it across human populations, equity markets, or media outlets.
A multitude of mathematical models have been proposed to describe the corresponding underlying dynamics.

Multivariate point processes constitute one such class of models that describe the random nature of the arrival, and subsequent spread, of events, in the time as well as space domain.
In particular, the subclass of multivariate Hawkes processes (\cite{H71,H71-2}) provides a rich structure that is capable of capturing contagious dynamics.
These Hawkes processes allow for flexible dependencies of events, due to the inherent feedback mechanisms known as \textit{self}- and \textit{cross-excitation}.

Recently, Hawkes processes have been increasingly used as the arrival process of infinite-server queues (\cite{DP18,KSBM18}). 
Such infinite-server queues can be viewed as population processes, where individuals are born (i.e., arrive) according to a Hawkes process and die (i.e., leave) after a random time. 
A related, relevant motivation to study a Hawkes-fed infinite-server queue is to account for infected and recovered subjects in a population.
Hawkes processes have been widely applied in epidemiology, in conjunction with SIR models (\cite{R18}) and dynamic contagion models, e.g.,\ in the context of the COVID-19 pandemic (\cite{C20}).
Throughout this paper, we use the terminology of population processes and infinite-server queues interchangeably. 

An important strand of research considers a \textit{multivariate} framework to allow for cross-exciting dynamics between subpopulations (for instance, residing at different geographic locations). 
Also in various other domains this type of model offers a natural framework, for example when considering the number of simultaneous online visitors of a specific website or to describe social interaction. 
In the operations management literature, (multivariate) Hawkes processes have gained significant interest over the past decade, e.g.\ in the context of modeling customer support contact centers (\cite{C23,YT22}), in the context of jump propagation in spot and options markets (\cite{DL17,ALP14}), and in relation to online advertising (\cite{XDW14}).

When analyzing infinite-server queues with multivariate Hawkes input, the main challenge lies in unraveling, and tractably representing, its inherently complex probabilistic structure.
In this paper our main objective is to devise computational techniques to efficiently and accurately evaluate moments.
We consider the Markovian case, i.e., the case in which the excitation functions are exponentially decaying; see (\cite{H71,O75}).
The Markov property is used to obtain a characterization of the transform of the joint process, that is, the Hawkes intensity process and the infinite-server queue, in terms of a system of differential equations.

\vspace{2mm}

\noindent \textit{Contributions.}
This paper makes five contributions.
We start by deriving in closed form a joint transform --- bijective to the joint probability distribution  --- of the multivariate population process and its underlying intensity process,
exploiting their Markovian nature.
We allow for general distributions of the intensity jump sizes and allow the processes to be evaluated at potentially multiple future time instances, thus characterizing all cross-sectional and temporal probabilistic features.
Second, we employ this joint transform to obtain explicit, recursive expressions for both the transient and stationary multivariate cross-moments.
Third, we show that the higher-order transient and stationary moments can be obtained in closed form from a nested sequence of block matrices, having important computational advantages.
Fourth, for a specifically chosen set of parameters, we establish the limiting behavior of the underlying intensity process of the multivariate population process in the practically relevant, nearly unstable situation, where the stability condition is close to being violated; cf.\ the \textit{heavy-traffic regime} in queueing theory.
We thus substantially extend earlier results pertaining to the case that the underlying Hawkes process is single-dimensional (\cite{DZ11, DP18,KSBM18,DP19}); cf.\ also \cite{CHY20}.

The final contribution concerns the use of our analytic findings when devising efficient computational techniques for evaluating moments.
To this end, we analyze in numerical experiments the methods developed in this paper and assess their accuracy and efficiency.
The methods we develop show superior performance in terms of speed and accuracy when compared to the computational alternative of applying finite difference schemes to the joint transform.
Our computational approach is fast and accurate: it provides near-instant response that is exact up to machine precision. 
When using the nested block matrices, it has the attractive feature that the computation speed does not depend on the value of the considered time horizon. 
Moreover, when compared to the simulation alternative, a substantial number of Monte Carlo simulation runs is needed to sufficiently precisely approximate the object of interest, so that the performance gain is even orders of magnitude larger.
The computer codes that implement the methods developed in this paper are available at \url{https://github.com/RaviarKarim/HawkesMarkov}.

In the computation of the transient and stationary moments, we focus on two settings, which are in a sense each others dual, namely the bivariate setting with moments of arbitrary order and the multivariate model with moments up to second order.
The generic recursive structure for computing the transient moments resembles the one by which their stationary counterparts can be evaluated, with one crucial difference:
in the transient setting we obtain recursive systems of non-homogeneous linear differential equations, whereas in the stationary setting we obtain recursive systems of linear algebraic equations.

An important application that our results make possible, is moment-based estimation of multivariate Hawkes processes.
Such an estimation approach requires evaluating a collection of moments, auto-covariances and cross-covariances a (very) large number of times, for the parameter vector proposed at each new iteration of the optimization routine. 
Existing approaches to evaluating moments and other distributional characteristics are computationally prohibitively expensive for this purpose.
One might employ the explicit approximate moments, obtained from the infinitesimal Markov generator using operator methods and Taylor expansions applied to short time intervals, as in \cite{ACL15}.
This approach, however, requires data sampled at least at the daily frequency. 
Our approach is based on exact expressions of the moments, auto-covariances and cross-covariances over time intervals of arbitrary length, and their numerical evaluation remains fast and accurate.
Another advantage of our approach is that it facilitates comparative statics, i.e., it provides an efficient, tractable link between the objects of interest and the parameters of the process.

\vspace{2mm}

\noindent \textit{Organization.}
This paper is organized as follows.
In Section~\ref{sec:ModelBranching}, we introduce the Markovian multivariate Hawkes process and the induced population process.
In Section~\ref{sec:TransformAnalysis}, we derive a characterization of the joint transform using the Markov property of the process.
Subsequently, we use this result to obtain relations for the transient and stationary moments.
In Section~\ref{sec: recursive}, we show the underlying recursive structure in the computation of moments of certain order and dimension.
In Section~\ref{sec: nested block matrices}, we focus on the bivariate setting, revealing a nested block structure of matrices that characterize the moments, enabling fast computation.
In Section~\ref{sec: nearly unstable}, we analyze the nearly unstable case for the intensity of the process.
Section~\ref{sec: numerics} provides our numerical experiments.
Conclusions are in Section~\ref{sec: conclusion}. 
Proofs, technical derivations and details, and additional numerical experiments are relegated to six Appendices.

\section{Multivariate Hawkes Populations}\label{sec:ModelBranching}
In this section we define the multivariate Hawkes process, by means of the associated conditional intensity function, as well as its induced population process.
Hawkes processes, first introduced in a series of papers \cite{H71,H71-2,HO74}, are a class of point processes that exhibit self-exciting behavior, in the sense that the current value of the associated intensity function depends on the history of the point process.
In this paper, we focus on multivariate Hawkes processes of the Markovian type.

We consider a $d$-dimensional point process $\bm{N}(\cdot) \equiv (\bm{N}(t))_{t\in\rr_+} = ((N_1(t),\dots,N_d(t)))^\top_{t\in\rr_+}$, which records the number of points in each component $N_i(t)$, with $i\in [d] := \{1,\dots,d\}$, up to and including time $t$.
It is well known that a point process is characterized by its conditional intensity function $\bm{\lambda}(t) = (\lambda_1(t),\dots,\lambda_d(t))^\top$,
see \cite[Chapter 7]{DVJ07}.

\begin{definition} \label{def: multivariate Markov Hawkes process}
A \textrm{Markovian multivariate Hawkes process} is a point process $\bm{N}(\cdot)$, with $\bm{N}(0)=\bm{0}$, whose components $N_i(\cdot)$ satisfy
\begin{align}
\label{eq: def components N_i}
\begin{split}
    \pp(N_i(t+\Delta) - N_i(t) = 0 \, | \, \cF_t ) &= 1 - \lambda_i(t)\Delta + o(\Delta),\\
    \pp(N_i(t+\Delta) - N_i(t) = 1 \, | \, \cF_t ) &= \lambda_i(t)\Delta + o(\Delta), \\
    \pp(N_i(t+\Delta) - N_i(t) > 1 \, | \, \cF_t ) &= o(\Delta),
\end{split}
\end{align}
as $\Delta \downarrow 0$, with $\cF_t = \sigma(\bm{N}(s) \, : \, s \leqslant t)$ denoting the natural filtration generated by $\bm{N}(\cdot)$.
Here, each component $\lambda_i(t)$ of the intensity function satisfies the mean-reverting dynamics
\begin{align}
\label{eq: intensity dynamics definition}
    \ddiff\lambda_i(t) = \alpha_{i}(\overline{\lambda}_i -\lambda_i(t))\,\ddiff t + \sum_{j=1}^d B_{ij}(t) \,\ddiff N_j(t),
\end{align}
where $\lambda_i(0) = \overline{\lambda}_i \geqslant 0$, $\alpha_i \geqslant 0$, and for each $i,j\in[d]$, $(B_{ij}(t))_t$ is a sequence of independent random variables, distributed as the generic non-negative random variable $B_{ij}$.
\end{definition}

The intuitive explanation behind Definition \ref{def: multivariate Markov Hawkes process} goes as follows.
The constants $\overline{\lambda}_i$ are referred to as the \textit{base rates}.
When a point is generated in component $j\in[d]$, $N_j(t)$ increases by one and makes intensity $\lambda_i(t)$, for all $i\in[d]$, jump by a value $B_{ij}(t)$ that is distributed as the random variable $B_{ij}$.
This jump in the intensity $\lambda_i(t)$ caused by $B_{ij}(t)$ increases the probability of new points being generated in component $i$, thereby more likely to increase $N_i(t)$.
After the jump has occurred, the intensity $\lambda_i(t)$ decays exponentially with rate $\alpha_{i}$, in the direction of the base rate $\overline{\lambda}_i$.
These jumps in intensities and the subsequent decay is what makes points cluster across time and space. 
That is, for any $j\in[d]$, when points in $N_j(t)$ cause $\lambda_j(t)$ to jump, there is a pure temporal effect and we speak of \textit{self-excitation}, while an effect on $\lambda_i(t)$, with $i\neq j$, has an additional spatial effect and we speak of \textit{cross-excitation}.

It is well-known that with exponential decay the joint process $(\bm{N}(\cdot),\bm{\lambda}(\cdot))$ is a Markov process, see e.g.\ \cite{LTP15, L09, O75}.
Furthermore, by It\^{o}'s Lemma applied to $f(t,\lambda_i(t)) = e^{\alpha_i t}\lambda_i(t)$, Eqn.~\eqref{eq: intensity dynamics definition} can alternatively be expressed as
\begin{align}
\label{eq: intensity integral definition}
    \lambda_i(t) = \overline{\lambda}_i + \sum_{j=1}^d \int_0^t B_{ij}(s) \,e^{-\alpha_{i}(t-s)}\,\mathrm{d}N_j(s),
\end{align}
where we used that $\bm{N}(0) = \bm{0}$, implying $\lambda_i(0) = \overline{\lambda}_i$.
The exponential function $g_i(t) := e^{-\alpha_i t}$ in Eqn.~\eqref{eq: intensity integral definition} is known in the literature as the \textit{decay function}.
We emphasize that only exponentially decaying  $g_i(\cdot)$ render the joint process $(\bm{N}(\cdot),\bm{\lambda}(\cdot))$ a Markov process.

To ensure stability of the multivariate Hawkes process, \cite{H71-2,BM96} show that we must impose a stability condition.
In what follows, we assume that this stability condition applies.

\begin{assumption} \label{ass: stability condition}
With $\rho(\cdot)$ denoting the spectral radius of a matrix, assume that $\rho(\bm{H}) < 1$, where the matrix $\bm{H} = (h_{ij})_{i,j\in[d]}$ has elements
\begin{align} \label{eq: general stability condition}
    h_{ij} \equiv \ee[B_{ij}] \int_0^{\infty} e^{-\alpha_{i}t}\,\mathrm{d}t = \ee[B_{ij}]/ \alpha_{i}.
\end{align}
\end{assumption}


In the model discussed in this paper, the Markovian multivariate Hawkes process will serve as the arrival process of a multivariate population process. Our overarching goal is to compute various quantities pertaining to the (joint) distribution of the arrival process and the population process. 
A similar setup has been considered in \cite{DP18, KSBM18}, where the univariate Hawkes process serves as the arrival process for an infinite-server queue in which arriving customers reside for independent and identically distributed (i.i.d.) amounts of time (often assumed exponentially distributed).
The following definition generalizes that framework to the multivariate setting. 

\begin{definition}
Let $\bm{N}(\cdot)$ be a Markovian multivariate  Hawkes process as given in Definition~\ref{def: multivariate Markov Hawkes process}.
Define the associated \textrm{Hawkes population process} $\bm{Q}(\cdot)$, with $\bm{Q}(0)=\bm{0}$, by setting, for any component $i$ and $t\geqslant 0$,
\begin{align}
\label{eq: def hawkes population process}
    Q_i(t) :=\int_0^t \bm{1}_{\{E_i(s) > t-s\}}\, \ddiff N_i(s),
\end{align}
where $(E_i(s))_s$ is a sequence of independent random variables, exponentially distributed with parameter $\mu_i$, also independent of the multivariate arrival process ${\bs N}(t)$.
\end{definition}

The above definition entails that each arrival in component $i\in[d]$ remains in the system for an exponentially distributed amount of time.
In demography, one can think of $Q_i(\cdot)$ as the number of individuals in a subpopulation $i\in[d]$, where each $E_i(s)$ models the lifetime of an individual in this subpopulation.
In epidemiology, $Q_i(\cdot)$ would represent the number of individuals infected by a disease at location $i\in[d]$, where $E_i(s)$ would model the duration from infection to recovery (or death).
In queuing terminology, this type of system can be interpreted as a special case of an infinite-server queue.
We note that if $\mu_i \equiv 0$, then no points ever depart from component $i$, and hence ${Q}_i(\cdot) \equiv N_i(\cdot)$.
However, in some of the expressions we will encounter in this paper one must take proper care of taking the limit $\mu_i \downarrow 0$, so as to avoid dividing by zero; note that if $\mu_i=0$, then $Q_i(\cdot)$  eventually grows unbounded.

The application of our methodology to the class of multivariate Hawkes population processes, as performed in the present paper, may be viewed as a ``proof of principle''. 
We would like to stress that our methodology can, in principle, be applied to any general multivariate Markov process.

\section{Transform and Joint Moments}\label{sec:TransformAnalysis}

The objective of this section is to analyze the joint transform of $(\bm{Q}(t),\bm{\lambda}(t))$ for any fixed $t\in\rr_+$, and to use this transform to obtain an algorithm to determine the corresponding (joint) moments.
To this end, define, for given initial values $\bm{Q}(t_0) = \bm{Q}_0 \in \nn_0^d$ and $\bm{\lambda}(t_0)= \bm{\lambda}_0\in\rr_+^d$ for some $0\leqslant t_0<t$, the conditional joint transform
\begin{align}\label{eq: def joint transform Q L condition time t0}
    \zeta_{t_0}(t,\bm{s},\bm{z}) 
    = \ee_{t_0}\big[ \prod_{i=1}^d z_i^{Q_i(t)} e^{-s_i \lambda_i(t)}\big]
    :=\ee\big[ \prod_{i=1}^d z_i^{Q_i(t)} e^{-s_i \lambda_i(t)} \, |\, \bm{Q}(t_0) = \bm{Q}_0, \bm{\lambda}(t_0) = \bm{\lambda}_0 \big],
\end{align}
where $t\geqslant 0$, $\bm{s} \in \rr_+^d$ and $\bm{z} \in [-1,1]^d$.
In the specific case that $t_0=0$, with the assumed initial conditions $\bm{Q}(0) = \bm{0}$ and $\bm{\lambda}(0) = \bm{\overline{\lambda}}$, we define $ \zeta(t,\bm{s},\bm{z})
    \equiv \zeta_0(t,\bm{s},\bm{z})$ as
\begin{align} \label{eq: def joint transform Q L condition time 0}
    \zeta(t,\bm{s},\bm{z})
    :&=\ee\big[ \prod_{i=1}^d z_i^{Q_i(t)} e^{-s_i \lambda_i(t)}\big]
    = \ee\big[ \prod_{i=1}^d z_i^{Q_i(t)} e^{-s_i \lambda_i(t)}\, |\, \bm{Q}(0) = \bm{0}, \bm{\lambda}(0) = \bm{\overline{\lambda}}\big],
\end{align}
where the expectation operator $\ee[\cdot]$ is understood as the conditional $\ee_0[\cdot]$.

\subsection{Transform characterization} \label{sec: characterization}

The following theorem, proven in Appendix~\ref{appendix: proofs}, identifies the joint transform $\zeta_{t_0}(t,\bm{s},\bm{z})$. Define  $\beta_j(\bm{s}) := \ee[e^{-\bm{s}^\top \bm{B}_j}] = \ee[\exp(-\sum_{i=1}^d s_iB_{ij})]$.

\begin{theorem} \label{thm: zeta characterization}
Fix $t\in\rr_+$, and assume $\bm{Q}(t_0) = \bm{Q}_0 \in\nn_0^{d}$ and $\bm{\lambda}(t_0) = \bm{\lambda}_0 \in\rr_+^d$ for some $0 \leqslant t_0 <t$. Then, for any $\bm{z}\in[-1,1]^d$, $\bm{s}\in\rr_+^d$,
\begin{align} \label{eq: joint transform conditioned at t_0}
    \zeta_{t_0}(t,\bm{s},\bm{z})
    &= \prod_{j=1}^d \hat{z}_j(t_0)^{Q_{j,0}} \exp \Big( -\tilde{s}_j(t)\lambda_{j,0} -\overline{\lambda}_j \alpha_j \int_{t_0}^t \tilde{s}_j(u)\ddiff u \Big),
\end{align}
where, for $t_0 \leqslant u \leqslant t$ and $j\in[d]$, the functions $\hat{z}_j(\cdot)$ and $\tilde{s}_j(\cdot)$ satisfy
\begin{align} \label{eq: thm zeta_t0 joint transform eqn for hatZ and tildeS}
\begin{split}
    &\hat{z}_j(u) = 1 + (z_j-1)e^{-\mu_j(t-u)}, \\
    &\frac{\mathrm{d} \tilde{s}_j(u)}{\mathrm{d} u} + \alpha_j \tilde{s}_j(u) + \big(1 + (z_j - 1)e^{-\mu_j(u-t_0)}\big) \beta_j(\tilde{\bm{s}}(u)) - 1 = 0,
\end{split}
\end{align}
with  boundary condition $\tilde{s}_j(t_0) = s_j$.
\end{theorem}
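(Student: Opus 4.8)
The plan is to exploit the Markov property of the joint process $(\bm{Q}(\cdot),\bm{\lambda}(\cdot))$ — which holds because the exponential decay renders $\bm{\lambda}$ Markovian and the exponential lifetimes make departures in component $i$ occur at the memoryless rate $\mu_i Q_i(t)$ — and, for fixed terminal data $(t,\bm{s},\bm{z})$, to construct a deterministic family of functions that turns a product-exponential functional of the state into a martingale (equivalently, to solve the Kolmogorov backward equation for $\zeta_{t_0}$ by an ansatz). First I would write down the extended generator $\mathcal{A}$ of $(\bm{Q},\bm{\lambda})$ acting on $f(\bm{Q},\bm{\lambda})$: a drift part $\sum_i \alpha_i(\overline{\lambda}_i-\lambda_i)\partial_{\lambda_i} f$ from mean reversion; a death part $\sum_i \mu_i Q_i\,[f(\bm{Q}-\bm{e}_i,\bm{\lambda})-f(\bm{Q},\bm{\lambda})]$ from departures; and a birth part $\sum_j \lambda_j\,\ee[f(\bm{Q}+\bm{e}_j,\bm{\lambda}+\bm{B}_j)-f(\bm{Q},\bm{\lambda})]$ from arrivals in component $j$, which simultaneously increment $Q_j$ and shift $\bm{\lambda}$ by the jump vector $\bm{B}_j=(B_{1j},\dots,B_{dj})^\top$.

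The key structural observation is that $\mathcal{A}$ maps the product-exponential family into itself up to a state-affine prefactor. Applying $\mathcal{A}$ to $f(\bm{Q},\bm{\lambda})=\prod_k p_k^{Q_k}e^{-q_k\lambda_k}$, the death term contributes a factor $(p_i^{-1}-1)$ per unit of $Q_i$, the drift term contributes $\alpha_i q_i$ per unit of $\lambda_i$ together with the constant $-\alpha_i\overline{\lambda}_i q_i$, and the birth term contributes $(p_j\beta_j(\bm{q})-1)$ per unit of $\lambda_j$, where exactly $\beta_j(\bm{q})=\ee[e^{-\bm{q}^\top\bm{B}_j}]$ emerges from integrating out the jump. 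Hence $\mathcal{A} f = \big[\sum_i \mu_i Q_i(p_i^{-1}-1)+\sum_i \lambda_i(\alpha_i q_i+p_i\beta_i(\bm{q})-1)-\sum_i\alpha_i\overline{\lambda}_i q_i\big]f$, which is affine in $(\bm{Q},\bm{\lambda})$. I would then consider $M(v):=\big(\prod_j p_j(v)^{Q_j(v)}e^{-q_j(v)\lambda_j(v)}\big)r(v)$ for $v\in[t_0,t]$, apply the Dynkin/It\^o formula for drift-plus-jump Markov processes, and require its finite-variation (drift) part to vanish identically.

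Because the drift must vanish for every visited value of $(\bm{Q}(v),\bm{\lambda}(v))$, I would match coefficients. The coefficient of $Q_k$ gives $p_k'=\mu_k(p_k-1)$, solved under $p_k(t)=z_k$ as $p_k(v)=1+(z_k-1)e^{-\mu_k(t-v)}$, which is exactly $\hat{z}_k(v)$. The coefficient of $\lambda_k$ gives the Riccati-type system $q_k'=\alpha_k q_k+p_k\beta_k(\bm{q})-1$ with $q_k(t)=s_k$, and the constant term gives $r'/r=\sum_i\alpha_i\overline{\lambda}_i q_i$ with $r(t)=1$. These terminal choices ensure $M(t)=\prod_j z_j^{Q_j(t)}e^{-s_j\lambda_j(t)}$, so that $\ee_{t_0}[M(t)]=\zeta_{t_0}(t,\bm{s},\bm{z})$. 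Introducing the time-reversal $\tilde{s}_j(u):=q_j(t_0+t-u)$ converts the backward ODE for $q_k$ into precisely the forward equation \eqref{eq: thm zeta_t0 joint transform eqn for hatZ and tildeS} with $\tilde{s}_j(t_0)=s_j$, identifies $q_j(t_0)=\tilde{s}_j(t)$ as the coefficient of $\lambda_{j,0}$, and sends $\int_{t_0}^t q_i(w)\,\ddiff w$ to $\int_{t_0}^t \tilde{s}_i(u)\,\ddiff u$; evaluating the martingale identity $\ee_{t_0}[M(t)]=M(t_0)$ then yields \eqref{eq: joint transform conditioned at t_0}.

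The main obstacle is the rigorous justification rather than the algebra. I would first establish well-posedness of the coupled nonlinear system for $\tilde{\bm{s}}$ on all of $[t_0,t]$: since each $\beta_j$ is a Laplace transform of a nonnegative vector, it is smooth and bounded by $1$ on $\rr_+^d$, so the right-hand side is locally Lipschitz with at most affine growth, giving global existence and uniqueness; an invariance argument at $\tilde{s}_j=0$, where the derivative equals $1-\hat{z}_j\,\beta_j(\tilde{\bm{s}})\geqslant 0$ (as $\hat{z}_j\in[z_j,1]\subseteq[-1,1]$ and $\beta_j\leqslant 1$), confirms the solution stays in $\rr_+^d$ so that every $\beta_j(\tilde{\bm{s}})$ remains well-defined. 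I would then upgrade $M$ from a local to a true martingale via uniform boundedness on $[t_0,t]$: with $\bm{z}\in[-1,1]^d$ one has $|\hat{z}_j(v)|\leqslant 1$, hence $|\prod_j \hat{z}_j(v)^{Q_j(v)}|\leqslant 1$; with $q_j\geqslant 0$ one has $e^{-q_j(v)\lambda_j(v)}\leqslant 1$; and $r(v)$ is continuous, hence bounded. Some care is needed to check the applicability of Dynkin's formula (finite jump intensity and integrable jumps, guaranteed by the finiteness of $\ee[B_{ij}]$ and Assumption~\ref{ass: stability condition}), after which substituting the explicit $\hat{z}_j$, $\tilde{s}_j$ and $r$ completes the proof.
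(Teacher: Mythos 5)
Your proposal is correct, but it reaches the result by a genuinely different route than the paper. The paper works forward from the joint law: it writes a balance equation for $F(t,\bm{\nu},\bm{k})=\pp(\bm{\lambda}(t)\leqslant\bm{\nu},\bm{Q}(t)=\bm{k})$ over an infinitesimal time step, differentiates to get a master equation for the density, then applies the $d$-dimensional Laplace transform and the $\bm{z}$-transform (a lengthy term-by-term computation occupying Appendix~\ref{appendix:transform computations}) to arrive at the first-order PDE \eqref{eq: PDE equation in terms of zeta} for $\zeta_{t_0}$, which it finally solves by the method of characteristics. You instead apply the extended generator directly to the product-exponential class, observe that $\mathcal{A}f$ is affine in $(\bm{Q},\bm{\lambda})$, and match coefficients so that $M(v)=r(v)\prod_j p_j(v)^{Q_j(v)}e^{-q_j(v)\lambda_j(v)}$ is a martingale on $[t_0,t]$. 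The two approaches meet at exactly the same ODE system: your equations $p_k'=\mu_k(p_k-1)$, $q_k'=\alpha_kq_k+p_k\beta_k(\bm{q})-1$ and $r'/r=\sum_i\alpha_i\overline{\lambda}_iq_i$ are precisely the paper's characteristic system \eqref{eq: characteristic system ODEs} together with the evolution of $\hat{\zeta}_{t_0}$ along characteristics, and your time reversal $\tilde{s}_j(u)=q_j(t_0+t-u)$ is the same substitution the paper uses to convert the terminal-value problem into \eqref{eq: thm zeta_t0 joint transform eqn for hatZ and tildeS}. What your route buys is the elimination of all density-level and transform bookkeeping, replaced by a one-line computation of $\mathcal{A}$ on exponentials; what it costs is the need to justify the generator's form on this function class and to upgrade the local martingale to a true one, which you handle correctly via the bounds $|\hat{z}_j|\leqslant 1$, $q_j\geqslant 0$ (secured by your orthant-invariance argument for the ODE) and the integrability of $\ee[\lambda_i(s)]$ on compact time intervals. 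Both your generator computation and your boundary/terminal conditions check out against the paper's formulas, so the argument is sound.
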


\begin{corollary} \label{cor: zeta_0 characterization}
Fix $t\in\rr_+$, and let $\bm{Q}(0) = \bm{0}$, $\bm{\lambda}(0) = \bm{\overline{\lambda}}$.
Then, for any $\bm{z}\in[-1,1]^d$, $\bm{s}\in\rr_+^d$,
\begin{align}
\label{eq: zeta characterization in terms of s(u)}
    \zeta(t,\bm{s},\bm{z}) = \prod_{j=1}^d \exp\Big( - \overline{\lambda}_j \tilde{s}_j(t)- \overline{\lambda}_j\alpha_j \int_0^t \tilde{s}_j(v)\mathrm{d} v \Big).
\end{align}
Here, for $j\in[d]$ and $v\in[0,t]$, the functions $\tilde{s}_j(\cdot)$ satisfy, with boundary condition $\tilde{s}_j(0) = s_j$,
\begin{align}
\label{eq: thm statement zeta ODE for tilde_s}
    \frac{\mathrm{d} \tilde{s}_j(v)}{\mathrm{d} v} + \alpha_j\tilde{s}_j(v) + (1 + (z_j - 1)e^{-\mu_j v})\beta_j(\tilde{\bm{s}}(v)) - 1 = 0.
\end{align}
\end{corollary}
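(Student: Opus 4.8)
The plan is to obtain the corollary as the $t_0 = 0$ specialization of Theorem~\ref{thm: zeta characterization}, which is already available and subsumes the corollary once the stated initial conditions are substituted. Accordingly, I would begin by setting $t_0 = 0$ throughout the statement of Theorem~\ref{thm: zeta characterization}, so that $\zeta_{t_0}(t,\bm{s},\bm{z})$ becomes $\zeta(t,\bm{s},\bm{z})$ in the sense of \eqref{eq: def joint transform Q L condition time 0}, and the boundary condition $\tilde{s}_j(t_0) = s_j$ reads $\tilde{s}_j(0) = s_j$.

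Second, I would feed in the two initial conditions. Since $\bm{Q}(0) = \bm{0}$, every exponent $Q_{j,0}$ equals zero, so each prefactor $\hat{z}_j(0)^{Q_{j,0}} = 1$; the entire $\hat{z}_j$-dependence of \eqref{eq: joint transform conditioned at t_0} therefore drops out, and the first line of \eqref{eq: thm zeta_t0 joint transform eqn for hatZ and tildeS} becomes irrelevant for the transform itself. Since $\bm{\lambda}(0) = \bm{\overline{\lambda}}$, we have $\lambda_{j,0} = \overline{\lambda}_j$, so the term $-\tilde{s}_j(t)\lambda_{j,0}$ in the exponent becomes $-\overline{\lambda}_j\,\tilde{s}_j(t)$. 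Combining this with the integral contribution $-\overline{\lambda}_j \alpha_j \int_0^t \tilde{s}_j(u)\,\ddiff u$ yields precisely the product in \eqref{eq: zeta characterization in terms of s(u)}.

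Third, I would specialize the differential equation itself. Setting $t_0 = 0$ in the second line of \eqref{eq: thm zeta_t0 joint transform eqn for hatZ and tildeS} turns the factor $e^{-\mu_j(u-t_0)}$ into $e^{-\mu_j u}$, so the governing equation collapses to \eqref{eq: thm statement zeta ODE for tilde_s}, equipped with the boundary condition $\tilde{s}_j(0)=s_j$ already obtained. This is a purely mechanical substitution, and renaming the integration variable from $u$ to $v$ reconciles the notation with the corollary's display.

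I do not anticipate a genuine obstacle, as the corollary is a clean substitution into the already-proven theorem. The only point that warrants an explicit sentence is the collapse of the $\hat{z}_j$ prefactors to unity: this is exactly the mechanism by which the initial state of $\bm{Q}$ is removed from the formula, and it faithfully reflects the assumption $\bm{Q}(0)=\bm{0}$. I would state this observation and then declare the result immediate.
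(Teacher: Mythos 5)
Your proposal is correct and matches the paper's (implicit) treatment: the paper offers no separate proof of Corollary~\ref{cor: zeta_0 characterization}, presenting it as the immediate $t_0=0$ specialization of Theorem~\ref{thm: zeta characterization} with $\bm{Q}(0)=\bm{0}$ and $\bm{\lambda}(0)=\bm{\overline{\lambda}}$, which is exactly the substitution you carry out. The collapse of the $\hat{z}_j(0)^{Q_{j,0}}$ prefactors to unity and the reduction of $e^{-\mu_j(u-t_0)}$ to $e^{-\mu_j u}$ are the only points that need mentioning, and you mention both.
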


Clearly, using the expression presented in Theorem~\ref{thm: zeta characterization}, in principle any conditional joint moment can be obtained.
Indeed, for any $n_{\lambda_i}, n_{Q_i} \in \nn_0$, we have
\begin{align}\label{eq: computation of moments by derivatives of JT}
    \frac{\ddiff^{n_{\lambda_1}}}{\ddiff s_1^{n_{\lambda_1}}} \cdots \frac{\ddiff^{n_{\lambda_d}}}{\ddiff s_d^{n_{\lambda_d}}} 
    \frac{\ddiff^{n_{Q_1}}}{\ddiff z_1^{n_{Q_1}}} \cdots
    \frac{\ddiff^{n_{Q_d}}}{\ddiff z_d^{n_{Q_d}}}
    \zeta_{t_0}(t,\bm{s},\bm{z}) \Bigg|_{\substack{{\bs s} = {\bs 0} \\ {\bs z} = {\bs 1}}}
    = \ee_{t_0}\Big[ \prod_{i=1}^d (-1)^{n_{\lambda_i}}\lambda_i(t)^{n_{\lambda_i}} Q_i(t)^{[n_{Q_i}]}\Big],
\end{align}
i.e., an object composed from (standard) moments of $\lambda_i(t)$ and \textit{reduced} moments of $Q_i(t)$.
Note that in \eqref{eq: computation of moments by derivatives of JT} we have used the standard Pochhammer notation: for integers $m$ and $n$ we denote $m^{[n]} := m(m-1)\cdots(m-n +1 )$, by convention setting $m^{[0]} :=1$.

In the above, we focused on identifying transforms pertaining to a single point in time. 
Using similar methods, however, it is possible to derive the transform of $(\bm{Q}(t),\bm{\lambda}(t))$ and $(\bm{Q}(t+\tau),\bm{\lambda}(t+\tau))$ jointly, for some $\tau>0$.
More precisely, the following theorem (proven in Appendix~\ref{appendix: proofs}) identifies, for $\bm{y},\bm{z}\in[-1,1]^d$ and $\bm{r},\bm{s}\in\rr_+^d$, the object
\begin{align} \label{eq: def joint transform Q,lambda at t and t+tau}
    \zeta_\tau(t,\bm{r},\bm{y},\bm{s},\bm{z}) := \ee\Big[ \prod_{i=1}^d y_i^{Q_i(t)}e^{-r_i\lambda_i(t)} z_i^{Q_i(t+\tau)}e^{-s_i\lambda_i(t+\tau)}\Big],
\end{align}
where as before, $\ee[\cdot]$ is understood as $\ee_0[\cdot]$, with $\bm{Q}(0)=\bm{0}$ and $\bm{\lambda}(0)=\bm{\overline{\lambda}}$. 
In addition, ${\bs y}\odot{\bs z}$ is the component-wise product of the vectors ${\bs y}$ and ${\bs z}$. 

\begin{theorem} \label{thm: joint transform characterization t, t+tau}
Fix $t,\tau\in{\mathbb R}_+$, and let $\bm{Q}(0)= \bm{0}$, $\bm{\lambda}(0)=\bm{\overline{\lambda}}$. Then, for any  $\bm{y},\bm{z}\in[-1,1]^d$, $\bm{r},\bm{s} \in \rr_+^d$,
\begin{align}
\begin{split}
    \zeta_\tau(t,\bm{r},\bm{y},\bm{s},\bm{z}) &= \zeta(t, \bm{y}\odot\hat{\bm{z}}(t), \bm{r} + \tilde{\bm{s}}(t+\tau))
    \prod_{j=1}^d \exp\Big(-\overline{\lambda}_j \alpha_j \int_t^{t+\tau} \tilde{s}_j(u)\ddiff u\Big)\\
    &=\prod_{j=1}^d
    \exp\Big(-\overline{\lambda}_j\tilde{r}_j(t) - \overline{\lambda}_j\alpha_j \int_0^t \tilde{r}_j(v)\ddiff v -\overline{\lambda}_j \alpha_j \int_t^{t+\tau} \tilde{s}_j(u)\ddiff u\Big).
\end{split}
\end{align}
Here $\zeta(\cdot)$ is given by Eqn.~\eqref{eq: zeta characterization in terms of s(u)}, and, for $j\in[d]$, the functions $\hat{z}_j(\cdot)$, $\tilde{s}_j(\cdot)$ and $\tilde{r}_j(\cdot)$ satisfy
\begin{align} \label{eq: thm joint transform at t and t+tau equations for hatZ and tildeS}
\begin{split}
    &\hat{z}_j(u) = 1 + (z_j-1)e^{-\mu_j(t+\tau-u)}, \\
    &\frac{\mathrm{d} \tilde{s}_j(u)}{\mathrm{d} u} + \alpha_j \tilde{s}_j(u) + \big(1 + (z_j - 1)e^{-\mu_j(u-t)}\big) \beta_j(\tilde{\bm{s}}(u)) - 1 = 0,\\
    &\frac{\ddiff \tilde{r}_j(v)}{\ddiff v} + \alpha_j \tilde{r}_j(v) + \big(1 + (y_j-1)e^{-\mu_jv} + y_j(z_j-1)e^{-\mu_j(v+\tau)}\big)\beta(\tilde{\bm{r}}(v)) - 1 = 0,\\
\end{split}
\end{align}
with boundary condition $\tilde{s}_j(t) = s_j$ and $\tilde{r}_j(0) = r_j+\tilde{s}_j(t+\tau)$, and where $0\leqslant v \leqslant t$ and $t \leqslant u \leqslant t+\tau$.
\end{theorem}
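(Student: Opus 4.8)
The strategy is to reduce the two-time-point transform to the single-time-point transform $\zeta(\cdot)$ already characterized in Corollary~\ref{cor: zeta_0 characterization}, by conditioning on the state $(\bm{Q}(t),\bm{\lambda}(t))$ and exploiting the Markov property. First I would write, via the tower rule,
\begin{align*}
    \zeta_\tau(t,\bm{r},\bm{y},\bm{s},\bm{z})
    = \ee\Big[ \prod_{i=1}^d y_i^{Q_i(t)} e^{-r_i\lambda_i(t)}\,
    \ee\big[\textstyle\prod_{i=1}^d z_i^{Q_i(t+\tau)} e^{-s_i\lambda_i(t+\tau)} \,\big|\, \cF_t\big]\Big].
\end{align*}
By the Markov property of $(\bm{N}(\cdot),\bm{\lambda}(\cdot))$, the inner conditional expectation equals $\zeta_t(t+\tau,\bm{s},\bm{z})$ evaluated at the realized initial data $\bm{Q}(t)$, $\bm{\lambda}(t)$. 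Applying Theorem~\ref{thm: zeta characterization} with $t_0 = t$ and horizon $t+\tau$, this inner expectation becomes
\begin{align*}
    \prod_{j=1}^d \hat{z}_j(t)^{Q_j(t)} \exp\Big(-\tilde{s}_j(t)\lambda_j(t) - \overline{\lambda}_j \alpha_j \int_t^{t+\tau} \tilde{s}_j(u)\,\ddiff u\Big),
\end{align*}
where $\hat{z}_j(\cdot)$ and $\tilde{s}_j(\cdot)$ solve the second and first displayed equations of \eqref{eq: thm joint transform at t and t+tau equations for hatZ and tildeS} on $[t,t+\tau]$ with $\tilde{s}_j(t)=s_j$. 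Here the deterministic factor $\exp(-\overline{\lambda}_j\alpha_j\int_t^{t+\tau}\tilde{s}_j\,\ddiff u)$ pulls out of the outer expectation, yielding the product term in the first claimed equality.

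Next I would substitute this back into the outer expectation. The random factors regroup as $\prod_i (y_i\hat{z}_i(t))^{Q_i(t)} e^{-(r_i + \tilde{s}_i(t+\tau))\lambda_i(t)}$; note that $\hat z_j(t)=1+(z_j-1)e^{-\mu_j\tau}$ is a constant in the outer expectation, and the exponent of $\lambda_i(t)$ shifts from $r_i$ to $r_i+\tilde s_i(t+\tau)$ because the inner exponent $\tilde s_j(t)$, once transported to the boundary at horizon $t+\tau$ via the recursion, contributes exactly $\tilde s_j(t+\tau)$. Thus the outer expectation is precisely $\zeta(t,\bm{y}\odot\hat{\bm{z}}(t),\bm{r}+\tilde{\bm{s}}(t+\tau))$, giving the first equality. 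Expanding this via Corollary~\ref{cor: zeta_0 characterization}, with $\tilde{r}_j(\cdot)$ the solution of the single-time ODE driven by the composite transform argument $y_j\hat z_j(t)$ and boundary value $\tilde r_j(0)=r_j+\tilde s_j(t+\tau)$, produces the second equality; a direct check shows that substituting the exponent $z_j\mapsto y_j\hat z_j(t)=y_j(1+(z_j-1)e^{-\mu_j\tau})$ into \eqref{eq: thm statement zeta ODE for tilde_s} reproduces the third ODE in \eqref{eq: thm joint transform at t and t+tau equations for hatZ and tildeS}, after writing $\mu_j v$ and $\mu_j(v+\tau)$ for the two exponential terms.

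The main obstacle is the bookkeeping in the second step: verifying that the boundary data of the inner problem at time $t$ becomes exactly the argument $\bm{r}+\tilde{\bm{s}}(t+\tau)$ of the outer transform, and that the composite $\bm{z}$-argument collapses to $\bm{y}\odot\hat{\bm{z}}(t)$ as stated. This requires tracking how the decay factors $e^{-\mu_j(\cdot)}$ in the $\hat z_j$ and the coefficient of $\beta_j$ transform under the change of time origin from $t$ back to $0$; in particular one must confirm that the single factor $y_j(z_j-1)e^{-\mu_j(v+\tau)}$ in the third ODE correctly encodes both the outer $\bm{y}$-weighting and the transported inner $\bm{z}$-dependence. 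Once the two ODE families are shown to match—which is a routine substitution of variables—both claimed equalities follow from Corollary~\ref{cor: zeta_0 characterization}.
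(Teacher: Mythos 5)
Your proposal is correct and follows essentially the same route as the paper's proof: conditioning on $(\bm{Q}(t),\bm{\lambda}(t))$ via the tower property, applying Theorem~\ref{thm: zeta characterization} with $t_0=t$ to the inner expectation, regrouping to recognize $\zeta(t,\bm{y}\odot\hat{\bm{z}}(t),\bm{r}+\tilde{\bm{s}}(t+\tau))$, and then invoking Corollary~\ref{cor: zeta_0 characterization} with the substitution $z_j\mapsto y_j\hat z_j(t)$ to obtain the third ODE. The only blemish is the displayed inner expectation, where the exponent on $\lambda_j(t)$ should read $\tilde s_j(t+\tau)$ rather than $\tilde s_j(t)$ (since in Theorem~\ref{thm: zeta characterization} the function is evaluated at the terminal time); your subsequent prose uses the correct value, so this is a notational slip rather than a gap.
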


Observe that the result of Theorem~\ref{thm: joint transform characterization t, t+tau} can be extended to include arbitrarily many time points $t<t_1<t_2<\dots<t_k$, $k\in\nn$, by repeated conditioning and applying Eqn.~\eqref{eq: joint transform conditioned at t_0}.
Further, as in Eqn.~\eqref{eq: computation of moments by derivatives of JT}, we can obtain corresponding joint moments by differentiation.
This in particular allows us to compute the auto-correlation and auto-covariance functions of the multivariate Hawkes process and its associated population process.
More precisely, for any $t\geqslant  0$ and $\tau>0$, we can compute the auto-correlation function by
\begin{align*}
    R_{\bm{Q}}(t,\tau) = \ee\big[ \bm{Q}(t)\,\bm{Q}(t+\tau)^\top\big],\qquad
    R_{\bm{\lambda}}(t,\tau) = \ee\big[\bm{\lambda}(t)\, \bm{\lambda}(t+\tau)^\top\big],
\end{align*}
and the auto-covariance function by
\begin{align*}
    C_{\bm{Q}}(t,\tau) &= \ee\big[ \bm{Q}(t)\,\bm{Q}(t+\tau)^\top\big] - \ee\big[ \bm{Q}(t) \big]\ee\big[ \bm{Q}(t+\tau) \big]^\top, \\
    C_{\bm{\lambda}}(t,\tau) &= \ee\big[\bm{\lambda}(t)\, \bm{\lambda}(t+\tau)^\top\big] - \ee\big[ \bm{\lambda}(t) \big] \ee\big[ \bm{\lambda}(t+\tau) \big]^\top.
\end{align*}

\subsection{Joint moments} \label{sec: joint moments d-dim}
We proceed by exploiting the characterization of the joint transform $\zeta(t,\bm{s},\bm{z})$, as given in Theorem~\ref{thm: zeta characterization}, to derive a system of linear differential equations for the joint transient moments pertaining to $({\bs\lambda}(t),{\bs Q}(t))$, as well as a system of linear (algebraic) equations for the corresponding stationary moments pertaining to $({\bs\lambda},{\bs Q})$, where
\begin{align} \label{eq: def stationary Q and L}
\begin{split}
    \bm{\lambda} &= (\lambda_1,\dots,\lambda_d) := \lim_{t\to\infty} (\lambda_1(t),\dots,\lambda_d(t)),\\
     \bm{Q} &= (Q_1,\dots,Q_d) := \lim_{t\to\infty} (Q_1(t),\dots,Q_d(t)).
\end{split}
\end{align}
As mentioned, Assumption \ref{ass: stability condition} entails that $\bm{\lambda}$ exists, and if in addition all $\mu_i$ are positive, then $\bm{Q}$ is well defined as well.

We start by analyzing the joint transient moments via a system of linear differential equations. Let  $n_{\lambda_i}, n_{Q_i}\in\nn$ be such that  $\sum_{i=1}^d n_{\lambda_i} = n_\lambda$ and $\sum_{i=1}^d n_{Q_i} = n_Q$.
Consider, for any $t\in\rr_+$, with $\bm{n_\lambda} = (n_{\lambda_1},\dots,n_{\lambda_d})$, $\bm{n_Q} = (n_{Q_1},\dots,n_{Q_d})$, the object of our interest:
\begin{align} \label{eq: joint moments L_i Q_j}
    \varphi_t(\bm{n_{\lambda}},\bm{n_Q}) :=\ee\Big[ \prod_{i=1}^d \lambda_i(t)^{n_{\lambda_i}} Q_i(t)^{n_{Q_i}}\Big];
\end{align}
we call $\varphi_t(\bm{n_{\lambda}},\bm{n_Q})$ {\it joint transient moments of total order $n_\lambda$ and $n_Q$}. 
To make sure the objects that we consider are well-defined, we throughout assume that, for any $j\in[d]$,
\begin{align}
    \ee\Big[ \prod_{i=1}^d B_{ij}^{n_{\lambda_i}} \Big] < \infty.
\end{align}
By generalizing the approach of \cite{KSBM18} to the multivariate setting, we obtain a vector-valued ODE (ordinary differential equation) to derive the joint transient moments \eqref{eq: joint moments L_i Q_j}. 
In this derivation, a crucial role is played by an intermediate step in the proof of Theorem~\ref{thm: zeta characterization}, as given in Appendix~\ref{appendix: proofs}. 
In this intermediate step, summarized in Eqn.~\eqref{eq: PDE equation in terms of zeta}, the following PDE has been derived:
\begin{align}
\label{eq: PDE zeta rewritten}
    &\frac{\mathrm{d}}{\mathrm{d}t}\ee\Big[e^{-\bm{s}^\top \bm{\lambda}(t)} \prod_{n=1}^d z_n^{Q_n(t)} \Big]
    - \sum_{j=1}^d \big( \alpha_j s_j + z_j \beta_j(\bm{s}) - 1\big) \ee\big[ \lambda_j(t) e^{-\bm{s}^\top \bm{\lambda}(t)} \prod_{n=1}^d z_n^{Q_n(t)} \big]  \\
    &+ \sum_{j=1}^d\mu_j(z_j-1) \ee\big[ Q_j(t) e^{-\bm{s}^\top \bm{\lambda}(t)} \prod_{n=1}^d z_n^{Q_n(t)-\bm{1}_{\{n=j\}}} \big] = - \sum_{j=1}^d \alpha_j s_j \overline{\lambda}_j\ee\big[e^{-\bm{s}^\top \bm{\lambda}(t)} \prod_{n=1}^d z_n^{Q_n(t)} \big]\notag,
\end{align}
where we rewrote each of the terms appearing in Eqn.~\eqref{eq: PDE equation in terms of zeta} using the definition of $\zeta(t,\bm{s},\bm{z})$.
The next step is to repeatedly differentiate this PDE: differentiate $n_{\lambda_1}, \dots, n_{\lambda_d}$ times with respect to $s_1,\dots,s_d$, respectively and substitute $\bs{s}=\bm{0}$, and then differentiate $n_{Q_1},\dots,n_{Q_d}$ times with respect to $z_1,\dots,z_d$ respectively and substitute $\bs{z}=\bs{1}$.
To make our notation concise, we introduce the `reduced version' of $\varphi_t(\bm{n_Q},\bm{n_{\lambda}})$:
\begin{align}\label{eq: psi concise notation joint moments}
    \psi_t(\bm{n_{\lambda}},\bm{n_Q}) := \ee\Big[ \prod_{i=1}^d \lambda_i(t)^{n_{\lambda_i}} Q_i(t)^{[n_{Q_i}]}\Big],
\end{align}
where evidently  $\psi_t(\bm{n_Q}, \bm{n_{\lambda}})$ can be expressed in objects of the type $\varphi_t(\bm{n_Q},\bm{n_{\lambda}})$ as given in Eqn.~\eqref{eq: joint moments L_i Q_j}, and vice versa.
By elementary algebraic operations, we obtain, with $\sum_{=1}^d m_i=m$,
\begin{align} \label{eq: PDE diff wrt s and z}
    &\frac{\mathrm{d}}{\mathrm{d}t}\psi_t(\bm{n_{\lambda}},\bm{n_Q})
    + \sum_{j=1}^d \big(n_{\lambda_j} (\alpha_j -\ee\big[B_{jj}\big]) + n_{Q_j} \mu_j\big) \psi_t( \bm{n_{\lambda}},\bm{n_Q})\notag \\
    &= \sum_{j=1}^d \sum_{\substack{i=1 \\ i\neq j}}^d  n_{\lambda_i}   \ee\big[B_{ij}\big] \psi_t( \bm{n_{\lambda}} - \bm{e}_i + \bm{e}_j,\bm{n_Q}) + \sum_{j=1}^d n_{Q_j} \psi_t(\bm{n_{\lambda}}+\bm{e_j},\bm{n_Q} - \bm{e}_j)\\
    &\quad  + \sum_{j=1}^d \alpha_j \overline{\lambda}_jn_{\lambda_j}  \psi_t(  \bm{n_{\lambda}}-\bm{e}_j,\bm{n_Q}) + \sum_{i=1}^d \sum_{j=1}^d n_{\lambda_i} n_{Q_j} \ee\big[B_{ij}\big]  \psi_t( \bm{n_{\lambda}} - \bm{e}_i + \bm{e}_j,\bm{n_Q}-\bm{e}_j) \notag\\
    & \quad + \sum_{j=1}^d\sum_{m_1=0}^{n_{\lambda_1}} \cdots \sum_{m_d=0}^{n_{\lambda_d}} \bm{1}_{\{m\leqslant n_\lambda -2\}}   \prod_{k=1}^d {n_{\lambda_k}\choose m_k}\Big\{ n_{Q_j} \prod_{i=1}^d\ee\big[B_{ij}^{n_{\lambda_i} - m_i}\big]
    \psi_t( \bm{m}+\bm{e}_j,\bm{n_Q}-\bm{e}_j)\notag \\
    &\quad \quad +\:\prod_{i=1}^d \ee\big[B_{ij}^{n_{\lambda_i} - m_i}\big] \psi_t( \bm{m}+\bm{e}_j,\bm{n_Q})\Big\}\notag;
\end{align}
see Appendix~\ref{appendix: joint moment compuations} for details.
The key observation is that Eqn.~\eqref{eq: PDE diff wrt s and z} provides us with a relation involving joint (reduced) transient moments of $(\bm{\lambda}(t),\bm{Q}(t))$ of total order $n_\lambda$ and $n_Q$, expressed in terms of their counterparts of at most the same total order, so that the system can be solved.
The resulting linear vector-valued ODE thus enables the computation of the joint transient moments of total order $n_\lambda$ and $n_Q$, thus generalizing Eqn.~(3.9) in \cite{KSBM18}.

After having dealt with the joint transient moments, we now focus on the stationary counterpart of $\psi_t( \bm{n_\lambda},\bm{n_Q})$, i.e., the {\it joint reduced stationary moments of total order $n_\lambda$ and~$n_Q$}:
\begin{align} \label{eq: psi concise notation joint moments stationary}
    \psi( \bm{n_\lambda},\bm{n_Q}) := \lim_{t\to\infty} \psi_t( \bm{n_\lambda},\bm{n_Q}) 
    = \ee\Big[ \prod_{i=1}^d \lambda_i^{n_{\lambda_i}} Q_i^{[n_{Q_i}]} \Big].
\end{align}
In this case, we obtain a system of algebraic equations, to be interpreted as the stationary version of Eqn.~\eqref{eq: PDE diff wrt s and z}. Indeed, it involves joint (reduced) stationary moments of total order $n_\lambda$ and $n_Q$, expressed in terms of their counterparts of at most the same total order:
\begin{align} \label{eq: PDE diff wrt s and z in steady-state}
    &\sum_{j=1}^d \big(n_{\lambda_j} (\alpha_j -\ee\big[B_{jj}\big]) + n_{Q_j} \mu_j\big)\,\psi(\bm{n_{\lambda}},\bm{n_Q} )  \notag \\
    &= \sum_{j=1}^d \sum_{\substack{i=1 \\ i\neq j}}^d  n_{\lambda_i}   \ee\big[B_{ij}\big] \psi( \bm{n_{\lambda}} - \bm{e}_i + \bm{e}_j,\bm{n_Q}) + \sum_{j=1}^d n_{Q_j} \psi( \bm{n_{\lambda}}+\bm{e_j},\bm{n_Q} - \bm{e}_j)\\
    &\quad  + \sum_{j=1}^d \alpha_j \overline{\lambda}_jn_{\lambda_j}  \psi( \bm{n_{\lambda}} -\bm{e}_j,\bm{n_Q}) + \sum_{i=1}^d \sum_{j=1}^d n_{\lambda_i} n_{Q_j} \ee\big[B_{ij}\big]  \psi( \bm{n_{\lambda}} - \bm{e}_i + \bm{e}_j,\bm{n_Q}-\bm{e}_j) \notag\\
    & \quad + \sum_{j=1}^d\sum_{m_1=0}^{n_{\lambda_1}} \cdots \sum_{m_d=0}^{n_{\lambda_d}} \bm{1}_{\{m\leqslant n_\lambda -2\}}   \prod_{k=1}^d {n_{\lambda_k}\choose m_k}\Big\{ n_{Q_j} \prod_{i=1}^d\ee\big[B_{ij}^{n_{\lambda_i} - m_i}\big]
    \psi( \bm{m}+\bm{e}_j,\bm{n_Q}-\bm{e}_j)\notag \\
    &\quad \quad  
    + \prod_{i=1}^d \ee\big[B_{ij}^{n_{\lambda_i} - m_i}\big] \psi( \bm{m}+\bm{e}_j,\bm{n_Q})\Big\}\notag.
\end{align}

In Appendix \ref{appendix: trans_stat_moments} we present an illustration concerning moments of order $n\in\{1,2\}$.

\section{Recursive Procedure: Bivariate Setting}\label{sec: recursive}

In this section, we specifically consider the bivariate setting ($d=2$), and focus on the structure behind the joint moments of arbitrary order $n\in\nn$.
The presented method can be extended to higher dimensions $d\in\nn$, at the cost of heavier notation and more intricate objects.
As such, this section serves as a proof of principle on how the underlying recursive structure can be exploited.
We construct, based on the results obtained in the previous sections, a recursive procedure to compute the joint transient moments $\psi_t(\bm{n_\lambda}, \bm{n_Q})$ as well as the joint stationary moments $\psi(\bm{n_\lambda}, \bm{n_Q})$.
To make the analysis as transparent as possible, we express the main objects in vector/matrix-form.
As it turns out, there is a strong similarity between the structure of the algorithm to evaluate the transient moments on one hand, and its counterpart for the stationary moments on the other hand.
In the sequel we let $n$ be the \textit{total order} of the joint moments, i.e., $n = n_Q + n_\lambda  =  n_{Q_1} + n_{Q_2} + n_{\lambda_1} + n_{\lambda_2}.$
We first rewrite the coupled equations~\eqref{eq: PDE diff wrt s and z} (transient case) and \eqref{eq: PDE diff wrt s and z in steady-state} (stationary case) in vector-matrix form.
We then use these to set up a procedure to compute the corresponding moments.

\subsection{Transient moments} \label{sec: recursive transient moments 2-dim}

We construct a recursive procedure to compute transient joint moments $\psi_t(\bm{n_\lambda}, \bm{n_Q})$ by introducing properly defined vector- and matrix-valued objects, such that we can exploit the ODE in Eqn.~\eqref{eq: PDE diff wrt s and z}.
As we have $d=2$, the objective is to compute
\begin{align*}
    \psi_t((n_{Q_1},n_{Q_2}), (n_{\lambda_1},n_{\lambda_2})) = \ee\Big[ \lambda_1(t)^{n_{\lambda_1}}\lambda_2(t)^{n_{\lambda_2}} Q_1(t)^{[n_{Q_1}]}Q_2(t)^{[n_{Q_2}]}\Big].
\end{align*}
The multivariate setting has the intrinsic complication that the number of combinations of possible joint moments increases rapidly in $n$ and $d$; already in this bivariate setting, there are many possible combinations of joint moments of order $n$.
To collect all joint moments $\psi_t((n_{Q_1},n_{Q_2}), (n_{\lambda_1},n_{\lambda_2}))$ in a single vector, we need to specify an ordering of the different moments.
To that end, we introduce the \textit{stacked} vector
\begin{align} \label{eq: definition stacked vector Psi}
    \bm{\Psi}^{(n)}_t := \Big( \bm{\Psi}_t^{(0,n)}, \bm{\Psi}_t^{(1,n-1)}, \dots,  \bm{\Psi}_t^{(n,0)}\Big)^\top,
\end{align}
where, for each $k\in\{0,1,\dots,n\}$, the vector $\bm{\Psi}^{(k,n-k)}_t$ exhaustively contains all combinations of joint moments such that $n_{Q_1} + n_{Q_2} = k$ and $n_{\lambda_1} + n_{\lambda_2} = n-k$.
For instance, for $k=0$ and $k=n$ we respectively have
\begin{align*}
    \bm{\Psi}^{(0,n)}_t
    &= \Big( \psi_t(\bm{0}, (n,0)), \psi_t(\bm{0},(n-1,1)),\dots,\psi_t(\bm{0}, (0,n))\Big)^\top \\
    &= \Big( \ee\big[\lambda_1(t)^n\big], \ee\big[\lambda_1(t)^{n-1}\lambda_2(t)\big],\dots,\ee\big[\lambda_2(t)^n\big] \Big)^\top;\\
    \bm{\Psi}^{(n,0)}_t
    &= \Big( \psi_t((n,0), \bm{0}), \psi_t((n-1,1), \bm{0}),\dots,\psi_t((0,n), \bm{0})\Big)^\top \\
    &=\Big(\ee\big[Q_1(t)^{[n]}\big], \ee\big[Q_1(t)^{[n-1]}Q_2(t)\big],\dots, \ee\big[Q_2(t)^{[n]}\big]\Big)^\top.
\end{align*}
The cases corresponding with  $k\in\{1,\dots,n-1\}$ are notationally considerably more burdensome since one has to include all possible combinations of order $k$ as well as $n-k$. 

For concrete examples of the stacked vector in~\eqref{eq: definition stacked vector Psi} for orders $n=1,2,3$, see Appendix~\ref{appendix: bivariate recursive procedure}.
It is readily verified that for general $d$ the dimension of $\bm{\Psi}^{(n)}_t$ equals
\begin{align} \label{eq: size psi vector general d}
     \mathfrak{D}(d,n):=\sum_{k=1}^n {2d \choose k} {n-1 \choose k-1},
\end{align}
where the $2d$ is due to the fact that we include moments of both $\bm{Q}(t)$ and $\bm{\lambda}(t)$.
In the $d=2$ case considered in this section, we thus have that the size of $\bm{\Psi}^{(n)}_t$ is $\mathfrak{D}(2,n)$.

By Eqn.~\eqref{eq: PDE diff wrt s and z}, the stacked vector $\bm{\Psi}^{(n)}_t$  satisfies a vector-valued ODE:
\begin{align} \label{eq: ODE stacked vector Psi}
    \frac{\ddiff}{\ddiff t}\bm{\Psi}^{(n)}_t 
    = \bm{M} \bm{\Psi}^{(n)}_t +\bm{L}\big(\bm{\Psi}^{(1)}_t,\dots, \bm{\Psi}^{(n-1)}_t\big)^\top,
\end{align}
for certain matrices $\bm{M}$ and $\bm{L}$ of appropriate dimension. Here, the matrix ${\bs M}$ is of dimension $\mathfrak{D}(2,n)\times\mathfrak{D}(2,n)$, and $\bm{L}$ of dimension $\mathfrak{D}(2,n)\times\overline{\mathfrak{D}}(2,n)$,
where
\begin{align}
    \overline{\mathfrak{D}}(2,n):=\sum_{m=1}^{n-1}{\mathfrak{D}}(2,m).
\end{align}
As a next step, we identify blocks of $\bm{M}$ that correspond to subvectors $\bm{\Psi}^{(k,n-k)}_t$ of the stacked vector $\bm{\Psi}^{(n)}_t$.
Upon inspecting \eqref{eq: PDE diff wrt s and z} we observe that, when considering in \eqref{eq: ODE stacked vector Psi} the differential equations that correspond to $\frac {\ddiff}{\ddiff t} \bm{\Psi}^{(k,n-k)}_t$, in the right-hand side only $\bm{\Psi}^{(k,n-k)}_t$ and $\bm{\Psi}^{(k-1,n-k+1)}_t$ appear, besides a linear combination of objects of lower total order (i.e., $\bm{\Psi}^{(1)}_t,\ldots,\bm{\Psi}^{(n-1)}_t$). 
As a consequence, we can write
\begin{align}\label{eq: ODE indiv vectors Psi}
\begin{split}
    \frac{\ddiff}{\ddiff t} \bm{\Psi}^{(k,n-k)}_t 
    &= \bm{M}^{(k,n-k)} \bm{\Psi}^{(k,n-k)}_t + \bm{K}^{(k,n-k)}\bm{\Psi}^{(k-1,n-k+1)}_t+\bm{L}^{(k,n-k)}\big(\bm{\Psi}^{(1)}_t,\dots, \bm{\Psi}^{(n-1)}_t\big)^\top,
\end{split}
\end{align}
for appropriately chosen matrices $\bm{M}^{(k,n-k)}$, $\bm{K}^{(k,n-k)}$, and $\bm{L}^{(k,n-k)}$, where we set $\bm{K}^{(k,n-k)} \equiv 0$ when $k=0$.
Eqn.~\eqref{eq: ODE indiv vectors Psi} thus reveals a recursive procedure to compute $\bm{\Psi}^{(n)}_t$, where in the $n$-th iteration a non-homogeneous linear system of ODEs has to be solved, with $\bm{\Psi}^{(1)}_t,\dots, \bm{\Psi}^{(n-1)}_t$, as derived in the previous steps, appearing in the non-homogeneous part.

We proceed by introducing the notation needed to set up the recursive procedure.
A \textit{tridiagonal} matrix in $\rr^{n\times n}$ is a matrix with elements on the main diagonal, the first diagonal above and below the main diagonal only, for which we use the notation, with ${\bs a}, {\bs c}\in{\mathbb R}^{n-1}$ and ${\bs d}\in{\mathbb R}^n$, 
\begin{align*}
    \tridiag{\bm{a}}{\bm{d}}{\bm{c}}
    :=
    \begin{bmatrix}
    d_1 & c_1 & 0 & \cdots & 0 \\
    a_1 & d_2 & c_2 & \cdots & 0 \\
    \vdots & \ddots & \ddots & \ddots & \vdots \\
    0 & 0 & a_{n-2} & d_{n-1} & c_{n-1} \\
    0 & 0 & 0 & a_{n-1} & d_n
    \end{bmatrix}.
\end{align*}
Given the vectors $\bm{n_Q} = (n_{Q_1},n_{Q_2})$ and  $\bm{n_\lambda}= (n_{\lambda_1},n_{\lambda_2})$, we set
\begin{align*}
    v(\bm{n_Q}, \bm{n_\lambda}) &:= -n_{\lambda_1} \overline{\alpha}_1 - n_{Q_1} \mu_1  - n_{\lambda_2} \overline{\alpha}_2 - n_{Q_2} \mu_2, \\
    \bm{w}(\bm{n_Q},n) &:= \big( v(\bm{n_Q},(n,0)), v(\bm{n_Q}, (n-1,1)), \dots, v(\bm{n_Q},(1,n-1)), v(\bm{n_Q},(0,n))\big)^\top,
\end{align*}
where $\overline{\alpha}_i := \alpha_i -\ee[B_{ii}]$ for $i=1,2$, corresponding to the left-hand side of Eqn.~\eqref{eq: PDE diff wrt s and z}. 
With the vectors
$\bm{1}_{(n)} := (1,2,\dots,n)$ and $\bm{1}^{(n)} := (n,n-1,\dots,1)$,
and with $e_j$ the unit vector with $1$ on the $j-$th component,
we finally define the matrix, for each $k\in\{0,1,\dots,n-1,n\}$,
\begin{align} \label{eq: def tridiag matrix M}
    \bm{M}^{(k,n-k)} := \bigoplus_{m=0}^k \tridiag{\bm{1}_{(n-k)}\ee[B_{21}]}{\bm{w}\big((n-m)\bm{e}_1 +m\bm{e}_2,n-k\big)}{\bm{1}^{(n-k)}\ee[B_{12}]},
\end{align}
where $\bigoplus$ denotes the direct sum for matrices.
For concrete examples of the matrix $\bm{M}^{(k,n-k)}$ and how it appears in the ODE in~\eqref{eq: ODE indiv vectors Psi}, see Appendix~\ref{appendix: explicit bivariate}.
We can now present our algorithm to compute the transient moments.

\begin{algorithm}
\label{alg: recursive procedure ODE blocks}
Fix $n\in{\mathbb N}$, and suppose we know  $\bm{\Psi}^{(1)}_t,\dots, \bm{\Psi}^{(n-1)}_t$.
Then $\bm{\Psi}^{(n)}_t$ can be found from  Eqn.~\eqref{eq: ODE stacked vector Psi}, as follows:

\begin{enumerate}
\item[Step]0: The vector $\bm{\Psi}^{(0,n)}_t$ satisfies the vector-valued ODE
\begin{align}
    \frac{\ddiff}{\ddiff t} \bm{\Psi}^{(0,n)}_t 
    &= \bm{M}^{(0,n)} \bm{\Psi}^{(0,n)}_t  + \bm{L}^{(0,n)}\big(\bm{\Psi}^{(1)}_t,\dots, \bm{\Psi}^{(n-1)}_t\big),
\end{align}
where $\bm{L}^{(0,n)}$ follows from Eqn.~\eqref{eq: PDE diff wrt s and z}, with initial condition $\bm{\Psi}^{(0,n)}_0$ determined by $\bm{Q}(0)$ and $\bm{\lambda}(0)$. 

\item[Step]$k$: For any $k\in\{1,2,\dots,n-1,n\}$, $\bm{\Psi}^{(k,n-k)}_t$ satisfies the vector-valued ODE
\begin{align} \label{eq: ODE transient step m, 2-dim}
    \frac{\ddiff}{\ddiff t} \bm{\Psi}^{(k,n-k)}_t 
    &=  \bm{M}^{(k,n-k)}\bm{\Psi}^{(k,n-k)}_t + \bm{K}^{(k,n-k)}\bm{\Psi}^{(k-1,n-k+1)}_t  \\
    &\quad\quad + \bm{L}^{(k,n-k)}\big(\bm{\Psi}^{(1)}_t,\dots, \bm{\Psi}^{(n-1)}_t\big), \notag
\end{align}
where $\bm{K}^{(k,n-k)}$ and $\bm{L}^{(k,n-k)}$ follow from Eqn.~\eqref{eq: PDE diff wrt s and z}, with initial condition $\bm{\Psi}^{(k,n-k)}_0$.
\end{enumerate}
\end{algorithm}

Clearly, Eqn.~\eqref{eq: PDE diff wrt s and z} uniquely defines the matrices $\bm{K}^{(k,n-k)}$ and $\bm{L}^{(k,n-k)}$ needed in the above algorithm.
However, their explicit definition would require objects that are even more notationally involved. 
In Appendix~\ref{appendix: explicit bivariate} we show that for moments of orders $n=1$ and $n=2$, we can still explicitly write down the matrix for the stacked vector ODE: we combine the blocks of matrices into $4\times4$ and $10\times10$ matrices $\bm{M}$, respectively, and also construct the corresponding matrix $\bm{L}$.
However, for order $n=3,4,\ldots$, we would need very large matrices which are cumbersome to write down explicitly.

Due to the direct sum structure of $\bm{M}^{(k,n-k)}$, it consists of blocks. 
This allows us to decompose the $k$-th step in the algorithm into smaller steps, by considering the parts of the vector $\bm{\Psi}_t^{(k,n-k)}$ associated with the individual blocks of the matrix.
The solution to the ODEs in Algorithm~\ref{alg: recursive procedure ODE blocks} can be given in terms of matrix exponentials, as follows. This result follows by observing that computing $\frac{\ddiff}{\ddiff t} \bm{\Psi}_t^{(k,n-k)}$ in Eqn.~\eqref{eq: general ODE solution transient moments order n} immediately yields \eqref{eq: ODE transient step m, 2-dim} by inspection and Leibniz' integral rule.

\begin{proposition} \label{prop: solution vector ODE with matrix exponential}
For fixed $t\in\rr_+$, $n\in\nn$ and $k=0,1,\dots,n$, the solution for the vector-valued ODE for $\bm{\Psi}_t^{(k,n-k)}$ in Eqn.~\eqref{eq: ODE transient step m, 2-dim} is given by
\begin{align} \label{eq: general ODE solution transient moments order n}
    \bm{\Psi}_t^{(k,n-k)} &= e^{t\bm{M}^{(k,n-k)}} \bm{\Psi}_0^{(k,n-k)} \\
    &\quad + \int_0^t e^{(t-s)\bm{M}^{(k,n-k)}}\Big(\bm{K}^{(k,n-k)}\bm{\Psi}^{(k-1,n-k+1)}_s  + \bm{L}^{(k,n-k)}\big(\bm{\Psi}^{(1)}_s,\dots, \bm{\Psi}^{(n-1)}_s\big)\Big)\,\ddiff s. \notag
\end{align}
\end{proposition}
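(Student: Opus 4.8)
The plan is to recognize Eqn.~\eqref{eq: ODE transient step m, 2-dim} as a first-order linear non-homogeneous vector ODE with \emph{constant} coefficient matrix $\bm{A} := \bm{M}^{(k,n-k)}$ and forcing term
\begin{align*}
    \bm{f}(t) := \bm{K}^{(k,n-k)}\bm{\Psi}^{(k-1,n-k+1)}_t + \bm{L}^{(k,n-k)}\big(\bm{\Psi}^{(1)}_t,\dots, \bm{\Psi}^{(n-1)}_t\big),
\end{align*}
so that the equation reads $\ddiff \bm{\Psi}^{(k,n-k)}_t/\ddiff t = \bm{A}\,\bm{\Psi}^{(k,n-k)}_t + \bm{f}(t)$. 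The claimed formula~\eqref{eq: general ODE solution transient moments order n} is precisely the variation-of-parameters (Duhamel) representation, and since the coefficient matrix is constant I would establish the proposition by direct verification rather than by re-deriving the formula from scratch.

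First I would record that, within the recursive scheme, $\bm{f}(\cdot)$ is a \emph{known} quantity at step $k$: the subvector $\bm{\Psi}^{(k-1,n-k+1)}_\cdot$ was produced in the previous substep, and the lower-order stacked vectors $\bm{\Psi}^{(1)}_\cdot,\dots,\bm{\Psi}^{(n-1)}_\cdot$ in the previous iterations of Algorithm~\ref{alg: recursive procedure ODE blocks}; each of these solves a linear ODE of the same type and is therefore continuous (indeed smooth) in its time argument. Continuity of $\bm{f}$ is all that is needed to make the integral in~\eqref{eq: general ODE solution transient moments order n} well defined and to license the differentiation below. (For $k=0$ one has $\bm{K}^{(0,n)}\equiv 0$, so the first summand of $\bm{f}$ simply drops out, consistent with Step~0 of the algorithm.)

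Next I would differentiate the right-hand side of~\eqref{eq: general ODE solution transient moments order n} in $t$. The first term yields $\bm{A}\,e^{t\bm{A}}\bm{\Psi}_0^{(k,n-k)}$, using that $\bm{A}$ commutes with $e^{t\bm{A}}$. For the integral I would apply Leibniz' rule to $\int_0^t \bm{g}(t,s)\,\ddiff s$ with $\bm{g}(t,s) = e^{(t-s)\bm{A}}\bm{f}(s)$: the boundary contribution is $\bm{g}(t,t) = \bm{f}(t)$, and the interior contribution is $\int_0^t \bm{A}\,e^{(t-s)\bm{A}}\bm{f}(s)\,\ddiff s = \bm{A}\int_0^t e^{(t-s)\bm{A}}\bm{f}(s)\,\ddiff s$. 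Adding the two pieces gives
\begin{align*}
    \frac{\ddiff}{\ddiff t}\bm{\Psi}_t^{(k,n-k)} = \bm{A}\Big(e^{t\bm{A}}\bm{\Psi}_0^{(k,n-k)} + \int_0^t e^{(t-s)\bm{A}}\bm{f}(s)\,\ddiff s\Big) + \bm{f}(t) = \bm{A}\,\bm{\Psi}_t^{(k,n-k)} + \bm{f}(t),
\end{align*}
which is exactly Eqn.~\eqref{eq: ODE transient step m, 2-dim}. Evaluating~\eqref{eq: general ODE solution transient moments order n} at $t=0$ collapses the integral to zero and reduces the matrix exponential to the identity, leaving the prescribed value $\bm{\Psi}_0^{(k,n-k)}$, so the initial condition holds.

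Finally I would invoke uniqueness: because the map $(t,\bm{x}) \mapsto \bm{A}\bm{x} + \bm{f}(t)$ is globally Lipschitz in $\bm{x}$ with $\bm{f}$ continuous, the Picard--Lindel\"of theorem guarantees a unique solution of~\eqref{eq: ODE transient step m, 2-dim} with the given initial value, so the verified expression is \emph{the} solution. I do not anticipate a genuine obstacle, as the statement is a structural reformulation of the standard solution of a constant-coefficient linear system; the only points deserving a word of care are the applicability of Leibniz' rule, which rests on the continuity of $\bm{f}$ established above, and the commutativity of $\bm{A}$ with its own exponential, which is what permits pulling $\bm{A}$ outside the integral.
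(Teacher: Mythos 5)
Your proposal is correct and follows essentially the same route as the paper, which likewise justifies the formula by differentiating the right-hand side of Eqn.~\eqref{eq: general ODE solution transient moments order n} and recovering Eqn.~\eqref{eq: ODE transient step m, 2-dim} via Leibniz' integral rule. The additional points you supply --- continuity of the forcing term, verification of the initial condition, and uniqueness via Picard--Lindel\"of --- are standard details the paper leaves implicit.
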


\subsection{Stationary moments} \label{sec: recursive stationary moments 2-dim}

In this subsection, we focus on, with $n = n_Q +  n_\lambda$,
\begin{align}
    \psi((n_{Q_1},n_{Q_2}),(n_{\lambda_1},n_{\lambda_2})) = 
    \ee\Big[ \lambda_1^{n_{\lambda_1}}\lambda_2^{n_{\lambda_2}} Q_1^{[n_{Q_1}]}Q_2^{[n_{Q_2}]} \Big].
\end{align}
By exploiting Eqn.~\eqref{eq: PDE diff wrt s and z in steady-state}, we develop a recursive procedure similar to the one for the transient moments. The central object of study is 
\begin{align}
    \bm{\Psi}^{(n)} :=\lim_{t\to\infty} \bm{\Psi}^{(n)}_t = \Big( \bm{\Psi}^{(0,n)}, \bm{\Psi}^{(1,n-1)}, \dots, \bm{\Psi}^{(n,0)}\Big)^\top.
\end{align}
For the following recursive procedure, we use the notation introduced in Section \ref{sec: recursive transient moments 2-dim}; recall in particular the matrices defined in Eqn.~\eqref{eq: def tridiag matrix M}. 
Observe the strong similarity between the Algorithms \ref{alg: recursive procedure ODE blocks} and \ref{alg: recursive procedure stationary blocks}, in the sense that the underlying recursive structures fully match.

\begin{algorithm} \label{alg: recursive procedure stationary blocks}
Fix $n\in{\mathbb N}$, and suppose we know  $\bm{\Psi}^{(1)},\dots, \bm{\Psi}^{(n-1)}$.
Then $\bm{\Psi}^{(n)}$ can be computed as follows:

\begin{enumerate}
\item[Step]0: The vector $\bm{\Psi}^{(0,n)}$ satisfies the linear equation
\begin{align}
    0 = \bm{M}^{(0,n)}\,\bm{\Psi}^{(0,n)} + \bm{L}^{(0,n)}\big(\bm{\Psi}^{(1)},\dots, \bm{\Psi}^{(n-1)}\big)^\top,
\end{align}
where $\bm{L}^{(0,n)}$ follows from Eqn.~\eqref{eq: PDE diff wrt s and z in steady-state}.

\item[Step]$k$: For any $k\in\{1,2,\dots,n-1,n\}$, $\bm{\Psi}^{(k,n-k)}$ satisfies the linear equations
\begin{align}\label{eq: linear equation stationary step m, 2-dim}
\begin{split}
    0
    =\:&  \bm{M}^{(k,n-k)}\,\bm{\Psi}^{(k,n-k)} +  \bm{K}^{(k,n-k)}\bm{\Psi}^{(k-1,n-k+1)} +\bm{L}^{(k,n-k)}\big(\bm{\Psi}^{(1)},\dots, \bm{\Psi}^{(n-1)}\big)^\top.
    \end{split}
\end{align}
where $\bm{K}^{(k,n-k)}$ and $\bm{L}^{(k,n-k)}$ follow from Eqn.~\eqref{eq: PDE diff wrt s and z in steady-state}.
\end{enumerate}
\end{algorithm}

The solution to the linear equations in Algorithm~\ref{alg: recursive procedure stationary blocks} is given in the following proposition. Its proof follows immediately from solving Eqn.~\eqref{eq: linear equation stationary step m, 2-dim}. 

\begin{proposition}
For $n\in\nn$ and $k=0,1,\dots,n-1,n$, the solution for the linear equation for $\bm{\Psi}^{(k,n-k)}$ in Eqn.~\eqref{eq: linear equation stationary step m, 2-dim} is given by
\begin{align}
    \bm{\Psi}^{(k,n-k)}&=  -\Big(\bm{M}^{(k,n-k)}\Big)^{-1}\Big\{ \bm{K}^{(k,n-k)}\bm{\Psi}^{(k-1,n-k+1)} \:+\bm{L}^{(k,n-k)}\big(\bm{\Psi}^{(1)},\dots, \bm{\Psi}^{(n-1)}\big)^\top\Big\}, \notag
\end{align}
\end{proposition}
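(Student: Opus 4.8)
The plan is to read Eqn.~\eqref{eq: linear equation stationary step m, 2-dim} as a linear system whose only unknown is the block $\bm{\Psi}^{(k,n-k)}$: within the recursion both $\bm{\Psi}^{(k-1,n-k+1)}$ (computed in the preceding sub-step) and $\bm{\Psi}^{(1)},\dots,\bm{\Psi}^{(n-1)}$ (computed in the preceding iterations) are already available, so nothing else is free. First I would isolate the unknown,
\begin{align*}
    \bm{M}^{(k,n-k)}\bm{\Psi}^{(k,n-k)} = -\Big\{ \bm{K}^{(k,n-k)}\bm{\Psi}^{(k-1,n-k+1)} + \bm{L}^{(k,n-k)}\big(\bm{\Psi}^{(1)},\dots, \bm{\Psi}^{(n-1)}\big)^\top\Big\},
\end{align*}
and then left-multiply by $\big(\bm{M}^{(k,n-k)}\big)^{-1}$ to recover the stated formula. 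The algebra is immediate; the entire content of the proposition is the existence of this inverse.

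Invertibility of $\bm{M}^{(k,n-k)}$ is therefore the main obstacle. By the direct-sum structure in \eqref{eq: def tridiag matrix M}, $\bm{M}^{(k,n-k)}$ is block diagonal with tridiagonal blocks, so it suffices to treat one block. Each block has strictly negative diagonal entries $v(\cdot)$ — since $\overline{\alpha}_i = \alpha_i - \ee[B_{ii}] > 0$ under Assumption~\ref{ass: stability condition} and $\mu_i \geqslant 0$ — and nonnegative off-diagonals $\bm{1}_{(n-k)}\ee[B_{21}]$ and $\bm{1}^{(n-k)}\ee[B_{12}]$; in particular each block is a Metzler matrix. My target is to show $\bm{M}^{(k,n-k)}$ is Hurwitz, i.e.\ all its eigenvalues lie in the open left half-plane, which a fortiori excludes $0$ as an eigenvalue.

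To obtain the Hurwitz property I would combine two ingredients. Since each block is Metzler, Perron--Frobenius theory for essentially nonnegative matrices guarantees that its eigenvalue of largest real part is real; hence the block is Hurwitz precisely when this dominant real eigenvalue is strictly negative. By Proposition~\ref{prop: solution vector ODE with matrix exponential} the transient block solves \eqref{eq: ODE transient step m, 2-dim} with the matrix-exponential representation \eqref{eq: general ODE solution transient moments order n}, and under Assumption~\ref{ass: stability condition} the moments converge, as $t\to\infty$, to finite limits that do not depend on the initial data $\bm{\Psi}_0^{(k,n-k)}$ (cf.\ the discussion following Eqn.~\eqref{eq: def stationary Q and L}). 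A nonnegative dominant eigenvalue would contradict this: a strictly positive one forces unbounded growth, while a zero one makes $\lim_{t\to\infty} e^{t\bm{M}^{(k,n-k)}}\bm{\Psi}_0^{(k,n-k)}$ depend on $\bm{\Psi}_0^{(k,n-k)}$. Hence the dominant eigenvalue is strictly negative, $\bm{M}^{(k,n-k)}$ is nonsingular, and inverting it yields the claim. As a consistency check, letting $t\to\infty$ in \eqref{eq: ODE transient step m, 2-dim}, where the left-hand derivative vanishes and each forcing term converges to its stationary counterpart, reproduces exactly \eqref{eq: linear equation stationary step m, 2-dim}.

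A purely algebraic alternative would be to prove directly that $-\bm{M}^{(k,n-k)}$ is a nonsingular M-matrix, treating the tridiagonal blocks as transposed sub-generators of an associated finite-state process. I expect this route to be more delicate, because the relevant stability is governed by the spectral-radius condition $\rho(\bm{H})<1$ rather than by entrywise diagonal dominance; for this reason I would favor the transient-limit argument above, which ties invertibility cleanly to the already-established existence of the stationary moments.
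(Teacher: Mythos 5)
Your proposal is correct and, in its core step, coincides with the paper's proof, which consists of the single observation that Eqn.~\eqref{eq: linear equation stationary step m, 2-dim} is a linear system in the unknown block $\bm{\Psi}^{(k,n-k)}$ (all other blocks being available from earlier sub-steps and iterations) and is solved by isolating that block and left-multiplying by $\big(\bm{M}^{(k,n-k)}\big)^{-1}$. Where you go beyond the paper is in supplying an argument for the invertibility of $\bm{M}^{(k,n-k)}$, which the paper simply takes for granted (note that, unlike Propositions~\ref{prop: stacked n-dim lambda solution} and \ref{prop: stack of stacked Psi Q lambda solution}, this proposition is not even stated with an explicit invertibility hypothesis). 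Your Metzler/Perron--Frobenius route is sound: Assumption~\ref{ass: stability condition} does force $\overline{\alpha}_i=\alpha_i-\ee[B_{ii}]>0$ (since $\rho(\bm{H})\geqslant h_{ii}=\ee[B_{ii}]/\alpha_i$), the off-diagonal entries are nonnegative, and for an essentially nonnegative matrix the rightmost eigenvalue is real, so Hurwitz stability is equivalent to that eigenvalue being negative, which you then infer from convergence of the transient moments to initial-condition-independent limits. Two caveats: first, for the pure-$Q$ blocks ($k=n$, where $\bm{M}^{(n,0)}$ is diagonal with entries $-n_{Q_1}\mu_1-n_{Q_2}\mu_2$) strict negativity of the diagonal requires $\mu_i>0$, not merely $\mu_i\geqslant 0$ as you write; this is consistent with the paper's remark that stationary moments of $\bm{Q}$ only exist when all $\mu_i$ are positive, but you should state it. Second, your argument leans on the convergence of $\bm{\Psi}^{(k,n-k)}_t$ to a finite limit independent of the initial data, which the paper asserts (following Eqn.~\eqref{eq: def stationary Q and L}) rather than proves; your proof is therefore conditional on that assertion, which puts it at the same level of rigor as, but no lower than, the paper's own treatment.
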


\section{Nested Block Matrices: Bivariate Setting} \label{sec: nested block matrices}

In this section, we investigate the nested structure of the matrices associated with the ODEs of the moments more thoroughly, again in the bivariate setting $d=2$ (but, as before, extension to higher $d$ is in principle possible), to develop an efficient computational method for evaluating the transient and stationary moments.
It turns out that one can find a nested sequence of well-behaved matrices that describe the relations between the moments, which facilitates a substantial reduction of the required computational effort.
The approach followed in this section can be seen as a bivariate version of Section 3.2 in \cite{DP19}, providing the structure of ODEs associated with the transient moments using lower triangular matrices with scalar entries.
The key difference, however, is that in our case they are replaced by block lower triangular matrices, containing matrix entries.

We first revisit the transient moments of $\bm{\lambda}(t)$ for a fixed $t\in\rr_+$ so as to illustrate the nested structure of the matrices.
After that, we consider the joint transient moments of $(\bm{Q}(t), \bm{\lambda}(t))$, which has a similar but more complex nested structure.
To motivate our analysis, consider the ODEs associated with the vectors $\bm{\Psi}_t^{(0,1)}$ and $\bm{\Psi}_t^{(0,2)}$ containing the first and second order moments of $\bm{\lambda}(t)$ (see Eqns.~\eqref{eq: ODE lambda transient order 1, 2-dim} and~\eqref{eq: ODE lambda transient order 2, 2-dim} in Appendix~\ref{appendix: bivariate recursive procedure}).
Observe that in stacked form, they can be represented in a block lower triangular matrix structure:
\begin{align} \label{eq:stack of stacked Psi lambda ODE}
\begin{split}
    \frac{\ddiff}{\ddiff t} \begin{bmatrix} \bm{\Psi}^{(0,1)}_t \\ \bm{\Psi}^{(0,2)}_t\end{bmatrix} &=
    \begin{bmatrix}
    \bm{A}_{1}^{2\times 2} & \bm{0}_{2}^{2\times 3} \\
    \bm{D}_{2}^{3\times 2} & \bm{C}_{2}^{3\times 3}
    \end{bmatrix}
    \begin{bmatrix} \bm{\Psi}^{(0,1)}_t \\\bm{\Psi}^{(0,2)}_t\end{bmatrix}
    +
    \begin{bmatrix}
    \bm{b}^{2\times 1}  \\
    \bm{0}^{3\times 1}
    \end{bmatrix},
\end{split}
\end{align}
with
$\bm{A}_{1}^{2\times 2} = \bm{M}^{(0,1)}$, $\bm{C}_2^{3 \times 3} = \bm{M}^{(0,2)}$, $\bm{D}_{2}^{3\times 2} = \bm{L}^{(0,2)}$, where the superscripts denote the dimensionality of the matrices, and where $\bm{M}^{(0,1)}$ and $\bm{M}^{(0,2)}$ are defined in Eqn.~\eqref{eq: def tridiag matrix M} and $\bm{L}^{(0,2)}$ in Eqn.~\eqref{eq: ODE lambda transient order 2, 2-dim}.
The notation $\bm{0}^{k\times l} \in \rr^{k\times l}$ represents the all-zeros matrix and $\bm{b}^{2\times 1} = (\alpha_1\overline{\lambda}_1, \alpha_2\overline{\lambda}_2)^\top$.
Observe that this stacked form contains the previously defined matrices as blocks.
A similar form occurs for higher orders $n$ of the stacked vector $(\bm{\Psi}^{(0,1)}_t,\ldots,\bm{\Psi}^{(0,n)}_t)^\top$, containing mixed moments of $\bm{\lambda}(t)$ up to order $n$.

Careful inspection of previous results reveals a \textit{nested sequence} of \textit{block lower triangular matrices}.
Let $m_n := n+1$ and define $\mathfrak{m}_n := m_1 + \dots + m_n$.
Then, consider a nested sequence of block lower triangular matrices $\{\bm{A}_{n}^{\mathfrak{m}_{n}\times\mathfrak{m}_{n}}\}_{n\in\nn}$ defined by
\begin{align} 
\bm{A}_{n}^{\mathfrak{m}_{n}\times\mathfrak{m}_{n}}=
\begin{bmatrix}
    \bm{A}_{n-1}^{\mathfrak{m}_{n-1}\times\mathfrak{m}_{n-1}} & \bm{0}_{n}^{\mathfrak{m}_{n-1}\times m_{n}} \\
    \bm{D}_{n}^{m_{n}\times \mathfrak{m}_{n-1}} & \bm{C}_{n}^{m_{n}\times m_{n}}
\end{bmatrix},
\label{eq:blockMatr}
\end{align}
where $\bm{A}_{1}^{2\times 2}\equiv \bm{C}_{1}^{2\times 2} = \bm{M}^{(0,1)}$, $\bm{C}_{n}^{m_{n}\times m_{n}} = \bm{M}^{(0,n)}$ and $\bm{D}_{n}^{m_{n}\times \mathfrak{m}_{n-1}} = \bm{L}^{(0,n)}$.
Clearly, the first term on the right-hand side of~\eqref{eq:stack of stacked Psi lambda ODE} occurs as a special case of~\eqref{eq:blockMatr} when $n=2$.
Recall that we know the structure of the matrices $\bm{M}^{(0,n)}$ as given in Eqn.~\eqref{eq: def tridiag matrix M}.
In Appendix~\ref{appendix: higher order moments 2-dim}, we give some further details on the structure of $\bm{L}^{(0,n)}$ for the case $n=3$.
The sequence of matrices $\{\bm{A}_{n}^{\mathfrak{m}_{n}\times\mathfrak{m}_{n}}\}_{n\in\nn}$, as defined in~\eqref{eq:blockMatr}, has been chosen such that
\begin{align}
\begin{split}
    \frac{\ddiff}{\ddiff t} \begin{bmatrix} \bm{\Psi}^{(0,1)}_t \\ 
    \vdots \\
    \bm{\Psi}^{(0,n)}_t\end{bmatrix} &=
    \bm{A}_{n}^{\mathfrak{m}_{n}\times\mathfrak{m}_{n}}
    \begin{bmatrix} \bm{\Psi}^{(0,1)}_t \\ 
    \vdots \\
    \bm{\Psi}^{(0,n)}_t\end{bmatrix}
    +
    \begin{bmatrix}
    \bm{b}^{2\times 1}  \\
    \bm{0}^{(\mathfrak{m}_{n}-m_{1})\times 1}
    \end{bmatrix},
\end{split}
\label{eq:stacked n-dim lambda}
\end{align}
with initial condition $(\bm{\Psi}^{(0,1)}_0, \dots, \bm{\Psi}^{(0,n)}_0)^\top$.

The following proposition, providing an explicit expression for $(\bm{\Psi}^{(0,1)}_t,\ldots,\bm{\Psi}^{(0,n)}_t)^\top$, follows directly by noting that taking the time derivative of Eqn.~\eqref{eq:stacked n-dim lambda solution} immediately yields Eqn.~\eqref{eq:stacked n-dim lambda}.

\begin{proposition} \label{prop: stacked n-dim lambda solution}
If $\bm{C}_{i}^{m_{i}\times m_{i}}$ is invertible for all $i\in\{1,\ldots,n\}$,
then 
\begin{align} \label{eq:stacked n-dim lambda solution}
\begin{split}
\begin{bmatrix} \bm{\Psi}^{(0,1)}_t \\ 
    \vdots \\
    \bm{\Psi}^{(0,n)}_t\end{bmatrix}
    &= e^{\bm{A}_{n}^{\mathfrak{m}_{n}\times\mathfrak{m}_{n}}t}
    \begin{bmatrix} \bm{\Psi}^{(0,1)}_0 \\ 
    \vdots \\
    \bm{\Psi}^{(0,n)}_0\end{bmatrix}
    -\left(\bm{A}_{n}^{\mathfrak{m}_{n}\times\mathfrak{m}_{n}}\right)^{-1}\left(\bm{I}^{\mathfrak{m}_{n}\times\mathfrak{m}_{n}} - e^{\bm{A}_{n}^{\mathfrak{m}_{n}\times\mathfrak{m}_{n}}t}\right)\begin{bmatrix}
    \bm{b}^{2\times 1}  \\
    \bm{0}^{(\mathfrak{m}_{n}-m_{1})\times 1}
    \end{bmatrix}.
\end{split}
\end{align}    
\end{proposition}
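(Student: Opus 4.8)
The plan is to read Eqn.~\eqref{eq:stacked n-dim lambda} as a linear, inhomogeneous, constant-coefficient system of ODEs and to verify the closed form \eqref{eq:stacked n-dim lambda solution} directly, exactly as the remark preceding the proposition suggests. I would abbreviate $\bm{A}\equiv\bm{A}_{n}^{\mathfrak{m}_{n}\times\mathfrak{m}_{n}}$, write $\bm{b}$ for the constant forcing vector $\big(\bm{b}^{2\times1},\bm{0}^{(\mathfrak{m}_{n}-m_{1})\times1}\big)^\top$, and $\bm{x}_0$ for the initial value $\big(\bm{\Psi}^{(0,1)}_0,\ldots,\bm{\Psi}^{(0,n)}_0\big)^\top$. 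Then \eqref{eq:stacked n-dim lambda} is simply $\tfrac{\ddiff}{\ddiff t}\bm{x}(t)=\bm{A}\,\bm{x}(t)+\bm{b}$, with the crucial feature that $\bm{b}$ does not depend on $t$; this is what lets the variation-of-constants (Duhamel) integral collapse to the explicit $\bm{A}^{-1}$-term, and is the only structural difference from Proposition~\ref{prop: solution vector ODE with matrix exponential}, where the forcing is time-dependent.

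First I would dispose of the hypothesis by establishing that $\bm{A}$ is invertible. By the nested definition \eqref{eq:blockMatr}, $\bm{A}$ is block lower triangular with diagonal blocks $\bm{C}_{1}^{m_{1}\times m_{1}},\ldots,\bm{C}_{n}^{m_{n}\times m_{n}}$. A straightforward induction on $n$ (Laplace expansion respecting the block partition, using $\bm{A}_{1}\equiv\bm{C}_{1}$) gives $\det\bm{A}=\prod_{i=1}^{n}\det\bm{C}_{i}^{m_{i}\times m_{i}}$. Hence the standing assumption that every $\bm{C}_{i}^{m_{i}\times m_{i}}$ is invertible is exactly equivalent to invertibility of $\bm{A}$, so that $\bm{A}^{-1}$ appearing in \eqref{eq:stacked n-dim lambda solution} is well defined.

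Next I would verify the candidate directly. Denote by $\bm{x}(t)$ the right-hand side of \eqref{eq:stacked n-dim lambda solution}, i.e.\ $\bm{x}(t)=e^{\bm{A}t}\bm{x}_0-\bm{A}^{-1}\big(\bm{I}-e^{\bm{A}t}\big)\bm{b}$. Evaluating at $t=0$ uses $e^{\bm{A}\cdot0}=\bm{I}$, so the second term vanishes and $\bm{x}(0)=\bm{x}_0$, matching the prescribed initial condition. Differentiating, using $\tfrac{\ddiff}{\ddiff t}e^{\bm{A}t}=\bm{A}e^{\bm{A}t}$ together with the commutativity of $\bm{A}$, $\bm{A}^{-1}$ and $e^{\bm{A}t}$, yields
\begin{align*}
\frac{\ddiff}{\ddiff t}\bm{x}(t)
= \bm{A}e^{\bm{A}t}\bm{x}_0+e^{\bm{A}t}\bm{b},
\end{align*}
while substituting $\bm{x}(t)$ into the right-hand side of the ODE gives
\begin{align*}
\bm{A}\,\bm{x}(t)+\bm{b}
= \bm{A}e^{\bm{A}t}\bm{x}_0-\big(\bm{I}-e^{\bm{A}t}\big)\bm{b}+\bm{b}
= \bm{A}e^{\bm{A}t}\bm{x}_0+e^{\bm{A}t}\bm{b}.
\end{align*}
The two expressions coincide, so $\bm{x}(t)$ solves \eqref{eq:stacked n-dim lambda}. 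By the standard uniqueness theorem for linear ODEs with constant coefficients, $\bm{x}(t)$ is the unique solution with this initial value, which establishes \eqref{eq:stacked n-dim lambda solution}.

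Finally, a word on where the difficulty lies. Analytically the argument is routine: the only genuine inputs are the invertibility reduction to the diagonal blocks $\bm{C}_{i}^{m_{i}\times m_{i}}$ and the commutativity facts used in the differentiation. The real content of the result is not this verification but the \emph{construction} carried out before the statement --- namely that the moment ODEs for $\bm{\lambda}(t)$ up to order $n$ can be arranged into the single nested block lower triangular matrix $\bm{A}_{n}^{\mathfrak{m}_{n}\times\mathfrak{m}_{n}}$ of \eqref{eq:blockMatr}, so that all mixed transient moments of $\bm{\lambda}(t)$ up to order $n$ are delivered simultaneously by one matrix exponential. The main care needed in writing the proof is therefore the bookkeeping: confirming that the blocks $\bm{C}_i=\bm{M}^{(0,i)}$ and $\bm{D}_i=\bm{L}^{(0,i)}$ slot together exactly as in \eqref{eq:stacked n-dim lambda}, so that the abbreviated system $\tfrac{\ddiff}{\ddiff t}\bm{x}=\bm{A}\bm{x}+\bm{b}$ is a faithful rewriting.
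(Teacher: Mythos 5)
Your proposal is correct and follows essentially the same route as the paper, which likewise proves the proposition by differentiating the candidate expression and observing that it reproduces the ODE \eqref{eq:stacked n-dim lambda}. Your additional remarks --- the block lower triangular determinant argument showing that invertibility of the $\bm{C}_{i}^{m_{i}\times m_{i}}$ is equivalent to invertibility of $\bm{A}_{n}^{\mathfrak{m}_{n}\times\mathfrak{m}_{n}}$, the check of the initial condition, and the appeal to uniqueness --- are all sound and merely make explicit what the paper leaves implicit.
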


Proposition \ref{prop: stacked n-dim lambda solution} allows for the simultaneous computation of the first $n$ transient moments of $\bm{\lambda}(t) = (\lambda_1(t),\lambda_2(t))^\top$, but it requires the computation of the matrix exponential and the inverse of $\bm{A}_n^{\mathfrak{m}_{n}\times\mathfrak{m}_{n}}$.

This idea can be extended to the joint transient moments of $(\bm{Q}(t), \bm{\lambda}(t))$.
As before, we show the details of the first and second order, and point out how this extends to higher order moments.
For the first order moments, close inspection of the associated ODEs (given in Eqns.~\eqref{eq: ODE lambda transient order 1, 2-dim} and \eqref{eq: ODE Q transient order 1, 2-dim} in Appendix~\ref{appendix: bivariate recursive procedure}) yields that
\begin{align} \label{eq: ODE stack of stacked Psi vector, QL joint first order moments}
\begin{split}
    &\frac{\ddiff}{\ddiff t} \bm{\Psi}_t^{(1)} 
    = \bm{F}_1^{4\times4} \bm{\Psi}_t^{(1)} 
    + \begin{bmatrix} \bm{b}^{2\times1} \\ \bm{0}^{2\times1} \end{bmatrix}
\end{split},\:\:\mbox{with}\:\:
    \bm{F}_1^{4\times4} = \begin{bmatrix}
    \bm{M}^{(0,1)} & \bm{0}^{2\times2} \\
    \bm{I}^{2\times2} & \bm{M}^{(1,0)}
    \end{bmatrix}, \quad
    \bm{b}_1^{2\times1} = \begin{bmatrix}
    \alpha_1\overline{\lambda}_1 \\
    \alpha_2\overline{\lambda}_2
    \end{bmatrix}.
\end{align}
As before, the superscripts denote the dimensionality of the matrices, with $\bm{I}^{k\times l} \in\rr^{k\times l}$ the identity matrix.
Note the block lower triangular shape of $\bm{F}_1^{4\times4}$.

For the second order moments, we can infer from the associated ODEs (given in Eqns.~\eqref{eq: ODE lambda transient order 2, 2-dim}, \eqref{eq: ODE Q + lambda transient order 2, 2-dim} and \eqref{eq: ODE Q transient order 2, 2-dim} in Appendix~\ref{appendix: bivariate recursive procedure}), in combination with Eqn.~\eqref{eq: ODE stack of stacked Psi vector, QL joint first order moments}, that
\begin{align}\label{eq: ODE stack of stacked Psi vector, QL joint second order moments}
    &\frac{\ddiff}{\ddiff t} 
    \begin{bmatrix}
    \bm{\Psi}_t^{(1)}  \\
    \bm{\Psi}_t^{(2)} 
    \end{bmatrix}
    = \bm{F}_2^{14\times14}
        \begin{bmatrix}
    \bm{\Psi}_t^{(1)}  \\
    \bm{\Psi}_t^{(2)} 
    \end{bmatrix}
    + \begin{bmatrix}
    \bm{b}_1^{2\times1} \\
    \bm{0}^{12\times 1}
    \end{bmatrix},\:\:\mbox{with}\:\:\:
    \bm{F}_2^{14\times14}
    = \begin{bmatrix}
    \bm{F}_1^{4\times4} & \bm{0}^{4\times10} \\
    \bm{G}_2^{10\times4} & \bm{H}_2^{10\times10},
    \end{bmatrix};
\end{align}
here $\bm{F}_2^{14\times 14}$ is a lower triangular matrix with the matrices contained in it defined by
\begin{align*}
    \bm{G}_2^{10\times 4}
    &= \begin{bmatrix}
    \bm{L}^{(0,2)} & \bm{0}^{3\times2} \\
    \bm{L}_{\lambda}^{(1,1)} & \bm{L}_{Q}^{(1,1)} \\
    \bm{0}^{3\times2} & \bm{0}^{3\times 2}
    \end{bmatrix},
    \quad \bm{H}_2^{10\times10}
    = \begin{bmatrix}
    \bm{M}^{(0,2)} & \bm{0}^{3\times 4} & \bm{0}^{3\times 3} \\
    \bm{K}^{(1,1)} & \bm{M}^{(1,1)} & \bm{0}^{3 \times 3} \\
    \bm{0}^{3\times3} & \bm{K}^{(2,0)} & \bm{M}^{(2,0)}
    \end{bmatrix}, \\
    \bm{L}_{\lambda}^{(1,1)}
    &=\begin{bmatrix}
    \ee[B_{11}] & 0 \\
    \ee[B_{21}] & 0 \\
    0 & \ee[B_{12}] \\
    0 & \ee[B_{22}]
    \end{bmatrix},
    \quad \bm{L}_Q^{(1,1)} 
    = \begin{bmatrix}
    \alpha_1\overline{\lambda}_1 & 0 \\
    \alpha_2\overline{\lambda}_2 & 0 \\
    0 & \alpha_1\overline{\lambda}_1 \\
    0 & \alpha_2\overline{\lambda}_2 \\
    \end{bmatrix},
\end{align*}
where $\bm{L}^{(0,2)}$ and the matrices $\bm{K}^{(1,1)}$ and $\bm{K}^{(2,0)}$ are known, see Appendix~\ref{appendix: bivariate recursive procedure}.

Continuing in this fashion we can consider vectors of arbitrary length $n\in\nn$.
Recall that we know the dimension $p_n\equiv \mathfrak{D}(2,n)$ of the vector ${\Psi}_t^{(n)}$ from Eqn.~\eqref{eq: size psi vector general d}, which also yields the dimension $\mathfrak{p}_n= p_1+\cdots + p_n$ of the stacked vector 
$(\bm{\Psi}_t^{(1)}, \bm{\Psi}_t^{(2)},\dots,\bm{\Psi}_t^{(n)})^\top$.
Consider the sequence of matrices $\{\bm{F}_n^{\mathfrak{p}_n\times\mathfrak{p}_n}\}_{n\in\nn}$ 
given by
\begin{align*}
    \bm{F}_n^{\mathfrak{p}_n\times\mathfrak{p}_n}
    =\begin{bmatrix}
    \bm{F}_{n-1}^{\mathfrak{p}_{n-1}\times\mathfrak{p}_{n-1}} & \bm{0}^{\mathfrak{p}_{n-1}\times p_n} \\
    \bm{G}_n^{p_n\times\mathfrak{p}_{n-1}} & \bm{H}_n^{p_n\times p_n}
    \end{bmatrix}
\!,\:
\mbox{with}\:\:\:
    \bm{H}_n^{p_n \times p_n}
    &= \begin{bmatrix}
    \bm{M}^{(0,n)} & \bm{0}  & \cdots & \bm{0} \\
    \bm{K}^{(1,n-1)} & \bm{M}^{(1,n-1)} & \cdots & \bm{0} \\
    \vdots & \ddots & \ddots & \vdots \\
    \bm{0} & \cdots & \bm{K}^{(n,0)} & \bm{M}^{(n,0)}
    \end{bmatrix}\!.
\end{align*}
The matrices $\bm{G}_n^{p_n\times\mathfrak{p}_{n-1}}$ are not as elegantly expressed for general $n\in\nn$, but can be explicitly obtained through Eqn.~\eqref{eq: PDE diff wrt s and z}.
We have used these matrices for $n=1,2$ to compute explicit moments in Appendix~\ref{appendix: transient moments 2-dim}, and in Appendix~\ref{appendix: higher order moments 2-dim} we give further details on these matrices for order $n=3$.
The stacked vector $(\bm{\Psi}_t^{(1)}, \bm{\Psi}_t^{(2)},\dots, \bm{\Psi}_t^{(n)})^\top$ satisfies the ODE
\begin{align} \label{eq: ODE stack of stacked Psi vector, QL joint general order moments}
    \frac{\ddiff}{\ddiff t}
    \begin{bmatrix} \bm{\Psi}^{(1)}_t \\ 
    \vdots \\
    \bm{\Psi}^{(n)}_t\end{bmatrix}
    &=
    \bm{F}_{n}^{\mathfrak{p}_{n}\times\mathfrak{p}_{n}}
    \begin{bmatrix} \bm{\Psi}^{(1)}_t \\ 
    \vdots \\
    \bm{\Psi}^{(n)}_t\end{bmatrix}
    +
    \begin{bmatrix}
    \bm{b}^{2\times 1}  \\
    \bm{0}^{(\mathfrak{p}_{n}-p_{1})\times 1}
    \end{bmatrix},
\end{align}
with initial condition given by $(\bm{\Psi}_0^{(1)}, \bm{\Psi}_0^{(2)},\dots, \bm{\Psi}_0^{(n)})^\top$.
The solution is given in the following proposition, noting that taking the time derivative of Eqn.~\eqref{eq: stack of stacked Psi Q lambda solution ODE} immediately yields Eqn.~\eqref{eq: ODE stack of stacked Psi vector, QL joint general order moments}.

\begin{proposition} \label{prop: stack of stacked Psi Q lambda solution}
If $\bm{H}_{i}^{p_{i}\times p_{i}}$ is invertible for all $i\in\{1,\ldots,n\}$,
then 
\begin{align} \label{eq: stack of stacked Psi Q lambda solution ODE}
\begin{split}
\begin{bmatrix} 
    \bm{\Psi}^{(1)}_t \\ 
    \vdots \\
    \bm{\Psi}^{(n)}_t\end{bmatrix}
    &= e^{\bm{F}_{n}^{\mathfrak{p}_{n}\times\mathfrak{p}_{n}}t}
    \begin{bmatrix} \bm{\Psi}^{(1)}_0 \\ 
    \vdots \\
    \bm{\Psi}^{(n)}_0\end{bmatrix}
    -\left(\bm{F}_{n}^{\mathfrak{p}_{n}\times\mathfrak{p}_{n}}\right)^{-1}
    \left(\bm{I}^{\mathfrak{p}_{n}\times\mathfrak{p}_{n}}
    -e^{\bm{F}_{n}^{\mathfrak{p}_{n}\times\mathfrak{p}_{n}}t}\right)
    \begin{bmatrix}
    \bm{b}^{2\times 1}  \\
    \bm{0}^{(\mathfrak{p}_{n}-p_{1})\times 1}
    \end{bmatrix}.
\end{split}
\end{align}    
\end{proposition}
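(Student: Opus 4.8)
The plan is to treat Eqn.~\eqref{eq: ODE stack of stacked Psi vector, QL joint general order moments} as a standard inhomogeneous linear ODE with constant coefficients and to verify the closed-form candidate \eqref{eq: stack of stacked Psi Q lambda solution ODE} by direct differentiation, exactly as the remark preceding the proposition suggests; structurally this mirrors the argument behind Proposition~\ref{prop: stacked n-dim lambda solution}. Write $\bm{x}(t) := (\bm{\Psi}^{(1)}_t,\ldots,\bm{\Psi}^{(n)}_t)^\top$ and $\bm{c} := (\bm{b}^{2\times1},\bm{0}^{(\mathfrak{p}_n-p_1)\times1})^\top$, so that the ODE reads $\dot{\bm{x}}(t) = \bm{F}_n^{\mathfrak{p}_n\times\mathfrak{p}_n}\bm{x}(t) + \bm{c}$ with $\bm{c}$ constant in $t$.

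First I would establish that $\bm{F}_n^{\mathfrak{p}_n\times\mathfrak{p}_n}$ is invertible, so that the inverse appearing in \eqref{eq: stack of stacked Psi Q lambda solution ODE} is meaningful. By its recursive definition, $\bm{F}_n^{\mathfrak{p}_n\times\mathfrak{p}_n}$ is block lower triangular; unwinding the recursion, its diagonal blocks are precisely $\bm{H}_1^{p_1\times p_1},\ldots,\bm{H}_n^{p_n\times p_n}$ (with $\bm{H}_1 := \bm{F}_1^{4\times4}$). Since the determinant of a block triangular matrix is the product of the determinants of its diagonal blocks, the hypothesis that each $\bm{H}_i^{p_i\times p_i}$ is invertible yields $\det \bm{F}_n^{\mathfrak{p}_n\times\mathfrak{p}_n} = \prod_{i=1}^n \det \bm{H}_i^{p_i\times p_i} \neq 0$, hence invertibility. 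Note that each $\bm{H}_i$ is itself block lower triangular with the tridiagonal blocks $\bm{M}^{(k,i-k)}$ of Eqn.~\eqref{eq: def tridiag matrix M} on its diagonal, so the condition can equivalently be read off from the $\bm{M}^{(k,n-k)}$.

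Next I would verify the candidate. Differentiating the right-hand side of \eqref{eq: stack of stacked Psi Q lambda solution ODE} in $t$ and using $\tfrac{\ddiff}{\ddiff t} e^{\bm{F}_n^{\mathfrak{p}_n\times\mathfrak{p}_n}t} = \bm{F}_n^{\mathfrak{p}_n\times\mathfrak{p}_n}e^{\bm{F}_n^{\mathfrak{p}_n\times\mathfrak{p}_n}t}$, together with the fact that $\bm{F}_n^{\mathfrak{p}_n\times\mathfrak{p}_n}$, its inverse, and $e^{\bm{F}_n^{\mathfrak{p}_n\times\mathfrak{p}_n}t}$ all commute, the first term contributes $\bm{F}_n^{\mathfrak{p}_n\times\mathfrak{p}_n} e^{\bm{F}_n^{\mathfrak{p}_n\times\mathfrak{p}_n}t}\bm{x}(0)$ and the second contributes $e^{\bm{F}_n^{\mathfrak{p}_n\times\mathfrak{p}_n}t}\bm{c}$; collecting terms and substituting the candidate back in recovers $\bm{F}_n^{\mathfrak{p}_n\times\mathfrak{p}_n}\bm{x}(t) + \bm{c}$, i.e.\ Eqn.~\eqref{eq: ODE stack of stacked Psi vector, QL joint general order moments}. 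Evaluating the candidate at $t=0$ gives $e^{\bm{0}}=\bm{I}^{\mathfrak{p}_n\times\mathfrak{p}_n}$ in the first term and $\bm{I}^{\mathfrak{p}_n\times\mathfrak{p}_n} - e^{\bm{0}} = \bm{0}$ in the second, matching the initial condition $\bm{x}(0) = (\bm{\Psi}_0^{(1)},\ldots,\bm{\Psi}_0^{(n)})^\top$. Since the system is linear with constant coefficients, the standard existence–uniqueness theorem makes this the unique solution.

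I do not anticipate a genuine analytic difficulty: the statement is the variation-of-constants formula $\bm{x}(t) = e^{\bm{A}t}\bm{x}(0) + \int_0^t e^{\bm{A}(t-s)}\bm{c}\,\ddiff s$ specialized to constant $\bm{c}$, where the integral evaluates in closed form to $-(\bm{A})^{-1}(\bm{I} - e^{\bm{A}t})\bm{c}$ precisely because $\bm{A} = \bm{F}_n^{\mathfrak{p}_n\times\mathfrak{p}_n}$ is invertible. The only place the hypothesis genuinely enters — and hence the main (if modest) step — is transferring invertibility of the diagonal blocks $\bm{H}_i^{p_i\times p_i}$ to invertibility of the full block lower triangular matrix $\bm{F}_n^{\mathfrak{p}_n\times\mathfrak{p}_n}$ via the block-triangular determinant identity.
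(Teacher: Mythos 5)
Your proposal is correct and follows essentially the same route as the paper, which proves the proposition simply by noting that differentiating the candidate in Eqn.~\eqref{eq: stack of stacked Psi Q lambda solution ODE} recovers the ODE in Eqn.~\eqref{eq: ODE stack of stacked Psi vector, QL joint general order moments}. Your additional step of deducing invertibility of $\bm{F}_n^{\mathfrak{p}_n\times\mathfrak{p}_n}$ from the hypothesis on the diagonal blocks $\bm{H}_i^{p_i\times p_i}$ via the block lower triangular structure is a welcome detail that the paper leaves implicit.
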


A crucial computational advantage of Proposition~\ref{prop: stack of stacked Psi Q lambda solution} is that the evaluation of joint moments does not require integration of matrix exponentials as in Proposition~\ref{prop: solution vector ODE with matrix exponential}.
By considering the joint moments, the matrix in the associated ODE is more involved, but we reduced the non-homogeneous part of the ODE in Eqn.~\eqref{eq: ODE stack of stacked Psi vector, QL joint general order moments} to a constant, which allows for a closed-form solution.
This in particular means that the run time of computing these transient moments does not increase in $t$, as opposed to the other computational methods.
In the numerical computations of Section~\ref{sec: numerics} we quantify the advantage in terms of computation time, compared to alternative approaches.

\section{The Nearly Unstable Behavior} \label{sec: nearly unstable}

In this section, we analyze the stationary behavior of $\bm{\lambda}$ when the spectral radius of $\bm{H}$ approaches 1, see Eqn.~\eqref{eq: general stability condition}, i.e., when the underlying process approaches criticality.
This regime directly relates to the \textit{heavy traffic} regime in queueing theory.
In the setting considered, the dimension $d\in\nn$ is general.
For the sake of tractability, we impose the following `symmetry assumption'.

\begin{assumption}\label{ass_sym}
For all $i\in[d]$, 
\begin{align} \label{eq: symmetric parameter choice heavy traffic, d-dim}
    \alpha_i = \alpha \geqslant 0, \quad B_{1i} \overset{d}{=} \dots \overset{d}{=} B_{di} \overset{d}{=}B_i, \quad \overline{\lambda}_i = \overline{\lambda} > 0,
\end{align}
where $B_i$ are independent non-negative random variables with $\ee[B_i^2] <\infty$.
\end{assumption}
This choice of parameters induces symmetry, since it implies that each component $\lambda_i$ has the same base rate $\overline{\lambda}$, the same decay rate $\alpha$, and that it is self- or cross-excited by all $B_1,\dots,B_d$.
The object of study is the Laplace transform
\begin{align}
    \cT\{\bm{\lambda}\}(\bm{s}) = \ee\big[ e^{-\bm{s}^\top \bm{\lambda}}\big] 
    = \ee\Big[ \prod_{i=1}^d e^{-s_i\lambda_i}\Big].
\end{align}
The following result, proven in Appendix \ref{appendix: nearly unstable}, yields an explicit solution for $\cT\{\bm{\lambda}\}(\bm{s})$.

\begin{lemma}\label{lemma: near unstable}
Assume Eqn.~\eqref{eq: symmetric parameter choice heavy traffic, d-dim} and let $\beta_i(u) = \ee[e^{-uB_i}]$ for any $u \geqslant 0$. 
Then, with $\overline{s} = s_1 + \dots + s_d$, we have
\begin{align} \label{eq: lemma statement solution LT lambda steady-state}
    \cT\{\bm{\lambda}\}(\bm{s}) = \exp\Big(-\alpha\overline{\lambda} \int_0^{\overline{s}} \frac{u}{\alpha u + \sum_{i=1}^d \beta_i(u) - d} \ddiff u\Big).
\end{align}
\end{lemma}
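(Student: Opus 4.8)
The plan is to start from Corollary~\ref{cor: zeta_0 characterization} evaluated at $\bm z=\bm 1$, since $\zeta(t,\bm s,\bm 1)=\ee[e^{-\bm s^\top\bm\lambda(t)}]$ is precisely the Laplace transform of $\bm\lambda(t)$, and then let $t\to\infty$. With $z_j=1$ the factor $(z_j-1)e^{-\mu_j v}$ drops out of Eqn.~\eqref{eq: thm statement zeta ODE for tilde_s}, so each $\tilde s_j(\cdot)$ solves the autonomous system $\tilde s_j'(v)=-\alpha_j\tilde s_j(v)-\beta_j(\tilde{\bm s}(v))+1$ with $\tilde s_j(0)=s_j$. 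Under Assumption~\ref{ass_sym} (so $\alpha_j=\alpha$, $\overline\lambda_j=\overline\lambda$), Corollary~\ref{cor: zeta_0 characterization} then reads
\[
    \zeta(t,\bm s,\bm 1)=\exp\Big(-\overline\lambda\,\sigma(t)-\overline\lambda\,\alpha\int_0^t\sigma(v)\,\ddiff v\Big),\qquad \sigma(v):=\sum_{j=1}^d\tilde s_j(v),
\]
so the whole computation collapses onto the single scalar quantity $\sigma$.

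The next step is to close a scalar ODE for $\sigma$ using the symmetry. Assumption~\ref{ass_sym} is to be read as a common-shock model: a point in component $j$ raises \emph{every} intensity by the same amount $B_j$, i.e.\ $\bm B_j=B_j\bm 1$ with $\bm 1=(1,\dots,1)^\top$, whence $\beta_j(\bm s)=\ee[e^{-B_j\overline s}]$ and therefore $\beta_j(\tilde{\bm s}(v))=\ee[e^{-B_j\sigma(v)}]=\beta_j(\sigma(v))$ in the single-variable notation of the lemma. Summing the $d$ equations for $\tilde s_j'$ yields the autonomous scalar ODE
\[
    \sigma'(v)=-G(\sigma(v)),\qquad G(x):=\alpha x+\sum_{j=1}^d\beta_j(x)-d,\qquad \sigma(0)=\overline s.
\]

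It then remains to justify the limit $t\to\infty$ and to evaluate the resulting integral. For the limit I would note $G(0)=\sum_j\beta_j(0)-d=0$ and that $G$ is convex, since $G''(x)=\sum_j\ee[B_j^2e^{-B_jx}]>0$; moreover, under Assumption~\ref{ass: stability condition} the matrix $\bm H=\alpha^{-1}\bm 1(\ee[B_1],\dots,\ee[B_d])$ is rank one with $\rho(\bm H)=\alpha^{-1}\sum_j\ee[B_j]<1$, so $G'(0)=\alpha-\sum_j\ee[B_j]>0$. Convexity together with $G(0)=0$ and $G'(0)>0$ forces $G(x)>0$ for all $x>0$, so $\sigma$ decreases monotonically and exponentially from $\overline s$ to the stable fixed point $0$; hence $\sigma(t)\to 0$ and $\int_0^\infty\sigma(v)\,\ddiff v<\infty$, giving $\cT\{\bm\lambda\}(\bm s)=\exp(-\overline\lambda\,\alpha\int_0^\infty\sigma(v)\,\ddiff v)$. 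Since $\sigma$ is strictly decreasing I then substitute $u=\sigma(v)$, $\ddiff v=-\ddiff u/G(u)$, which maps $v\in[0,\infty)$ onto $u\in[0,\overline s]$ and gives $\int_0^\infty\sigma\,\ddiff v=\int_0^{\overline s}u/G(u)\,\ddiff u$, exactly Eqn.~\eqref{eq: lemma statement solution LT lambda steady-state}.

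The main obstacle is the limit step: converting the matrix stability condition $\rho(\bm H)<1$ into the scalar inequality $G'(0)>0$ and, via convexity of $G$, into global positivity of $G$ on $(0,\infty)$, which is what simultaneously legitimizes the passage to the limit (vanishing of the boundary term $\overline\lambda\,\sigma(t)$ and integrability) and the change of variables. The genuinely structural insight is the reduction to the scalar ODE for $\sigma$, which hinges on the common-shock reading of Assumption~\ref{ass_sym}; once that reduction is in place, the remaining manipulations are elementary.
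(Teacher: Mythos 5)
Your proof is correct, but it follows a genuinely different route from the paper's. The paper works directly with the \emph{stationary} version of the PDE~\eqref{eq: PDE zeta rewritten}: it sets $\bm z=\bm 1$, drops the time derivative, uses the exchangeability of $(\lambda_1,\dots,\lambda_d)$ to equate all partial derivatives $\partial_{s_i}\cT\{\bm\lambda\}(\bm s)$, thereby reducing the PDE to the scalar relation $\partial_{s_1}\cT = -\alpha\overline\lambda\,\overline s\,\big(\alpha\overline s+\sum_i\beta_i(\overline s)-d\big)^{-1}\cT$, and then integrates coordinate by coordinate (first in $s_1$, then in $s_2$, etc.) to assemble the single integral over $[0,\overline s]$. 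You instead stay with the \emph{transient} characterization of Corollary~\ref{cor: zeta_0 characterization} at $\bm z=\bm 1$, exploit the common-shock reading of Assumption~\ref{ass_sym} to collapse the characteristic system onto the scalar autonomous ODE $\sigma'=-G(\sigma)$ for $\sigma=\sum_j\tilde s_j$, pass to the limit $t\to\infty$, and convert $\int_0^\infty\sigma(v)\,\ddiff v$ into $\int_0^{\overline s}u/G(u)\,\ddiff u$ by the substitution $u=\sigma(v)$. Both arguments hinge on the same structural facts --- that $\beta_j(\tilde{\bm s})$ depends on $\tilde{\bm s}$ only through its coordinate sum, and that symmetry makes the problem one-dimensional --- but your version makes the role of the stability condition explicit ($G'(0)=\alpha-\sum_j\ee[B_j]>0$ together with convexity gives $G>0$ on $(0,\infty)$, hence monotone decay of $\sigma$ to the fixed point $0$ and integrability), and it justifies the passage to stationarity as an honest limit rather than by assuming the stationary PDE holds with the time derivative removed. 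The paper's version is shorter and avoids any dynamical argument, at the cost of implicitly assuming that $\partial_t\ee[e^{-\bm s^\top\bm\lambda(t)}]\to 0$ and of an exchangeability step stated only via equality of marginals. One small caution: the symbol $\sigma$ you introduce for $\sum_j\tilde s_j(v)$ clashes with the constant $\sigma=2\alpha(\sum_i\ee[B_i^2])^{-1}$ defined just after the lemma, so it should be renamed if inserted into the paper.
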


We now use the above lemma to derive the desired limit result in the nearly unstable case.
Observe that the stability condition of the matrix $\bm{C}$, see Eqn.~\eqref{eq: general stability condition}, having a maximum eigenvalue smaller than 1, is in our setting explicitly given by
\begin{align}
    \theta := \frac{1}{\alpha} \sum_{i=1}^d \ee[B_i] < 1.
\end{align}
Furthermore, let $\sigma := 2\alpha \big(\sum_{i=1}^d \ee[B_i^2]\big)^{-1}$.
The following result is proven in Appendix \ref{appendix: nearly unstable}.

\begin{theorem} \label{thm: heavy traffic lambda thm}
Under Assumption \ref{ass_sym}, 
\begin{align} \label{eq: limit heavy traffic Laplace transform lambda steady state, d-dim}
    \lim_{\theta \uparrow 1} \cT\{\bm{\lambda}\}(\bm{s}(1-\theta)) 
    = \Big( \frac{\sigma}{\sigma + \overline{s}}\Big)^{\sigma\overline{\lambda}}.
\end{align}
\end{theorem}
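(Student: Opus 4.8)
The plan is to start from the closed-form Laplace transform in Lemma~\ref{lemma: near unstable} and analyze the behavior of the defining integral as its upper limit shrinks to zero. Writing $D(u) := \alpha u + \sum_{i=1}^d \beta_i(u) - d$ for the denominator, Lemma~\ref{lemma: near unstable} shows that $\cT\{\bm{\lambda}\}(\bm{s})$ depends on $\bm{s}$ only through $\overline{s}$, so that $\cT\{\bm{\lambda}\}(\bm{s}(1-\theta)) = \exp(-\alpha\overline{\lambda}\, I_\theta)$ with $I_\theta := \int_0^{\overline{s}(1-\theta)} u/D(u)\,\ddiff u$. Since $\theta = \alpha^{-1}\sum_{i=1}^d \ee[B_i] < 1$, the upper limit $\overline{s}(1-\theta)$ tends to $0$ as $\theta \uparrow 1$, so the limit is governed entirely by the behavior of the integrand near the origin. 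The natural rescaling is $u = (1-\theta)w$, which maps the range back to $[0,\overline{s}]$ and yields $I_\theta = \int_0^{\overline{s}} f_\theta(w)\,\ddiff w$ with $f_\theta(w) := (1-\theta)^2 w / D((1-\theta)w)$.

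Next I would expand $D$ near $0$. Because $\ee[B_i^2] < \infty$ by Assumption~\ref{ass_sym}, each $\beta_i(u) = 1 - u\ee[B_i] + \tfrac12 u^2 \ee[B_i^2] + o(u^2)$, so that $D(u) = \alpha(1-\theta)u + a u^2 + o(u^2)$ with $a := \tfrac12\sum_{i=1}^d \ee[B_i^2]$; here the linear coefficient is exactly $\alpha(1-\theta)$, which is where the stability gap enters. Substituting $u = (1-\theta)w$ gives $D((1-\theta)w) = (1-\theta)^2(\alpha w + a w^2) + o((1-\theta)^2)$ for each fixed $w$, hence $f_\theta(w) \to w/(\alpha w + a w^2) = 1/(\alpha + a w)$ pointwise on $(0,\overline{s}]$.

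To pass the limit through the integral I would invoke dominated convergence. The function $\beta_i$ is completely monotone, so $D$ is convex with $D(0)=0$ and $D'(0) = \alpha(1-\theta) > 0$; the tangent-line bound for convex functions gives $D(u) \geq \alpha(1-\theta)u$ for $u \geq 0$, which also guarantees $D>0$ on the integration range, so that $I_\theta$ is well-defined. Consequently $f_\theta(w) \leq (1-\theta)^2 w / (\alpha(1-\theta)^2 w) = 1/\alpha$ uniformly in $\theta$ and $w$, and the constant $1/\alpha$ is integrable on $[0,\overline{s}]$. Dominated convergence then yields $\lim_{\theta\uparrow1} I_\theta = \int_0^{\overline{s}} \ddiff w/(\alpha + a w) = a^{-1}\ln(1 + a\overline{s}/\alpha)$. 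Using $\sigma = \alpha/a$ this equals $(\sigma/\alpha)\ln((\sigma+\overline{s})/\sigma)$, so that $\lim_{\theta\uparrow1}\cT\{\bm{\lambda}\}(\bm{s}(1-\theta)) = \exp(-\sigma\overline{\lambda}\ln((\sigma+\overline{s})/\sigma)) = (\sigma/(\sigma+\overline{s}))^{\sigma\overline{\lambda}}$, as claimed.

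The main obstacle is the justification of the interchange of limit and integral: the denominator $D$ degenerates near the origin precisely as $\theta\uparrow1$, since its linear coefficient $\alpha(1-\theta)$ vanishes, so a naive pointwise limit is not enough. The rescaling $u=(1-\theta)w$ together with the convexity-based lower bound $D(u)\geq\alpha(1-\theta)u$ is the crux: it simultaneously produces a clean limiting integrand and a $\theta$-uniform integrable majorant, while also ensuring positivity of $D$ so that $I_\theta$ makes sense throughout.
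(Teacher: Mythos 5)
Your proposal is correct and follows essentially the same route as the paper's proof: both start from Lemma~\ref{lemma: near unstable}, rescale the integration variable by $1-\theta$, apply the second-order Taylor expansion $\beta_i(u)=1-u\ee[B_i]+\tfrac12 u^2\ee[B_i^2]+o(u^2)$ so that the linear terms collapse to the stability gap $\alpha(1-\theta)$, and evaluate the resulting limit integral $\int_0^{\overline{s}}(\alpha+aw)^{-1}\,\ddiff w$. The one substantive addition is your dominated-convergence step with the convexity bound $D(u)\geqslant\alpha(1-\theta)u$, which rigorously justifies the interchange of limit and integral that the paper carries out only informally by writing an $o(1)$ term inside the integrand; this is a genuine (and welcome) tightening, not a different argument.
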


Theorem \ref{thm: heavy traffic lambda thm} shows that the limiting random vector has a (multivariate) Gamma distribution.
It is immediately verified that, for any $i,j\in[d]$,
\begin{align} \label{eq: covariance steady-state heavy traffic}
\begin{split}
    \lim_{\theta \uparrow 1}\text{Cov}((1-\theta)\lambda_i,(1-\theta)\lambda_j) 
    = \overline{\lambda} / \sigma,
\end{split}
\end{align}
by virtue of Eqn.~\eqref{eq: limit heavy traffic Laplace transform lambda steady state, d-dim}.
This yields the following result, writing $\Gamma(r,\lambda)$ for a Gamma distributed random variable with shape parameter $r>0$ and scale parameter $\lambda>0$. It follows immediately from Theorem \ref{thm: heavy traffic lambda thm} combined with Lévy's continuity theorem.
The covariance expression follows from Eqn.~\eqref{eq: covariance steady-state heavy traffic}.

\begin{corollary} \label{cor: lambda heavy traffic convergence in distribution}
Under Assumption \ref{ass_sym}, for a random vector $\bm{X}$, we have as $\theta \uparrow 1$
\begin{align*}
    (1-\theta)\bm{\lambda} \overset{d}{\to} \bm{X},
\end{align*}
where each marginal $X_i \sim \Gamma(\sigma\overline{\lambda}, \sigma)$ and $\text{\rm Cov}(X_i,X_j) = \overline{\lambda} /\sigma$ for any $i,j\in[d]$.
\end{corollary}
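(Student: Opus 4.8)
The plan is to read the claimed convergence directly off the Laplace-transform limit established in Theorem~\ref{thm: heavy traffic lambda thm}, using a continuity-theorem argument that is available because all components of $\bm{\lambda}$ are non-negative. First I would note that, for any fixed $\bm{s}\in\rr_+^d$, the quantity $\cT\{\bm{\lambda}\}(\bm{s}(1-\theta)) = \ee[e^{-\bm{s}^\top (1-\theta)\bm{\lambda}}]$ is precisely the multivariate Laplace transform of the scaled vector $(1-\theta)\bm{\lambda}$ evaluated at $\bm{s}$. Theorem~\ref{thm: heavy traffic lambda thm} then tells us that, as $\theta\uparrow 1$, this transform converges pointwise on $\rr_+^d$ to the limit function $\big(\sigma/(\sigma+\overline{s})\big)^{\sigma\overline{\lambda}}$, with $\overline{s}=s_1+\dots+s_d$.

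Second, I would identify this limit as the Laplace transform of an explicit random vector $\bm{X}$. Since the limit depends on $\bm{s}$ only through the sum $\overline{s}$, the natural candidate is the fully comonotone vector $\bm{X}=X\bm{1}$ built from a single $X\sim\Gamma(\sigma\overline{\lambda},\sigma)$: indeed, by the standard Gamma Laplace transform, $\ee[e^{-\bm{s}^\top \bm{X}}] = \ee[e^{-\overline{s}X}] = \big(\sigma/(\sigma+\overline{s})\big)^{\sigma\overline{\lambda}}$. Setting $s_j=0$ for $j\neq i$ then recovers the marginal transform $\big(\sigma/(\sigma+s_i)\big)^{\sigma\overline{\lambda}}$, which confirms $X_i\sim\Gamma(\sigma\overline{\lambda},\sigma)$ and shows that each marginal is as claimed.

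Third, I would invoke L\'evy's continuity theorem in its Laplace-transform form for non-negative random vectors: pointwise convergence of the transforms on $\rr_+^d$ to a function that is itself the transform of a proper probability law yields $(1-\theta)\bm{\lambda}\overset{d}{\to}\bm{X}$. Finally, the covariance statement $\mathrm{Cov}(X_i,X_j)=\overline{\lambda}/\sigma$ follows from Eqn.~\eqref{eq: covariance steady-state heavy traffic} (equivalently, by differentiating the joint limit transform twice at the origin and using $\mathrm{Var}(X)=\sigma\overline{\lambda}/\sigma^2=\overline{\lambda}/\sigma$).

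There is little genuine obstacle once Theorem~\ref{thm: heavy traffic lambda thm} is available; the only step requiring care is the passage from convergence of Laplace transforms to convergence in distribution. The clean way to justify this is to observe that the limit function is a bona fide Laplace transform that is continuous at $\bm{s}=\bm{0}$, where it equals $1$, so the continuity theorem applies directly and no separate tightness verification is needed.
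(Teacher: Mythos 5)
Your proposal is correct and follows essentially the same route as the paper, which likewise deduces the convergence from Theorem~\ref{thm: heavy traffic lambda thm} via L\'evy's continuity theorem and reads the covariance off Eqn.~\eqref{eq: covariance steady-state heavy traffic}. Your additional observation that the limit law is the comonotone vector $X\bm{1}$ with $X\sim\Gamma(\sigma\overline{\lambda},\sigma)$ (since the limiting transform depends on $\bm{s}$ only through $\overline{s}$) is a correct and slightly more explicit identification of what the paper calls a ``multivariate Gamma distribution,'' consistent with $\mathrm{Cov}(X_i,X_j)=\mathrm{Var}(X_i)=\overline{\lambda}/\sigma$.
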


\section{Numerical Experiments} \label{sec: numerics}
The objective of the results presented in the previous section is to be able to numerically evaluate moments.
In this section we compare the resulting output, in terms of efficiency and accuracy, to two alternatives.

\smallskip

\noindent $\circ$~The first alternative is based on \textit{finite differences} (FD). 
    The main idea is that we obtain approximations of moments by performing numerical differentiation of the relevant transform, using the characterizations in Theorem~\ref{thm: zeta characterization} and Corollary~\ref{cor: zeta_0 characterization}.
As we have seen, the moments of interest can be obtained by appropriately differentiating the joint transform with respect to $\bm{s}$ and $\bm{z}$ and then setting $\bm{s} = \bm{0}$ and $\bm{z}=\bm{1}$.
In the FD approach these derivatives are approximated by the corresponding (central) finite differences, parameterized the `width parameter' $h>0$. 
In our experiments, we assess how the precision of the approximations depends on $h$.
 
    \noindent $\circ$~The second alternative technique is based on \textit{Monte Carlo simulation} (MC). To simulate the Hawkes process we use an algorithm based on Ogata's thinning algorithm; see \cite{O81} and \cite[Algorithm 1.21]{L09} for details. The sampling mechanism is based on the cluster representation of \cite[Definition 2]{KLM21}. 
    Clearly, when relying on MC there is an evident tradeoff between precision and computational effort. Indeed, the well-known rule of thumb is that a reduction of the width of the confidence interval by a factor 2, requires the number of runs $m\in\nn$ to be multiplied by a factor 4.

\smallskip

The first subsection focuses on computing moments of order $n=1$ and $n=2$ in the $d$-dimensional setting of Section~\ref{sec: joint moments d-dim}.
For $d=3$ we compute these moments by evaluating the solutions of the ODEs derived in Section~\ref{sec: joint moments d-dim}, which are used as benchmarks to compare the alternative methods to.
In the second subsection, we consider a setting with $d=2$, where we apply the results of the nested block-matrix developed in Section~\ref{sec: nested block matrices}, allowing us to compute moments up to any order $n\in\nn$.
By this method, relying on analytical closed-form expressions, we compute moments of order up to three, which are later used as benchmarks.

The performance of the various approaches encompasses efficiency and precision, 
which we quantify in terms of {\it run time} and {\it error} (relative to the benchmark that we defined above, in the sequel abbreviated by BM), respectively.
Two types of errors are distinguished, namely the Mean Absolute Error (MAE) and Mean Relative Error (MRE):
\begin{align}
    \text{MAE} = \sum_{j=1}^{k} |m_j^\textrm{(BM)} - m_j^\textrm{(FD/MC)}|, \quad 
    \text{MRE} = \sum_{j=1}^{k} \frac{|m_j^\textrm{(BM)} - m_j^\textrm{(FD/MC)}|}{m_j^\textrm{(BM)}},
\end{align}
where $m_j$ denotes our BM value of the $j$-th moment, $k$ is the number of moments computed, and the superscript indicates the computational method used.
The general conclusion of this section is that the experiments systematically reveal that the benchmarks outperform the alternative approaches, in terms of efficiency and accuracy, typically by a large margin.

\subsection{Multivariate}

We consider an example of dimension $d=3$, in which we numerically evaluate the first and second order moments of $(\bm{Q}(t),\bm{\lambda}(t))$. 
We start by taking $t=5$, to later study the impact of the value of $t$.
The marks are exponentially distributed: for any combination $i,j\in\{1,2,3\}$, we set $B_{ij} \sim \text{Exp}(b_{ij})$ for some $b_{ij} > 0$, which are (for simplicity) assumed independent. 
In the experiments we take
\begin{align*}
    \overline{\bm{\lambda}} = \begin{bmatrix}
    0.3 \\
    1 \\
    0.5
    \end{bmatrix},
    \quad   \ee\bm{B} = \begin{bmatrix}
    0.5 & 0.3 & 0.4 \\
    0.7 & 0.5 & 0.5 \\
    0.4 & 0.2 & 0.5
    \end{bmatrix},
    \quad \bm{D}_\alpha = \begin{bmatrix}
    2 & 0 & 0 \\
    0 & 1.5 & 0 \\
    0 & 0 & 2.5
    \end{bmatrix},
    \quad \bm{D}_\mu = \begin{bmatrix}
    1.5 & 0 & 0 \\
    0 & 0.5 & 0 \\
    0 & 0 & 1
    \end{bmatrix}.
\end{align*}
One readily verifies that for these parameters the stability condition of Assumption~\ref{ass: stability condition} is met.

Recall that the stacked vector $\bm{\Sigma}_t^{(1)}$ and the stacked matrix $\bm{\Sigma}_t^{(2)}$ contain all the first order moments and combinations of second order moments, respectively; see Eqns.~\eqref{eq: def transient order 1 d dim stacked vector} and \eqref{eq: def 2nd order stacked matrix d-dim}.
In this subsection, the benchmark BM corresponds to the solution of the vector- and matrix-valued ODEs as given in Eqns.~\eqref{eq: transient ODE 1st order d-dim} and \eqref{eq: transient ODE 2nd order d-dim}, obtained using the \texttt{SciPy} package in \texttt{Python}.
We used the default precision of the \texttt{SciPy} ODE solver; the output presented in the next subsection indicates that this provides sufficiently precise results.

Table~\ref{tab: runtime all trivariate moments}, displaying the resulting run times and errors, quantifies the superior performance of our approach.
In each of the settings, the run time RT is reliably estimated by performing the experiment sufficiently often and taking the average of the corresponding run times.
The table shows that the benchmark ODE method is faster than the FD method, and orders of magnitude faster than the Monte Carlo simulation, where the latter method in addition typically yields estimates with substantial errors (where the number of simulation runs is $m = 10^3$). 
We see that in the FD method smaller values of $h$ lead to lower run times: in this method, we vary the arguments $\bm{s}$ and $\bm{z}$ with $h$ when evaluating the joint transform, which is faster for smaller values of $h$.
Furthermore, observe that the MAE and MRE are not monotone in $h$: for larger $h$ the derivative is poorly approximated by the finite difference, while for smaller $h$  numerical stability issues have a detrimental effect.
There is an optimal width where the error is smallest, which in our instance happens to be around $h=10^{-3}$.

\begin{table}[h!]
{\small
    \centering
    \begin{tabular}{c c c c c c c c c c c c } 
         & BM & & \multicolumn{4}{c}{FD} & & \multicolumn{4}{c}{MC} \\
         \cline{2-2} \cline{4-7} \cline{9-12}
        $n$ & RT & & $h$ & RT & MAE & MRE &  & $m$ & RT  & MAE & MRE \\ \hline
        \rule{0pt}{2.5ex} 1 & 7.17$\,\cdot 10^{-3}$  & & $10^{-2}$ & 5.26$\,\cdot 10^{-2}$  & 1.27$\,\cdot 10^{-3}$ & 3.42$\,\cdot 10^{-4}$ & & $10^2$ & 9 & 8.92$\,\cdot 10^{-1}$ & 4.98$\,\cdot 10^{-1}$  \\
        & $\cdot$ &  & $10^{-3}$ & 3.94$\, \cdot 10^{-2}$ &1.81$\,\cdot 10^{-4}$ &  9.84$\,\cdot10^{-5}$ &  & $10^3$ & 96 & 2.41$\,\cdot 10^{-1}$ & 1.43$\,\cdot 10^{-1}$ \\ 
        & $\cdot$ & & $10^{-4}$ & 3.35$\, \cdot 10^{-2}$ & 6.60$\,\cdot 10^{-4}$ & 3.69$\,\cdot 10^{-4}$ & & $10^4$ & 956 & 6.87$\,\cdot 10^{-2}$ & 3.68$\,\cdot 10^{-2}$ \\ \hline
        \rule{0pt}{2.5ex} 2 & $9.65 \, \cdot 10^{-2}$ & & $10^{-2}$ &  3.29$\,\cdot 10^{-1}$ & 9.96$\,\cdot 10^{-2}$ &  1.17$\,\cdot 10^{-2}$ & & $10^{2}$ & 13 & 2.48$\,\cdot 10^{1}$ & 4.56$\,\cdot 10^{0}$   \\
         & $\cdot$ & & $10^{-3}$ &  2.62$\,\cdot10^{-1}$ & 2.03$\,\cdot 10^{-3}$ & 4.18$\,\cdot 10^{-4}$ & & $10^{3}$ & 115 & 3.89$\,\cdot 10^{0}$ & 7.69$\,\cdot 10^{-1}$   \\
          & $\cdot$ & & $10^{-4}$ &  2.20$\,\cdot10^{-1}$ & 1.13$\,\cdot 10^{-2}$ &  1.18$\,\cdot10^{-3}$ & & $10^{4}$ & 1025 & 9.07$\,\cdot 10^{-1}$ & 2.42$\,\cdot 10^{-1}$ 
    \end{tabular}
    \caption{\label{tab: runtime all trivariate moments}\small \textit{Run times (RT) in seconds and errors (MAE, MRE) for first $(n=1)$ and second $(n=2)$ order moments: performance of the benchmark ODE-based method relative to FD and MC.  
    }}
}
\end{table}

To assess whether the effects observed in the previous experiment hold in general,  we have performed experiments with a set of intrinsically different parameter settings.
In these experiments, we study run times and errors, while we fix the `width parameter' at $h = 10^{-3}$ and number of simulation runs at $m = 10^3$.
Since varying each entry in each of the vectors and matrices would lead to a large set of instances, we decided to focus on altering only the parameters directly pertaining to $\lambda_1(\cdot)$ and $Q_1(\cdot)$, while respecting Assumption~\ref{ass: stability condition}; note that the effect will propagate to other components due to cross-excitation.
Table~\ref{tab: runtime all trivariate moments diff parameters} shows the resulting run times and errors.
The main conclusion is that the experiments reveal that, uniformly across all instances, the benchmark ODE-based method remains the fastest, with a run time that is hardly affected by the parameters chosen.
We note that increasing the value of $\ee[B_{11}]$ or decreasing the value of $\alpha_1$ results in the system approaching the boundary of the stability condition in Assumption~\ref{ass: stability condition}, thus leading to larger relative errors.

\begin{table}[h!]
{\small
    \centering
    \begin{tabular}{r @{\hspace{0.8\tabcolsep}} l c c c c c c c c c} 
     & & BM & \multicolumn{4}{c}{FD} & \multicolumn{4}{c}{MC} \\
    \cline{3-3} \cline{5-7} \cline{9-11}
     \multicolumn{2}{r}{Parameter} & RT & & RT & MAE & MRE & & RT & MAE & MRE \\ \hline
    \rule{0pt}{2.5ex} $\ol{\lambda}_1$ &$= 3$ & 1.13$\,\cdot 10^{-1}$ & & 3.04$\,\cdot 10^{-1}$ & 2.48$\,\cdot 10^{-2}$ & 7.44$\,\cdot 10^{-4}$ & & 286 & 7.19$\,\cdot 10^{0}$ & 3.34 $\,\cdot 10^{-1}$ \\
    $\ol{\lambda}_1$ & $= 5$ & 1.20$\,\cdot 10^{-1}$ & & 3.10$\,\cdot 10^{-1}$ & 7.68$\,\cdot 10^{-2}$ & 1.02$\,\cdot 10^{-3}$ & & 432 & 1.47$\,\cdot 10^{1}$ & 3.98$\,\cdot 10^{-1}$ \\
    $\ol{\lambda}_1$ & $= 10$ & 1.36$\,\cdot 10^{-1}$ & & 3.32 $\,\cdot 10^{-1}$ & 5.37$\,\cdot 10^{-1}$ & $2.09\,\cdot 10^{-3}$ & & 812 & 1.18$\,\cdot 10^{2}$ & 6.19$\,\cdot 10^{-1}$ \\ \hline
    $\ee B_{11}$ & $= 1$ & 1.04$\,\cdot 10^{-1}$ & &  3.05$\,\cdot 10^{-1}$ & 1.02$\,\cdot 10^{-2}$ & 1.14$\,\cdot 10^{-3}$ & & 118 & 5.48$\,\cdot 10^{0}$ & 7.55$\,\cdot10^{-1}$ \\
    $\ee B_{11}$ & $= 1.3$ & 1.06$\,\cdot 10^{-1}$ & & 3.07$\,\cdot 10^{-1}$ & 1.52$\,\cdot 10^{-2}$ & 1.07$\,\cdot 10^{-3}$ & & 131 & 1.12$\,\cdot 10^{1}$ & 9.38$\,\cdot 10^{-1}$ \\
    $\ee B_{11}$ & $= 1.6$ & 1.02$\,\cdot 10^{-1}$ & & 3.08$\,\cdot 10^{-1}$ & $1.19\,\cdot 10^{-1}$ & 2.62$\,\cdot 10^{-3}$ & & 152 & 8.72$\,\cdot 10^{1}$ & 2.38$\,\cdot 10^{0}$ \\ \hline
    $\alpha_1$ & $= 1$ & 1.03$\,\cdot 10^{-1}$ & &  2.97$\,\cdot 10^{-1}$ & 7.91$\,\cdot 10^{-3}$ & 5.11$\,\cdot 10^{-3}$ & & 130 & 1.29$\,\cdot 10^{1}$ & 1.09$\,\cdot 10^{0}$ \\
    $\alpha_1$ & $= 3$ & 1.11$\,\cdot 10^{-1}$ & &  3.27$\,\cdot 10^{-1}$ & 2.89$\,\cdot 10^{-3}$ & 1.92$\,\cdot 10^{-3}$ & & 93 & 5.85$\,\cdot 10^{0}$ & 2.04$\,\cdot 10^{0}$ \\
    $\alpha_1$ & $= 10$ & 1.73$\,\cdot 10^{-1}$ & &  5.45$\,\cdot 10^{-1}$ & 1.14$\,\cdot 10^{-2}$ & 8.89$\,\cdot 10^{-3}$ & & 80 & 2.29$\,\cdot 10^{0}$ & 9.83$\,\cdot 10^{-1}$ \\ \hline
    $\mu_1$ & $= 0.5$ & 1.01$\,\cdot 10^{-1}$ & &  3.01$\,\cdot 10^{-1}$ & 2.49$\,\cdot 10^{-3}$ & 5.05$\,\cdot 10^{-4}$ & & 98 & 3.84$\,\cdot 10^{0}$ & 6.30$\,\cdot 10^{-1}$ \\
    $\mu_1$ & $= 2$ & 1.06$\,\cdot 10^{-1}$ & &  3.04$\,\cdot 10^{-1}$ & 2.23$\,\cdot 10^{-3}$ & 5.82$\,\cdot 10^{-4}$ & & 97 & 4.98$\,\cdot 10^{0}$ & 1.25$\,\cdot 10^{0}$ \\
    $\mu_1$ & $= 5$ & 1.26$\,\cdot 10^{-1}$ & &  3.18$\,\cdot 10^{-1}$ & 2.42$\,\cdot 10^{-3}$ & 1.12$\,\cdot 10^{-3}$ & & 99 & 5.22$\,\cdot 10^{0}$ & 1.45$\,\cdot 10^{0}$
    \end{tabular}
    \caption{\label{tab: runtime all trivariate moments diff parameters}\small \textit{Run times (RT) in seconds and errors (MAE, MRE) for combined first and second order moments in the trivariate setting: effect of parameter changes of the benchmark ODE-based method relative to FD and MC.  
    }}
}
\end{table}

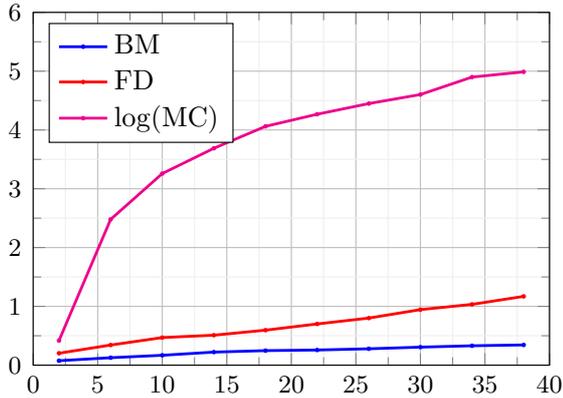
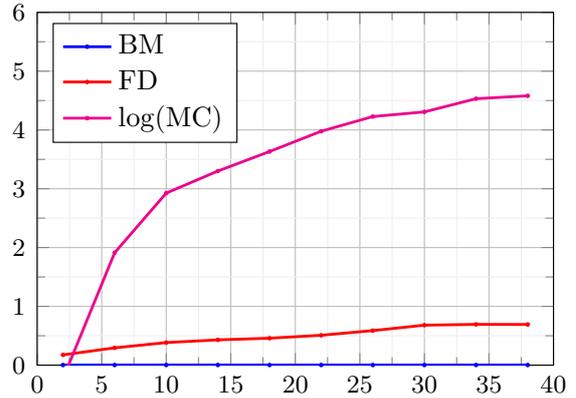
\begin{figure}[htbp]
\centering

\begin{subfigure}[t]{0.48\textwidth}
\centering
\resizebox{\textwidth}{!}{
 \pgfplotstableread{F1.txt}{\table}
 \begin{tikzpicture}
    \begin{axis}[
        xmin = 0, xmax = 40,
        ymin = 0, ymax = 6,
        xtick distance = 5,
        ytick distance = 1,
        grid = both,
        minor tick num = 1,
        major grid style = {lightgray},
        minor grid style = {lightgray!25},
        width = \textwidth,
        height = 0.75\textwidth,
        legend cell align = {left},
        legend pos = north west
    ]
        \addplot[blue, mark = *, mark size = 0.3pt, line width = 1pt] table [x = {x}, y = {a}] {\table};
        \addplot[red, mark = *, mark size = 0.3pt, line width = 1pt] table [x = {x}, y = {b}] {\table};
        \addplot[magenta, mark = *, mark size = 0.3pt, line width = 1pt] table [x = {x}, y = {c}] {\table};
        \addlegendentry{BM}
        \addlegendentry{FD}
        \addlegendentry{$\log ({\rm MC})$}
    \end{axis}
 \end{tikzpicture}
}
\caption{Trivariate setting: up to 2nd-order moments}
\label{fig:runtime-tri}
\end{subfigure}
\hfill
\begin{subfigure}[t]{0.48\textwidth}
\centering
\resizebox{\textwidth}{!}{
 \pgfplotstableread{F2.txt}{\table}
 \begin{tikzpicture}
    \begin{axis}[
        xmin = 0, xmax = 40,
        ymin = 0, ymax = 6,
        xtick distance = 5,
        ytick distance = 1,
        grid = both,
        minor tick num = 1,
        major grid style = {lightgray},
        minor grid style = {lightgray!25},
        width = \textwidth,
        height = 0.75\textwidth,
        legend cell align = {left},
        legend pos = north west
    ]
        \addplot[blue, mark = *, mark size = 0.3pt, line width = 1pt] table [x = {x}, y = {a}] {\table};
        \addplot[red, mark = *, mark size = 0.3pt, line width = 1pt] table [x = {x}, y = {b}] {\table};
        \addplot[magenta, mark = *, mark size = 0.3pt, line width = 1pt] table [x = {x}, y = {c}] {\table};
        \addlegendentry{BM}
        \addlegendentry{FD}
        \addlegendentry{$\log ({\rm MC})$}
    \end{axis}
 \end{tikzpicture}
}
\caption{Bivariate setting: up to 3rd-order moments}
\label{fig:runtime-bi}
\end{subfigure}

\caption{Run times of the BM, FD, and MC methods for computing moments. (a) Trivariate setting (up to second order). (b) Bivariate setting (up to third order).}
\label{fig:runtime-comparison}
\end{figure}

We also studied the effect of varying the time parameter $t$ on the run times.
Recall that the FD method uses the (conditional) joint transform, where the latter requires solving systems of ODEs.
Figure~~\ref{fig:runtime-bi} shows that the run times of the ODE-based method and the FD method scale effectively linearly with $t$, with the ODE method having the smaller slope.
The run time for MC increases superlinearly; we took its logarithm to be able to show it in the same plot. This superlinear behavior is an inherent consequence of the branching structure underlying the Hawkes process (relied upon in Ogata's algorithm).
We note that this qualitative behavior is observed for all choices of parameters that we considered.

\subsection{Bivariate}

In this subsection, we compute for the bivariate setting ($d=2$) the transient moments of $\bm{Q}(t) = (Q_1(t), Q_2(t))$ and $\bm{\lambda}(t) = (\lambda_1(t), \lambda_2(t))$, of orders $n=1,2,3$. Again we take $t=5$, but later assess the effect of the choice of $t$. 
As before the random marks are exponentially distributed, i.e., for $i,j\in\{1,2\}$, we set $B_{ij} \sim \text{Exp}(b_{ij})$ for some $b_{ij} > 0$, with independence between the $B_{ij}$. 
The parameters are
\begin{align*}
    \overline{\bm{\lambda}} = \begin{bmatrix}
    0.5 \\
    0.5
    \end{bmatrix},
    \quad   \ee\bm{B} = \begin{bmatrix}
    1.5 & 0.5 \\
    0.75 & 1.25 \\
    \end{bmatrix},
    \quad \bm{D}_\alpha = \begin{bmatrix}
    3 & 0 \\
    0 & 2 \\
    \end{bmatrix},
    \quad \bm{D}_\mu = \begin{bmatrix}
    1 & 0 \\
    0 & 2
    \end{bmatrix}.
\end{align*}
It can be verified that the stability condition of Eqn.~\eqref{eq: general stability condition}, which in this bivariate case reads
$
    (\alpha_1 - \ee[B_{11}])(\alpha_2 - \ee[B_{22}]) > \ee[B_{12}]\ee[B_{21}]$, is met.

We compute all the first, second and third order moments, i.e., all entries of the stacked vectors $\bm{\Psi}_t^{(1)}$, $\bm{\Psi}_t^{(2)}$, and $\bm{\Psi}_t^{(3)}$.
We use the main result of Section~\ref{sec: nested block matrices}, namely Proposition~\ref{prop: stack of stacked Psi Q lambda solution}, as our benchmark.
This result exploits the block-matrix structure, by which we can simultaneously compute moments of multiple orders, thus greatly increasing the computational performance.
As it turns out, the output hardly differs from what is obtained by the ODE-based approach of the previous subsection (i.e., a difference in the order of $10^{-8}$).
The difference in computational effort, however, is substantial: for this instance the block-matrix method is about $200$ times faster.

Table~\ref{tab: runtime all bivariate moments} shows that the BM method is much faster than FD and MC, especially for second and third order moments (with again the number of simulation runs being $m = 10^3$).
We also see that the absolute and relative errors of FD and MC significantly grow as the order of moments increase.
Particularly for the third order moments, the poor stability of the FD method significantly degrades the performance, as can be seen by the variability of the error when changing the precision parameter $h$.

\begin{table}[h]
{\small
    \centering
    \begin{tabular}{c c c c c c c c c c c c } 
         & BM & & \multicolumn{4}{c}{FD} & & \multicolumn{4}{c}{MC} \\
         \cline{2-2} \cline{4-7} \cline{9-12}
        $n$ & RT & & $h$ & RT & MAE & MRE &  & $m$ & RT  & MAE & MRE \\ \hline
        1 & 4.77$\,\cdot 10^{-4}$  & & $10^{-2}$ & 4.42$\,\cdot 10^{-2}$  & 2.55$\,\cdot 10^{-3}$ & 1.12$\,\cdot 10^{-3}$ & & $10^2$ & 5 & 7.16$\,\cdot 10^{-1}$ & 3.32$\,\cdot 10^{-1}$  \\
        & $\cdot$ &  & $10^{-3}$ & 3.31$\, \cdot 10^{-2}$ &  8.55$\,\cdot 10^{-5}$ &  4.61$\,\cdot10^{-5}$ &  & $10^3$ & 62 & 3.19$\,\cdot 10^{-1}$ & 1.49$\,\cdot 10^{-1}$ \\
        & $\cdot$ & & $10^{-4}$ & 2.97$\, \cdot 10^{-2}$ & 4.38$\,\cdot 10^{-4}$ & 2.64$\,\cdot 10^{-4}$ & & $10^4$ & 589 & 1.69$\,\cdot 10^{-2}$ & 7.30$\,\cdot 10^{-3}$ \\ \hline
        2 & $5.61 \, \cdot 10^{-4}$ & & $10^{-2}$ &  8.12$\,\cdot 10^{-2}$ & 2.70$\,\cdot 10^{-1}$ &  2.45$\,\cdot 10^{-2}$ & & $10^{2}$ & 6 & 2.98$\,\cdot 10^{1}$ & 2.76$\,\cdot 10^{0}$   \\
        & $\cdot$ & & $10^{-3}$ &  6.42$\,\cdot10^{-2}$ & 2.03$\,\cdot 10^{-3}$ & 4.18$\,\cdot 10^{-4}$ & & $10^{3}$ & 67 & 8.01$\,\cdot 10^{0}$ & 9.71$\,\cdot 10^{-1}$   \\
        & $\cdot$ & & $10^{-4}$ &  5.73$\,\cdot10^{-2}$ & 1.14$\,\cdot 10^{-2}$ &  1.38$\,\cdot10^{-3}$ & & $10^{4}$ & 631 & 5.36$\,\cdot 10^{0}$ & 6.92$\,\cdot 10^{-1}$  \\ \hline
        3 &  9.26$ \, \cdot 10^{-4}$ & & $10^{-2}$ &  2.49$\,\cdot 10^{-1}$ & 2.23$\,\cdot 10^{2}$ &  1.77$\,\cdot 10^{0}$ & & $10^{2}$ & 7 & 9.57$\,\cdot 10^{2}$ & 1.09 $\,\cdot 10^{1}$   \\
        & $\cdot$ & & $10^{-3}$ &  2.07$\,\cdot10^{-1}$ & 2.99$\,\cdot 10^{2}$ & 5.61$\,\cdot 10^{1}$ & & $10^{3}$ & 69 & 2.21$\,\cdot 10^{2}$ & 2.87 $\,\cdot 10^{0}$   \\
        & $\cdot$ & & $10^{-4}$ &  1.72$\,\cdot10^{-1}$ & 1.34$\,\cdot 10^{6}$ &  3.34$\,\cdot10^{4}$ & & $10^{4}$ & 639 & 3.98$\,\cdot 10^{1}$ & 4.47$\,\cdot 10^{-1}$
    \end{tabular}
    \caption{\label{tab: runtime all bivariate moments}\small \textit{Run times (RT) in seconds and errors (MAE, MRE) for first $(n=1)$, second $(n=2)$, and third $(n=3)$ order moments in the bivariate setting: comparison of the benchmark block-matrix method relative to FD and MC.
    }}
}
\end{table}

Figure~\ref{fig:runtime-tri} shows the effect of the time parameter $t$.
As before, the run time of FD scales linearly with $t$, and that of  MC superlinearly.
This should be contrasted with the attractive feature of the block-matrix method that its run time does not depend on $t$.

We now consider the block-matrix stationary moments. 
The first and second order stationary moments can be immediately obtained from the results of Appendix~\ref{sec: stationary moments d-dim}, by solving the associated Sylvester matrix equations.
Obvious alternatives when not knowing these stationary moments, amount to picking a `large' value of $t$ in the FD and MC methods.
These alternative methods have two intrinsic drawbacks: (1)~run times increase in $t$, and (2)~we do not know \textit{a priori} what value of $t$ guarantees that the error made is sufficiently small.
As we already saw that MC is typically outperformed by FD, we focus on FD only. 
A pragmatic way to select a sufficiently large value of $t$, is to compute the FD-based approximation of first and second order transient moments for successive \textit{integer} values of $t$, until the difference of the respective MREs is smaller than some given threshold precision level $\epsilon$. 
We compare the resulting approximation to our benchmark, i.e., the values obtained by solving the Sylvester matrix equations, so as to quantify the error made.

Table~\ref{tab: runtime transient - stationary bivariate} presents the results for the FD method with precision level $\epsilon = 0.01$.
We have performed the above procedure for different choices of parameters, where the parameter that is altered is given in the table.
Note that the benchmark method is exact and provides near-instant response. 
Observe that for specific sets of parameters there is a substantial effect on the value of $t$ (i.e., the value of the time parameter at which the procedure terminates), the run time, and the MRE, in particular when the parameters are close to the boundary of the stability condition in Assumption~\ref{ass: stability condition}, e.g., $\ee[B_{11}]=2.25$ or $\alpha_1=2.1$.



\begin{table}[hh]
{\small
    \centering
    \begin{tabular}{r @{\hspace{0.8\tabcolsep}} l c c c c } 
     & & & \multicolumn{3}{c}{FD} \\
    \cline{4-6} 
     \multicolumn{2}{r}{Parameter} & & $t$ & RT & MRE  \\ \hline
    \rule{0pt}{2.5ex} $\ol{\lambda}_1$ &$= 3$ & & 18 & 3.25$\,\cdot 10^{0}$ & 2.17$\,\cdot 10^{-2}$  \\
    $\ol{\lambda}_1$ &$= 10$ & & 18 & 3.38$\,\cdot 10^{0}$ & 1.94$\,\cdot 10^{-2}$  \\ \hline
    \rule{0pt}{2.5ex} $\ee B_{11}$ &$= 0.5$ & & 14 & 2.75$\,\cdot 10^{0}$ & 1.51$\,\cdot 10^{-2}$  \\
    $\ee B_{11}$ &$= 2.25$ & & 43 & 1.08$\,\cdot 10^{1}$ & 6.87$\,\cdot 10^{-2}$  \\ \hline
    \rule{0pt}{2.5ex} $\alpha_1$ &$= 2.1$ & & 86 & 2.89$\,\cdot 10^{1}$ & 1.58$\,\cdot 10^{-1}$  \\
    $\alpha_1$ &$= 5$ & & 15 & 3.77$\,\cdot 10^{0}$ & 1.55$\,\cdot 10^{-2}$  \\ \hline
    \rule{0pt}{2.5ex} $\mu_1$ &$= 0.5$ & & 19 & 3.97$\,\cdot 10^{0}$ & 2.11$\,\cdot 10^{-2}$  \\
    $\mu_1$ &$= 5$ & & 18 & 3.38$\,\cdot 10^{0}$ & 2.84$\,\cdot 10^{-2}$  \\
    \end{tabular}
    \caption{\label{tab: runtime transient - stationary bivariate}\small \textit{Run times (RT) in seconds until first and second order transient moments approximate stationary moments in the bivariate setting: FD method with precision level $\epsilon = 0.01$.
    }}
}
\end{table}

In Appendix \ref{appendix: numerics} we present a numerical evaluation of the objects featured in Section~\ref{sec: characterization}.

\section{Concluding Remarks} \label{sec: conclusion}

This paper has studied multivariate Hawkes-fed Markovian infinite-server queues, which can be alternatively interpreted as population processes.
Our objective was to devise accurate and efficient algorithms to compute transient and stationary moments. 
We succeeded in doing so, heavily relying on having access to the joint transform of the Hawkes intensity process and the population process.
When the multivariate Hawkes process is of general dimension $d$, this transform is expressed in terms of systems of ODEs, allowing for the computation of joint moments.
This includes joint moments where the components pertain to the same as well as to different points in time, thus also covering the evaluation of the processes' autocovariance functions. 
We then proceeded by deriving expressions for the first and second order, transient and stationary moments for the $d$-dimensional processes.
Next, in the $2$-dimensional setting we derived a recursive procedure, revealing a block-matrix structure for the computation of moments of any order.
Our numerical experiments show that our approach outperforms existing alternatives: it produces highly accurate results with computation times that are virtually negligible.

This paper considering Markovian multivariate Hawkes processes and associated population processes, we conclude this section by a brief discussion of potential extensions to more general classes of processes. 
In the first place,  where in this work we exclusively focus on exponential decay functions, one could work with more general non-increasing and integrable decay functions. When doing so, one leaves the Markovian setting, rendering the results of this paper not applicable. 
Instead, the process may be analyzed through the cluster representation, first described in \cite{HO74}, where the Hawkes process is described as a Poisson cluster process. 
This approach has been followed in \cite{KLM21} to study distributional properties in a multivariate setting, including the multivariate Hawkes process as well as the associated population process.
Another extension concerns the nonlinear case of e.g.\ \cite{BM96,Z13}, entailing that the Hawkes process is not even a Poisson cluster process and therefore requires entirely different analysis techniques.

{\small 
\begin{spacing}{1.05}

\end{spacing}
}

\appendix

\section{Proofs of Theorems \ref{thm: zeta characterization} and \ref{thm: joint transform characterization t, t+tau}} \label{appendix: proofs}

\begin{proof}[Proof of Theorem \ref{thm: zeta characterization}]
The proof is comprised of a number of steps. 
First, we use the Markov property on the distribution function of the joint process $(\bm{Q}(t),\bm{\lambda}(t))$, next we take partial derivatives to obtain an expression for the density. Then, we consecutively apply the Laplace and $\bm{z}$-transform to obtain a PDE.
Finally, we use the method of characteristics to obtain a system of ODEs.
We describe below the main steps; some lengthy technical computations have been put into Appendix \ref{appendix: computations}.

We note that the probabilities considered in this proof are conditional on the value of the processes at time $t_0$, i.e. $\bm{Q}(t_0) = \bm{Q}_0$ and $\bm{\lambda}(t_0) = \bm{\lambda}_0$.
To start off, for $t \in \rr_+$, $\bm{k} \in \nn_+^d$ and $\bm{\nu} \in \rr_+^d$, set
\begin{align}
\label{eq: probability functions}
    F(t,\bm{\nu},\bm{k}) = \pp( \bm{\lambda}(t) \leqslant \bm{\nu}, \bm{Q}(t) = \bm{k}), 
    \quad
    \frac{\partial F(t,\bm{\nu},\bm{k})}{\partial \bm{\nu}} =
    \left(
    \frac{\partial F(t,\bm{\nu},\bm{k})}{\partial \nu_1} ,\cdots,
    \frac{\partial F(t,\bm{\nu},\bm{k})}{\partial \nu_d}
    \right)^{\top}.
\end{align}
Also define 
\begin{align}
\label{eq: pdf Q and lambda}
    f(t,\bm{\nu},\bm{k}) = \frac{\partial^d F(t,\bm{\nu},\bm{k})}{\partial \nu_1 \cdots \partial \nu_d},
\end{align}
as the joint density of $(\bm{Q}(t),\bm{\lambda}(t))$.
For some $\delta> 0$, consider the probability
\begin{align}
\label{eq: conditional probability Q,L}
    F(t+\delta, \, \bm{\nu} - \bm{\alpha} \odot (\bm{\nu}- \overline{\bm{\lambda}})\delta \, , \bm{k}),
\end{align}
where the interpretation of the term $\bm{\nu} -\bm{\alpha} \odot (\bm{\nu}- \overline{\bm{\lambda}})$ is a decay factor, in the sense that for small $\delta$, no new arrival of a point in $(t,t+\delta]$ makes the intensity $\bm{\lambda}(\cdot)$ decay with rate $\bm{\alpha}$ back to the mean reversion level $\overline{\bm{\lambda}}$.
To compute this probability, we apply the Markov property to Eqn.~\eqref{eq: conditional probability Q,L}, which leaves us to consider the possibilities to get in the state of exactly $\bm{k}$ active points and intensity equal to $\bm{\nu} - \bm{\alpha} \odot (\bm{\nu}- \overline{\bm{\lambda}})$.
There are three distinct ways to get to this state at time $t + \delta$ from time $t$: we have exactly $\bm{k}$ active points with no arrivals or departures; we have $\bm{k} - \bm{e}_j$ active points and exactly one arrival in component $j$; or we have $\bm{k}+\bm{e}_j$ active points and one departure in component $j$.
This yields, up to $o(\delta)$ terms, that
\begin{align*}
    &F(t+\delta, \, \bm{\nu} - \bm{\alpha} \odot (\bm{\nu}- \overline{\bm{\lambda}})\delta, \, \bm{k}) \\
    &= \sum_{j=1}^d \int_0^{\nu_d} \cdots \int_0^{\nu_1} \delta y_j  
    f(t, \bm{y},\bm{k} - \bm{e}_j) \pp(\bm{B}_j \leqslant \bm{\nu - y}) \mathrm{d}y_1\cdots \mathrm{d}y_d \\
    & \quad +  \sum_{j=1}^d (k_j + 1)\delta\mu_j  F(t, \bm{\nu}, \bm{k + e}_j) \\
    & \quad +  \int_0^{\nu_d} \cdots \int_0^{\nu_1} (1 - \sum_{j=1}^d\delta \mu_j k_j - \sum_{j=1}^d \delta y_j )f(t,\bm{y},\bm{k})\mathrm{d}y_1 \cdots \mathrm{d} y_d + o(\delta).
\end{align*}
Subtracting $F(t,\bm{k},\bm{\lambda})$ on both sides, dividing by $\delta$, and taking $\delta \downarrow 0$ yields 
\begin{align*}
    &\frac{\partial F(t,\bm{\nu},\bm{k})}{\partial t} - \big(\bm{\alpha} \odot (\bm{\nu}- \overline{\bm{\lambda}}) \big)^\top \frac{\partial F(t,\bm{\nu},\bm{k})}{\partial \bm{\nu}} \\
    &= \sum_{j=1}^d \int_0^{\nu_d} \cdots \int_0^{\nu_1}y_j 
    f(t, \bm{y},\bm{k} - \bm{e}_j) \pp(\bm{B}_j \leqslant \bm{\nu- y}) \mathrm{d}y_1\cdots \mathrm{d}y_d \\
    & \quad + \sum_{j=1}^d (k_j + 1)\mu_j F(t, \bm{\nu}, \bm{k + e}_j)  - \sum_{j=1}^d\int_0^{\nu_d} \cdots \int_0^{\nu_1} (\mu_j k_j + y_j)  f(t,\bm{y},\bm{k})\mathrm{d}y_1 \cdots \mathrm{d}y_d,
\end{align*}
where the left-hand side follows from the definition of the directional derivative.
Next, we take the partial derivatives with respect to $\nu_1,\dots,\nu_d$, so as to rewrite above equation in terms of the probability density function $f(t,\bm{\nu},\bm{k})$. 
By the definitions of $F$ and $f$ given in Eqns.~\eqref{eq: probability functions} and \eqref{eq: pdf Q and lambda}, and using Leibniz' integral rule on the integral terms, we obtain
\begin{align}
\label{eq: PDE equation in terms of f}
    &\frac{\partial f(t,\bm{\nu},\bm{k})}{\partial t} - \sum_{j=1}^d \alpha_j \frac{\partial}{\partial \nu_j} \nu_j f(t,\bm{\nu},\bm{k})
    + \sum_{j=1}^d \alpha_j\overline{\lambda}_j \frac{\partial f(t,\bm{\nu},\bm{k})}{\partial \nu_j} \notag\\
    &= \sum_{j=1}^d \int_0^{\nu_d} \cdots \int_0^{\nu_1}y_j f(t, \bm{y},\bm{k} - \bm{e}_j)
    \frac{\partial^d }{\partial \nu_1 \cdots \partial \nu_d} \pp(\bm{B}_j \leqslant \bm{\nu - y}) \mathrm{d}y_1\cdots \mathrm{d}y_d \\
    & \quad + \sum_{j=1}^d \Big( f(t, \bm{\nu}, \bm{k + e}_j)(k_j + 1)\mu_j 
    - f(t,\bm{\nu},\bm{k}) (k_j \mu_j + \nu_j) \Big).\notag
\end{align}
Denote the $d$-dimensional Laplace transform with respect to $\bm{\nu}$ by
\begin{align*}
    \xi(t,\bm{s},\bm{k}) := \cL(f(t,\bm{\nu},\bm{k}))(\bm{s}) = \int_0^\infty \cdots \int_0^\infty e^{-\bm{s}^\top \bm{\nu}} f(t,\bm{\nu},\bm{k})\mathrm{d}\nu_1 \cdots \mathrm{d}\nu_d.
\end{align*}
Taking the Laplace transform of Eqn.~\eqref{eq: PDE equation in terms of f} yields
\begin{align} \label{eq: PDE equation Laplace transformed}
    &\frac{\partial \xi(t,\bm{s},\bm{k})}{\partial t} 
    + \sum_{j=1}^d \alpha_j s_j \frac{\partial \xi(t,\bm{s},\bm{k})}{\partial s_j} 
    +  \sum_{j=1}^d \alpha_j \overline{\lambda}_js_j \xi(t,\bm{s},\bm{k}) \\
    &=  \sum_{j=1}^d \Big(- \frac{\partial \xi(t,\bm{s},\bm{k -e}_j)}{\partial s_j} \beta_j(\bm{s})
    + (k_j + 1)\mu_j \xi(t, \bm{s}, \bm{k + e}_j) - k_j\mu_j \xi(t,\bm{s},\bm{k}) + \frac{\partial \xi(t,\bm{s},\bm{k})}{\partial s_j}\Big); \notag
\end{align}
see Appendix \ref{appendix:transform computations} for the term-by-term derivation. 
The computations boil down to applying integration by parts, convolution arguments and the properties of $F$ and $f$.
Rewriting the expression in such a way that all derivatives end up on one side,
\begin{align}
\label{eq: PDE equation in terms of xi}
\begin{split}
    &\frac{\partial \xi(t,\bm{s},\bm{k})}{\partial t} 
    + \sum_{j=1}^d (\alpha_j s_j - 1)\frac{\partial \xi(t,\bm{s},\bm{k})}{\partial s_j} 
    + \sum_{j=1}^d \frac{\partial \xi(t,\bm{s},\bm{k -e}_j)}{\partial s_j} \beta_j(\bm{s}) \\
    &= \sum_{j=1}^d \Big( (k_j + 1)\mu_j \xi(t, \bm{s}, \bm{k + e}_j)
    - k_j\mu_j \xi(t,\bm{s},\bm{k}) - \alpha_j  \overline{\lambda}_j s_j \xi(t,\bm{s},\bm{k}) \Big).
    \end{split}
\end{align}
Next, we rewrite equation (\ref{eq: PDE equation in terms of xi}) by taking the $\bm{z}$-transform, which gives us the joint transform introduced in Eqn.~\eqref{eq: def joint transform Q L condition time t0}, i.e.,
\begin{align*}
    \zeta_{t_0}(t,\bm{s},\bm{z})
    = \sum_{k_1=0}^\infty \cdots \sum_{k_d=0}^\infty z_{1}^{k_1}\cdots z_d^{k_d} \xi(t,\bm{s},\bm{k})
    = \ee_{t_0} \Big[ e^{-\bm{s}^\top \bm{\lambda}(t)} \prod_{i=1}^d z_i^{Q_i(t)} \Big],
\end{align*}
yielding
\begin{align}
\label{eq: PDE equation in terms of zeta}
\begin{split}
    \frac{\partial \zeta_{t_0}(t,\bm{s},\bm{z})}{\partial t} 
    &+ \sum_{j=1}^d \big(\alpha_js_j + z_j \beta_j(\bm{s}) - 1\big)
    \frac{\partial \zeta_{t_0}(t,\bm{s},\bm{z})}{\partial s_j} 
    + \sum_{j=1}^d \big(\mu_j(z_j -1)\big)
    \frac{\partial \zeta_{t_0}(t,\bm{s},\bm{z})}{\partial z_j} \\
    &= - \zeta_{t_0}(t,\bm{s},\bm{z}) \sum_{j=1}^d \alpha_j \overline{\lambda}_j s_j,
\end{split}
\end{align}
where we added the subscript $t_0$ to emphasize the dependence on this initial time value.
We refer to Appendix \ref{appendix:transform computations} for the term-by-term derivation.

By employing the method of characteristics, we can rewrite the PDE in Eqn.~\eqref{eq: PDE equation in terms of zeta} into a system of ODEs. 
To that end, consider a curve in $\rr^{2d}$ parameterized by $(\hat{\bm{s}}(u),\hat{\bm{z}}(u))$ as a function of $u$, where $t_0\leqslant u \leqslant t$, which terminates at the set of parameters $(\bm{s},\bm{z})$, i.e. $(\hat{\bm{s}}(t), \hat{\bm{z}}(t)) = (\bm{s},\bm{z})$.
Since we have a first-order PDE, we easily obtain the characteristic system of ODEs by
\begin{align}
\label{eq: characteristic system ODEs}
\begin{split}
    \frac{\mathrm{d} \hat{s}_j(u)}{\mathrm{d} u} &= \alpha_j \hat{s}_j(u) + \hat{z}_j(u) \beta_j(\hat{\bm{s}}(u)) - 1, \\
    \frac{\mathrm{d} \hat{z}_j(u)}{\mathrm{d} u}&= \mu_j(\hat{z}_j(u) -1), \\
\end{split}
\end{align}
for each $j\in[d]$.
For $\hat{z}_j(\cdot)$, the solution can be directly computed as
\begin{align*}
    \hat{z}_j(u) = 1 + C_j e^{u\mu_j},
\end{align*}
where $C_j$ is derived by the boundary condition $\hat{z}_j(t) = z_j$, yielding $C_j = (z_j -1)e^{-t\mu_j}$, and thus $\hat{z}_j(u) = 1 + (z_j - 1) e^{-\mu_j(t-u)}$.
Upon substituting the solution for $\hat{z}_j(u)$ in the equation of $\hat{s}_j(u)$ in \eqref{eq: characteristic system ODEs}, we obtain the ODE
\begin{align}
\label{eq: ODE s with terminal condition}
    -\frac{\mathrm{d} \hat{s}_j(u)}{\mathrm{d} u} + \alpha_j \hat{s}_j(u) + (1+ (z_j-1)e^{-\mu_j(t-u)}) \beta_j(\hat{\bm{s}}(u)) - 1 = 0,
\end{align}
with terminal condition $\hat{s}_j(t) = s_j$.
For later purposes, we rephrase the ODE in Eqn.~\eqref{eq: ODE s with terminal condition} into an ODE subject to an initial condition.
To that end, let $v = t_0 + t - u$ such that $t_0\leqslant v \leqslant t$ and the ODE for $\hat{s}_j(\cdot)$ becomes
\begin{align*}
    \frac{\mathrm{d} \hat{s}_j(t_0 +t-v)}{\mathrm{d} v} + \alpha_j \hat{s}_j(t_0 +t-v) + (1+ (z_j-1)e^{-\mu_j(v-t_0)} \beta_j(\hat{\bm{s}}(t_0 +t-v)) - 1 = 0.
\end{align*}
Upon defining $\tilde{s}_j(v) = \hat{s}_j(t_0 + t-v)$, we have that $\tilde{s}_j(\cdot)$ satisfies Eqn.~\eqref{eq: thm zeta_t0 joint transform eqn for hatZ and tildeS},
with initial condition $\tilde{s}_j(t_0) = \hat{s}_j(t) = s_j$.

We can now solve the characteristic equation of $\zeta_{t_0}(\cdot)$.
Since the original PDE in Eqn.~\eqref{eq: PDE equation in terms of zeta} is non-homogeneous, we know the solution $\zeta_{t_0}(t,\bm{s},\bm{z})$ is not constant along characteristics, but evolves according to the right-hand side of~\eqref{eq: PDE equation in terms of zeta}.
Therefore, if we set $\hat{\zeta}_{t_0}(u) := \zeta_{t_0}(u, \hat{\bm{s}}(u), \hat{\bm{z}}(u))$ to be the solution restricted to the characteristics, then $\hat{\zeta}_{t_0}(\cdot)$ satisfies
\begin{align*}
    \frac{\partial \hat{\zeta}_{t_0}(u)}{\partial u} = -\hat{\zeta}_{t_0}(u)\sum_{j=1}^d \alpha_j \overline{\lambda}_j \hat{s}_j(u),
\end{align*}
subject to the initial condition 
\begin{align*}
    \hat{\zeta}_{t_0}(t_0) = \zeta_{t_0}(t_0,\hat{\bm{z}}(t_0),\hat{\bm{s}}(t_0)) 
    = \prod_{j=1}^d \hat{z}_j(t_0)^{Q_{j,0}} \exp\big(-\hat{s}_j(t_0)\lambda_{j,0}\big).
\end{align*}
Solving this yields
\begin{align*}
    \hat{\zeta}_{t_0}(u) &= \prod_{j=1}^d \hat{z}_j(t_0)^{Q_{j,0}} \exp\Big( - \hat{s}_j(t_0)\lambda_{j,0} - \alpha_j\overline{\lambda}_j  \int_{t_0}^u \hat{s}_j(v)\mathrm{d}v\Big).
\end{align*}
Finally, the solution of the PDE is given at the endpoint of the characteristic $(t,\bm{s},\bm{z})$, implying that $\zeta_{t_0}(t,\bm{s},\bm{z}) = \hat{\zeta}_{t_0}(t)$.
Using the relation $\hat{s}_j(t_0) = \tilde{s}_j(t)$, we obtain
\begin{align*}
    \zeta_{t_0}(t,\bm{s},\bm{z}) 
    &= \prod_{j=1}^d \hat{z}_j(t_0)^{Q_{j,0}} \exp\Big( - \hat{s}_j(t_0)\lambda_{j,0}  
    - \alpha_j\overline{\lambda}_j \int_0^t \hat{s}_j(t_0 + t-u)\mathrm{d}u\Big) \\
    &=\prod_{j=1}^d \hat{z}_j(t_0)^{Q_{j,0}} \exp\Big( - \tilde{s}_j(t)\lambda_{j,0}
    - \alpha_j\overline{\lambda}_j \int_{t_0}^t \tilde{s}_j(u)\mathrm{d} u\Big),
\end{align*}
which finishes the proof.
\end{proof}

\begin{proof}[Proof of Theorem~\ref{thm: joint transform characterization t, t+tau}]
The proof follows by conditioning on $\bm{Q}(t)$ and $\bm{\lambda}(t)$, and then applying Theorem~\ref{thm: zeta characterization} and techniques from its proof.
By the tower property we have
\begin{align} \label{eq: proofstep: conditioned joint transform t, t+tau}
\begin{split}
    &\ee\Big[ \prod_{i=1}^d y_i^{Q_i(t)}e^{-r_i\lambda_i(t)} z_i^{Q_i(t+\tau)}e^{-s_i\lambda_i(t+\tau)}\Big] \\
    &= \ee\Big[ \prod_{i=1}^d y_i^{Q_i(t)}e^{-r_i\lambda_i(t)} \ee\Big[ \prod_{i=1}^d z_i^{Q_i(t+\tau)}e^{-s_i\lambda_i(t+\tau)} \, | \, \bm{Q}(t),\bm{\lambda}(t)\Big] \Big].
\end{split}
\end{align}
The inner expectation can be derived from Theorem~\ref{thm: zeta characterization} and is given by
\begin{align*}
    \ee\Big[ \prod_{i=1}^d z_i^{Q_i(t+\tau)}e^{-s_i\lambda_i(t+\tau)} \, | \, \bm{Q}(t),\bm{\lambda}(t)\Big] = \prod_{j=1}^d \hat{z}_j(t)^{Q_j(t)} e^{-\tilde{s}_j(t+\tau)\lambda_j(t)} \exp\Big(-\overline{\lambda}_j\alpha_j \int_t^{t+\tau} \tilde{s}_j(u)\ddiff u\Big),
\end{align*}
where $\hat{z}_j(\cdot)$ and $\tilde{s}_j(\cdot)$ satisfy Eqn.~\eqref{eq: thm joint transform at t and t+tau equations for hatZ and tildeS}.
Substituting this back into Eqn.~\eqref{eq: proofstep: conditioned joint transform t, t+tau} yields
\begin{align*}
    &\ee\Big[ \prod_{i=1}^d y_i^{Q_i(t)}e^{-r_i\lambda_i(t)}  \prod_{j=1}^d \hat{z}_j(t)^{Q_j(t)} e^{-\tilde{s}_j(t+\tau)\lambda_j(t)} \exp\Big(-\overline{\lambda}_j\alpha_j \int_t^{t+\tau} \tilde{s}_j(u)\ddiff u\Big) \Big] \\
    &= \ee\Big[ \prod_{j=1}^d (y_j\hat{z}_j(t))^{Q_j(t)}e^{-(r_j + \tilde{s}_j(t+\tau))\lambda_j(t)}\Big] 
    \prod_{j=1}^d \exp\Big(-\overline{\lambda}_j\alpha_j \int_t^{t+\tau} \tilde{s}_j(u)\ddiff u\Big) \\
    &=\zeta(t, \bm{y}\odot\hat{\bm{z}}(t), \bm{r} + \tilde{\bm{s}}(t+\tau))
    \prod_{j=1}^d \exp\Big(-\overline{\lambda}_j \alpha_j \int_t^{t+\tau} \tilde{s}_j(u)\ddiff u\Big).
\end{align*}
Applying Corollary~\ref{cor: zeta_0 characterization} to the $\zeta(\cdot)$ term on the right-hand side, specifically Eqn.~\eqref{eq: zeta characterization in terms of s(u)}, 
\begin{align*}
    \zeta(t, \bm{y}\odot\hat{\bm{z}}(t), \bm{r} + \tilde{\bm{s}}(t+\tau))
    = \prod_{j=1}^d
    \exp\Big(-\overline{\lambda}_j\tilde{r}_j(t) - \overline{\lambda}_j\alpha_j \int_0^t \tilde{r}_j(v)\ddiff v \Big),
\end{align*}
where $\tilde{r}_j(\cdot)$ satisfies, for each $j\in[d]$, the ODE
\begin{align*}
    \frac{\ddiff \tilde{r}_j(v)}{\ddiff v} + \alpha_j \tilde{r}_j(v) + \big(1 + (y_j\hat{z}_j(t)-1)e^{-\mu_j v}\big)\beta(\tilde{\bm{r}}(v)) - 1 &= 0.
\end{align*}
Since $\hat{z}_j(t) = 1 + (z_j - 1)e^{-\mu_j\tau}$, substituting this into the ODE for $\tilde{r}_j(\cdot)$ and rearranging terms finishes the proof.
\end{proof}

\section{Computations Related to Theorems 1 and 2} \label{appendix: computations}

\subsection{Transform computations} \label{appendix:transform computations}

In this section, we provide the details behind taking the Laplace and $\bm{z}$-transform of Eqns.~\eqref{eq: PDE equation in terms of f} and \eqref{eq: PDE equation Laplace transformed} respectively.
First we show the Laplace transform, denoted by $\cL(\cdot)$, of \eqref{eq: PDE equation in terms of f}, which we restate here for convenience
\begin{align*}
    &\frac{\partial f(t,\bm{\nu},\bm{k})}{\partial t} - \sum_{j=1}^d \alpha_j \frac{\partial}{\partial \nu_j} \nu_j f(t,\bm{\nu},\bm{k})
    + \sum_{j=1}^d \alpha_j\overline{\lambda}_j \frac{\partial f(t,\bm{\nu},\bm{k})}{\partial \nu_j} \notag\\
    &= \sum_{j=1}^d \int_0^{\nu_d} \cdots \int_0^{\nu_1}y_j f(t, \bm{y},\bm{k} - \bm{e}_j)
    \frac{\partial^d }{\partial \nu_1 \cdots \partial \nu_d} \pp(\bm{B}_j \leqslant \bm{\nu - y}) \mathrm{d}y_1\cdots \mathrm{d}y_d \\
    & \quad + \sum_{j=1}^d \Big( f(t, \bm{\nu}, \bm{k + e}_j)(k_j + 1)\mu_j 
    - f(t,\bm{\nu},\bm{k}) (k_j \mu_j + \nu_j) \Big),
\end{align*}
and we introduce the shorthand notation
\begin{align*}
    \xi(t,\bm{s},\bm{k}) &:= \cL(f(t,\bm{\nu},\bm{k}))(\bm{s}) \\&= \int_0^\infty \cdots \int_0^\infty e^{-\bm{s}^\top \bm{\nu}} f(t,\bm{\nu},\bm{k})\,\mathrm{d}\nu_1 \cdots \mathrm{d}\nu_d 
    \equiv \int_{\bm{0}}^{\bm{\infty}} e^{-\bm{s}^\top \bm{\nu}} f(t,\bm{\nu},\bm{k}) \,\ddiff \bm{\nu}.
\end{align*}
We consider the term-by-term derivation in the equation that yields the transformed version as given in Eqn.~\eqref{eq: PDE equation Laplace transformed}.
For the first term, it is clear that
\begin{align*}
    \cL\Big(\frac{\partial f(t,\bm{\nu},\bm{k})}{\partial t}\Big)(\bm{s}) = \frac{\partial \xi(t,\bm{k},\bm{s})}{\partial t}.
\end{align*}
For the second term, we need to show
\begin{align*}
    -\cL\Big( \sum_{j=1}^d \alpha_j \frac{\partial}{\partial \nu_j} \nu_j f(t,\bm{\nu},\bm{k})\Big)(\bm{s}) = \sum_{j=1}^d \alpha_j s_j \frac{\partial}{\partial s_j}\xi(t,\bm{k},\bm{s}).
\end{align*}
The argument of the Laplace transform is 
\begin{align*}
    \sum_{j=1}^d \alpha_j \frac{\partial}{\partial \nu_j} \nu_j f(t,\bm{\nu},\bm{k})
    =  \sum_{j=1}^d \alpha_j f(t,\bm{\nu},\bm{k})  + \sum_{j=1}^d \alpha_j \nu_j \frac{\partial f(t,\bm{\nu},\bm{k})}{\partial \nu_j}.
\end{align*}
We then use the linearity of the Laplace transform and apply integration by parts to obtain
\begin{align*}
    &\sum_{j=1}^d \alpha_j \cL\big(f(t,\bm{\nu},\bm{k})\big)(\bm{s}) + \sum_{j=1}^d \alpha_j \cL\Big(\nu_j \frac{\partial f(t,\bm{\nu},\bm{k})}{\partial \nu_j}\Big)(\bm{s}) \\
    &= \sum_{j=1}^d \alpha_j \int_{\bm{0}}^{\bm{\infty}} e^{-\bm{s}^\top \bm{\nu}} f(t,\bm{\nu},\bm{k})\ddiff \bm{\nu}
    + \sum_{j=1}^d \alpha_j \int_{\bm{0}}^{\bm{\infty}} e^{-\bm{s}^\top \bm{\nu}} \nu_j \frac{\partial f(t,\bm{\nu},\bm{k})}{\partial\nu_j} \ddiff \bm{\nu} \\
    &=  \sum_{j=1}^d \alpha_j \int_{\bm{0}}^{\bm{\infty}} e^{-\bm{s}^\top \bm{\nu}} f(t,\bm{\nu},\bm{k})\ddiff \bm{\nu}
    + \sum_{j=1}^d \alpha_j \big[ \nu_je^{-\bm{s}^\top \bm{\nu}} f(t,\bm{\nu},\bm{k})\big]^{\bm{\infty}}_{\bm{0}} \\
    &- \sum_{j=1}^d \alpha_j \int_{\bm{0}}^{\bm{\infty}} \big(1- \nu_j s_j\big)e^{-\bm{s}^\top \bm{\nu}} f(t,\bm{\nu},\bm{k}) \ddiff \bm{\nu} \\
    &= 0 + \sum_{j=1}^d \alpha_j s_j \int_{\bm{0}}^{\bm{\infty}} \nu_j  e^{-\bm{s}^\top \bm{\nu}} f(t,\bm{\nu},\bm{k})\ddiff \bm{\nu} \\
    &= \sum_{j=1}^d \alpha_j s_j \int_{\bm{0}}^{\bm{\infty}} - \frac{\partial }{\partial s_j} e^{-\bm{s}^\top \bm{\nu}} f(t,\bm{\nu},\bm{k})\ddiff \bm{\nu} = -\sum_{j=1}^d \alpha_j s_j \frac{\partial}{\partial s_j}\xi(t,\bm{k},\bm{s}).
\end{align*}
For the third term, we need to show
\begin{align*}
    \sum_{j=1}^d \alpha_j \overline{\lambda}_j \cL\Big( \frac{\partial f(t,\bm{\nu},\bm{k})}{\partial\nu_j}\Big)(\bm{s}) = \sum_{j=1}^d \alpha_j\overline{\lambda}_j s_j \xi(t,\bm{k},\bm{s}).
\end{align*}
Using integration by parts and that $f(t,\bm{k},\bm{0})=0$, we have
\begin{align*}
    \cL\Big( \frac{\partial f(t,\bm{\nu},\bm{k})}{\partial\nu_j}\Big)(\bm{s})
    &=  \int_{\bm{0}}^{\bm{\infty}}  e^{-\bm{s}^\top \bm{\nu}}  \frac{\partial f(t,\bm{\nu},\bm{k})}{\partial\nu_j} \ddiff \bm{\nu} \\
    &= \big[e^{-\bm{s}^\top \bm{\nu}} f(t,\bm{\nu},\bm{k})\big]^{\bm{\infty}}_{\bm{0}} + s_j s_j \int_{\bm{0}}^{\bm{\infty}} e^{-\bm{s}^\top \bm{\nu}} f(t,\bm{\nu},\bm{k})\ddiff \bm{\nu} = 0 + s_j \xi(t,\bm{k},\bm{s}).
\end{align*}
For the fourth term, let us denote the probability density function of $\bm{B}_j = (B_{1j},\dots,B_{dj})^\top$ by $h_j(\cdot)$.
We need to show
\begin{align*}
    &\sum_{j=1}^d \cL\Big(\int_0^{\nu_d} \cdots \int_0^{\nu_1}y_j f(t, \bm{y},\bm{k} - \bm{e}_j)
    \frac{\partial^d }{\partial \nu_1 \cdots \partial \nu_d} \pp(\bm{B}_j \leqslant \bm{\nu - y}) \mathrm{d}y_1\cdots \mathrm{d}y_d\Big) \\
    &= \sum_{j=1}^d \cL\Big( \int_{\bm{0}}^{\bm{\nu}} y_j h_j(\bm{\nu}-\bm{y}) f(t,\bm{y},\bm{k}- \bm{e}_j) \ddiff \bm{y}\Big) \\
    &= -\sum_{j=1}^d \beta_j(\bm{s}) \frac{\partial \xi(t,\bm{s},\bm{k}-\bm{e}_j)}{\partial s_j},
\end{align*}
with $\beta_j(\bm{s}) = \ee[e^{-\bm{s}^\top\bm{B}_j}]$. 
To show that this holds, we need the property that relates convolutions with integration, which states that
\begin{align*}
    \int_{\rr^d} (f * g)(\bm{x})\mathrm{d}\bm{x} 
    = \Big(\int_{\rr^d} f(\bm{x})\mathrm{d}\bm{x}\Big) \Big(\int_{\rr^d} g(\bm{x})\mathrm{d}\bm{x}\Big),
\end{align*}
for given integrable functions $f$ and $g$.
Using this, we have
\begin{align*}
    -\sum_{j=1}^d& \beta_j(\bm{s}) \frac{\partial \xi(t,\bm{s},\bm{k}-\bm{e}_j)}{\partial s_j} \\
    &= - \sum_{j=1}^d \beta_j(\bm{s}) \int_{\bm{0}}^{\bm{\infty}} \frac{\partial}{\partial s_j} e^{-\bm{s}^\top \bm{\nu}} f(t, \bm{\nu},\bm{k} -\bm{e}_j) \ddiff \bm{\nu} \\
    &= \sum_{j=1}^d \int_{\bm{0}}^{\bm{\infty}} e^{-\bm{s}^\top \bm{\nu}} h_j(\bm{\nu}) \ddiff \bm{\nu} \int_{\bm{0}}^{\bm{\infty}} \nu_j e^{-\bm{s}^\top \bm{\nu}} f(t, \bm{\nu},\bm{k} -\bm{e}_j) \ddiff \bm{\nu} \\
    &= \sum_{j=1}^d \int_{\bm{0}}^{\bm{\infty}} \int_{\bm{0}}^{\bm{\infty}} e^{-\bm{s}^\top (\bm{\nu} - \bm{y})} h_j(\bm{\nu} - \bm{y}) y_j e^{-\bm{s}^\top\bm{y}} f(t,\bm{y},\bm{k}-\bm{e}_j)\ddiff \bm{y} \ddiff \bm{\nu} \\
    &\overset{(\star)}{=} \sum_{j=1}^d \int_{\bm{0}}^{\bm{\infty}} e^{-\bm{s}^\top \bm{\nu}} \int_{\bm{0}}^{\bm{\nu}} h_j(\bm{\nu} - \bm{y}) y_j f(t,\bm{y},\bm{k}-\bm{e}_j)\ddiff \bm{y} \ddiff \bm{\nu} \\
    &= \sum_{j=1}^d \cL\Big(\int_0^{\nu_d} \cdots \int_0^{\nu_1}y_j f(t, \bm{y},\bm{k} - \bm{e}_j)
    \frac{\partial^d }{\partial \nu_1 \cdots \partial \nu_d} \pp(\bm{B}_j \leqslant \bm{\nu - y}) \mathrm{d}y_1\cdots \mathrm{d}y_d\Big),
\end{align*}
where $(\star)$ holds because the non negativity $\pp(\bm{B}_j \geqslant \bm{0}) = 1$ implies $h_j(\bm{\nu}-\bm{y}) = 0$ if $\bm{\nu} \geqslant \bm{y}$.
The fifth term follows immediately by linearity since
\begin{align*}
    \sum_{j=1}^d \cL\big(f(t, \bm{\nu}, \bm{k + e}_j)(k_j + 1)\mu_j\big)(\bm{s})
    = \sum_{j=1}^d (k_j + 1)\mu_j \xi(t, \bm{s},\bm{k} + \bm{e}_j).
\end{align*}
Finally, the sixth term follows from the elementary computation
\begin{align*}
    - \sum_{j=1}^d  \cL\big(f(t,\bm{\nu},\bm{k}) (k_j \mu_j + \nu_j) \big)(\bm{s})
    &= - \sum_{j=1}^d k_j \mu_j \cL\big(f(t,\bm{\nu},\bm{k}) (\bm{s}) 
    - \sum_{j=1}^d k_j\nu_j\cL\big(f(t,\bm{\nu},\bm{k})\big)(\bm{s}) \\
    &= - \sum_{j=1}^d k_j \mu_j \xi(t,\bm{s},\bm{k}) 
    + \sum_{j=1}^d \int_{\bm{0}}^{\bm{\infty}} \frac{\partial}{\partial s_j} e^{-\bm{s}^\top \bm{\nu}} f(t,\bm{\nu},\bm{k})\ddiff \bm{\nu} \\
    &=- \sum_{j=1}^d k_j \mu_j \xi(t,\bm{s},\bm{k})  + \sum_{j=1}^d \frac{\partial \xi(t,\bm{s},\bm{k}) }{\partial s_j}.
\end{align*}

We can now derive Eqn.~\eqref{eq: PDE equation in terms of xi}, where we use the shorthand notation
\begin{align*}
    \zeta(t,\bm{s},\bm{z}) = \cZ\big(\xi(t,\bm{s},\cdot)\big)(\bm{z}) 
    = \sum_{k_1=0}^\infty \cdots \sum_{k_d=0}^\infty z_{1}^{k_1}\cdots z_d^{k_d} \xi(t,\bm{s},\bm{k})
    \equiv \sum_{\bm{k}\in \nn_0^d} \bm{z}^{\bm{k}}\xi(t,\bm{s},\bm{k}),
\end{align*}
with $\nn_0^d = \{0,1,2,\dots\}^d$.
As before, we take the term-by-term zeta transformation and show that we obtain Eqn.~\eqref{eq: PDE equation in terms of zeta}.
The first and second terms are immediate by construction and linearity, since we have
\begin{align*}
    \cZ\Big(\frac{\partial \xi(t,\bm{s},\cdot)}{\partial t}\Big)(\bm{z}) 
    &= \frac{\partial \zeta(t,\bm{s},\bm{z})}{\partial t},\\
    \sum_{j=1}^d\cZ\Big( \alpha_j s_j \frac{\partial \xi(t,\bm{s},\cdot)}{\partial s_j}\Big)(\bm{z})
    &= \sum_{j=1}^d (\alpha_js_j -1 ) \frac{\partial \zeta(t,\bm{s},\bm{z})}{\partial s_j}.
\end{align*}
We proceed by analyzing the third term,
with the notation $\nn_j^d = \{\bm{n}\in\nn^d : n_j \geqslant 1\}$ for $j\in[d]$, with mild abuse of notation,
\begin{align*}
    \sum_{j=1}^d \cZ\Big( \frac{\partial \xi(t,\bm{s},\cdot-\bm{e}_j)}{\partial s_j} \beta_j(\bm{s})\Big)(\bm{z})
    &= \sum_{j=1}^d \sum_{\bm{k}\in\nn_j^d} \bm{z}^{\bm{k}} \beta_j(\bm{s}) \frac{\partial \xi(t,\bm{s},\bm{k -e}_j)}{\partial s_j} \\
    &=  \sum_{j=1}^d \beta_j(\bm{s}) z_j \sum_{\bm{k} \in\nn_0^d} \bm{z}^{\bm{k}} \frac{\partial \xi(t,\bm{s},\bm{k})}{\partial s_j} = \sum_{j=1}^d \beta_j(\bm{s}) z_j \frac{\partial \zeta(t,\bm{s},\bm{z})}{\partial s_j}.
\end{align*}
For the fourth and fifth term, using elementary computations,
\begin{align*}
    &\sum_{j=1}^d \cZ\Big( \mu_j(k_j + 1)\xi(t,\bm{s},\bm{k}+\bm{e}_j) - \mu_jk_j \xi(t,\bm{s},\bm{k})\Big)(\bm{z}) \\ 
    &= \sum_{j=1}^d \mu_j \sum_{\bm{k}\in\nn_0^d} (k_j +1) \bm{z}^{\bm{k}} \xi(t,\bm{s},\bm{k}+\bm{e}_j) - k_j \bm{z}^{\bm{k}}\xi(t,\bm{s},\bm{k}) \\
    &= \sum_{j=1}^d \mu_j \sum_{\bm{k}\in\nn_j^d} k_j \bm{z}^{\bm{k} - \bm{e}_j}\xi(t,\bm{s},\bm{k}) - k_j z_j \bm{z}^{\bm{k} - \bm{e}_j}\xi(t,\bm{s},\bm{k}) \\
    &= \sum_{j=1}^d \mu_j(1-z_j) \frac{\partial}{\partial z_j} \sum_{\bm{k}\in\nn_0^d} \bm{z}^{\bm{k}} \xi(t,\bm{s},\bm{k}) = - \sum_{j=1}^d \mu_j(z_j - 1)\frac{\partial \zeta(t,\bm{s},\bm{z})}{\partial z_j}.
\end{align*}
Finally, the sixth term follows immediately from the definition since
\begin{align*}
    \sum_{j=1}^d \alpha_j  \overline{\lambda}_j s_j \cZ\Big(\xi(t,\bm{s},\cdot)\Big)(\bm{z})
    =\zeta(t,\bm{s},\bm{z}) \sum_{j=1}^d \alpha_j \overline{\lambda}_j s_j.
\end{align*}

\subsection{Joint moments: Computations} \label{appendix: joint moment compuations}

In this section, we provide the details behind the derivation of the PDE to ODE as given in Eqns.~\eqref{eq: PDE diff wrt s} and \eqref{eq: PDE diff wrt s and z}.
Since we are taking partial derivatives with respect to multiple variables, systematic bookkeeping is crucial.
Some terms are straightforward to compute, so we focus on the ones that require careful attention.
We first show the result and then provide details about the (relatively) complicated terms.

Differentiating Eqn.~\eqref{eq: PDE zeta rewritten} $n_{\lambda_1}, \dots, n_{\lambda_d}$ times with respect to $s_1,\dots,s_d$, respectively, and then substituting $\bm{s}=\bm{0}$, yields
\begin{align} \label{eq: PDE diff wrt s}
\begin{split}
    &\frac{\mathrm{d}}{\mathrm{d}t}\ee\Big[ \prod_{i=1}^d \lambda_i(t)^{n_{\lambda_i}} z_i^{Q_i(t)} \Big]
    + \sum_{j=1}^d n_{\lambda_j}\alpha_j \ee\Big[ \prod_{i=1}^d \lambda_i(t)^{n_{\lambda_i}} z_i^{Q_i(t)} \Big] \\
    &\quad- \sum_{l=1}^d  n_{\lambda_l} \sum_{j=1}^d  \ee\big[B_{lj}\big] \ee\Big[z_j\lambda_j(t) \prod_{i=1}^d \lambda_i(t)^{n_{\lambda_i} - \bm{1}_{\{i=l\}}} z_i^{Q_i(t)}\Big] \\
    &\quad + \sum_{j=1}^d \mu_j(z_j - 1)\ee\Big[ Q_j(t)\prod_{i=1}^d \lambda_i(t)^{n_{\lambda_i}} z_i^{Q_i(t) -\bm{1}_{\{i=j\}}} \Big] \\
    &= \sum_{j=1}^d (z_j-1)  \ee\Big[ \lambda_j(t) \prod_{i=1}^d \lambda_i(t)^{n_{\lambda_i}} z_i^{Q_i(t)} \Big] + \sum_{j=1}^d \alpha_j \overline{\lambda}_jn_{\lambda_j} \ee\Big[ \prod_{i=1}^d \lambda_i(t)^{n_{\lambda_i}-\bm{1}_{\{i=j\}}} z_i^{Q_i(t)} \Big] \\
    & \quad +\sum_{j=1}^d 
    \sum_{m_1=0}^{n_{\lambda_1}} \cdots \sum_{m_d=0}^{n_{\lambda_d}} \bm{1}_{\{m\leqslant n_\lambda -2\}} 
    \prod_{k=1}^d {n_{\lambda_k}\choose m_k}\ee\Big[z_j \prod_{i=1}^d B_{ij}^{n_{\lambda_i} - m_i} \lambda_i(t)^{m_i + \bm{1}_{\{i=j\}}} z_i^{Q_i(t)}\Big],
\end{split}
\end{align}
where $m= \sum_{i=1}^dm_i$ and we collected the $\ee[B_{ij}]$ combinations of first order on the left-hand side and higher orders on the right-hand side.
Then take Eqn.~\eqref{eq: PDE diff wrt s} and differentiate $n_{Q_1},\dots,n_{Q_d}$ times with respect to $z_1,\dots,z_d$, respectively, and substitute $\bm{z}=\bm{1}$. After elementary calculus,
\begin{align} 
    &\frac{\mathrm{d}}{\mathrm{d}t}\psi_t( \bm{n_{\lambda}},\bm{n_Q})
    + \sum_{j=1}^d \big(n_{\lambda_j} (\alpha_j -\ee\big[B_{jj}\big]) + n_{Q_j} \mu_j\big) \psi_t( \bm{n_{\lambda}}, \bm{n_Q})\notag \\
    &= \sum_{j=1}^d \sum_{\substack{i=1 \\ i\neq j}}^d  n_{\lambda_i}   \ee\big[B_{ij}\big] \psi_t( \bm{n_{\lambda}} - \bm{e}_i + \bm{e}_j, \bm{n_Q}) + \sum_{j=1}^d n_{Q_j} \psi_t( \bm{n_{\lambda}}+\bm{e_j}, \bm{n_Q} - \bm{e}_j)\\
    &\quad  + \sum_{j=1}^d \alpha_j \overline{\lambda}_jn_{\lambda_j}  \psi_t(  \bm{n_{\lambda}}-\bm{e}_j,\bm{n_Q}) + \sum_{i=1}^d \sum_{j=1}^d n_{\lambda_i} n_{Q_j} \ee\big[B_{ij}\big]  \psi_t( \bm{n_{\lambda}} - \bm{e}_i + \bm{e}_j,\bm{n_Q}-\bm{e}_j) \notag\\
    & \quad + \sum_{j=1}^d\sum_{m_1=0}^{n_{\lambda_1}} \cdots \sum_{m_d=0}^{n_{\lambda_d}} \bm{1}_{\{m\leqslant n_\lambda -2\}}   \prod_{k=1}^d {n_{\lambda_k}\choose m_k}\Big\{ n_{Q_j} \prod_{i=1}^d\ee\big[B_{ij}^{n_{\lambda_i} - m_i}\big]
    \psi_t( \bm{m}+\bm{e}_j,\bm{n_Q}-\bm{e}_j)\notag \\
    &\quad \quad +\prod_{i=1}^d \ee\big[B_{ij}^{n_{\lambda_i} - m_i}\big] \psi_t(  \bm{m}+\bm{e}_j,\bm{n_Q})\Big\}\notag.
\end{align}

To obtain the ODE in Eqn.~\eqref{eq: PDE diff wrt s}, the starting point is Eqn.~\eqref{eq: PDE zeta rewritten}.
Differentiate $n_{\lambda_1},\dots,n_{\lambda_d}$ times with respect to $s_1,\dots,s_d$ respectively, and then substitute $\bm{s}=\bm{0}$. 
The terms that are not immediate to compute are those where we need to apply the product rule repeatedly.
Consider the computation of
\begin{align*}
    \frac{\partial^{n_{\lambda_1}} \cdots \partial^{n_{\lambda_d}}}{\partial s_1^{n_{\lambda_1}} \cdots \partial s_d^{n_{\lambda_d}}}
    \sum_{j=1}^d \alpha_js_j\ee\big[\lambda_j(t)e^{-\bm{s}^\top\bm{\lambda}(t)}\prod_{n=1}^dz_n^{Q_n(t)}\big].
\end{align*}
We first focus on differentiation with respect to the first component, yielding
\begin{align*}
    &\frac{\partial^{n_{\lambda_1}}}{\partial s_1^{n_{\lambda_1}}}\sum_{j=1}^d \alpha_js_j\ee\big[\lambda_j(t)e^{-\bm{s}^\top\bm{\lambda}(t)}\prod_{n=1}^dz_n^{Q_n(t)}\big] \\
    &= \frac{\partial^{n_{\lambda_1}-1}}{\partial s_1^{n_{\lambda_1}-1}} \alpha_1 \ee\big[\lambda_1(t)e^{-\bm{s}^\top\bm{\lambda}(t)}\prod_{n=1}^dz_n^{Q_n(t)}\big]
    - \frac{\partial^{n_{\lambda_1}-1}}{\partial s_1^{n_{\lambda_1}-1}} \sum_{j=1}^d \alpha_js_j\ee\big[\lambda_1(t)\lambda_j(t)e^{-\bm{s}^\top\bm{\lambda}(t)}\prod_{n=1}^dz_n^{Q_n(t)}\big] \\
    &= - 2\frac{\partial^{n_{\lambda_1}-2}}{\partial s_1^{n_{\lambda_1}-2}}\alpha_1 \ee\big[\lambda_1(t)^2e^{-\bm{s}^\top\bm{\lambda}(t)}\prod_{n=1}^dz_n^{Q_n(t)}\big]
    + \frac{\partial^{n_{\lambda_1}-2}}{\partial s_1^{n_{\lambda_1}-2}} \sum_{j=1}^d \alpha_js_j\ee\big[\lambda_1(t)^2\lambda_j(t)e^{-\bm{s}^\top\bm{\lambda}(t)}\prod_{n=1}^dz_n^{Q_n(t)}\big] \\
    &\ \,\vdots \\
    &= n_{\lambda_1} (-1)^{n_{\lambda_1}-1}\alpha_1 \ee\big[\lambda_1(t)^{n_{\lambda_1}}e^{-\bm{s}^\top\bm{\lambda}(t)}\prod_{n=1}^dz_n^{Q_n(t)}\big] 
    +\sum_{j=1}^d \alpha_js_j\ee\big[\lambda_1(t)^{n_{\lambda_1}}\lambda_j(t)e^{-\bm{s}^\top\bm{\lambda}(t)}\prod_{n=1}^dz_n^{Q_n(t)}\big].
\end{align*}
Note that all the terms in the latter sum vanish when we substitute $\bm{s}=\bm{0}$.
An analogous expression holds for the other components.
If we now combine the differentiation with respect to all components and substitute $\bm{s}=\bm{0}$, we have
\begin{align*}
    &\frac{\partial^{n_{\lambda_1}} \cdots \partial^{n_{\lambda_d}}}{\partial s_1^{n_{\lambda_1}} \cdots \partial s_d^{n_{\lambda_d}}}
    \sum_{j=1}^d \alpha_js_j\ee\big[\lambda_j(t)e^{-\bm{s}^\top\bm{\lambda}(t)}\prod_{n=1}^dz_n^{Q_n(t)}\big]
    =\sum_{j=1}^d n_{\lambda_j}\alpha_j \ee\Big[ \prod_{i=1}^d \lambda_i(t)^{n_{\lambda_i}} z_i^{Q_i(t)} \Big].
\end{align*}

Another, more complicated, term we need to compute is
\begin{align*}
    \frac{\partial^{n_{\lambda_1}} \cdots \partial^{n_{\lambda_d}}}{\partial s_1^{n_{\lambda_1}} \cdots \partial s_d^{n_{\lambda_d}}}\sum_{j=1}^d z_j\beta_j(\bm{s})\ee\big[\lambda_j(t)e^{-\bm{s}^\top\bm{\lambda}(t)}\prod_{n=1}^dz_n^{Q_n(t)}\big],
\end{align*}
with $\beta_j(\bm{s}) = \ee\big[e^{-\bm{s}^\top \bm{B}_j}\big]$.
It is clear that taking higher order derivatives means that we have to successively apply the product rule.
Moreover, since we are taking partial derivatives with respect to multiple components, we obtain a large number of cross terms.
Let us focus on the first component, which yields
\begin{align*}
    &\frac{\partial^{n_{\lambda_1}}}{\partial s_1^{n_{\lambda_1}}}\sum_{j=1}^d z_j\beta_j(\bm{s})\ee\big[\lambda_j(t)e^{-\bm{s}^\top\bm{\lambda}(t)}\prod_{n=1}^dz_n^{Q_n(t)}\big] \\
    &= (-1)^1\sum_{j=1}^d z_j \frac{\partial^{n_{\lambda_1}-1}}{\partial s_1^{n_{\lambda_1}-1}} \ee\big[B_{1j}e^{-\bm{s}^\top\bm{B}_j}\big] \ee\big[\lambda_j(t)e^{-\bm{s}^\top\bm{\lambda}(t)}\prod_{n=1}^dz_n^{Q_n(t)}\big] \\ 
    &\quad + (-1)^1\sum_{j=1}^d z_j\frac{\partial^{n_{\lambda_1}-1}}{\partial s_1^{n_{\lambda_1}-1}}\ee\big[e^{-\bm{s}^\top \bm{B}_j}\big] \ee\big[\lambda_1(t)\lambda_j(t)e^{-\bm{s}^\top\bm{\lambda}(t)}\prod_{n=1}^dz_n^{Q_n(t)}\big] \\
    &= (-1)^2\sum_{j=1}^d z_j \frac{\partial^{n_{\lambda_1}-2}}{\partial s_1^{n_{\lambda_1}-2}} \ee\big[B_{1j}^2e^{-\bm{s}^\top\bm{B}_j}\big] \ee\big[\lambda_j(t)e^{-\bm{s}^\top\bm{\lambda}(t)}\prod_{n=1}^dz_n^{Q_n(t)}\big] \\ 
    &\quad +2(-1)^2\sum_{j=1}^d z_j \frac{\partial^{n_{\lambda_1}-2}}{\partial s_1^{n_{\lambda_1}-2}} \ee\big[B_{1j}e^{-\bm{s}^\top\bm{B}_j}\big] \ee\big[\lambda_1(t)\lambda_j(t)e^{-\bm{s}^\top\bm{\lambda}(t)}\prod_{n=1}^dz_n^{Q_n(t)}\big]\\
    &\quad \quad +(-1)^2\sum_{j=1}^d z_j \frac{\partial^{n_{\lambda_1}-2}}{\partial s_1^{n_{\lambda_1}-2}} \ee\big[e^{-\bm{s}^\top\bm{B}_j}\big] \ee\big[\lambda_1(t)^2\lambda_j(t)e^{-\bm{s}^\top\bm{\lambda}(t)}\prod_{n=1}^dz_n^{Q_n(t)}\big] \\ 
    &\ \, \vdots \\
    &= n_{\lambda_1}(-1)^{n_{\lambda_1}}\sum_{j=1}^d z_j \ee\big[B_{1j}e^{-\bm{s}^\top\bm{B}_j}\big] \ee\big[\lambda_1(t)^{n_{\lambda_1}-1}\lambda_j(t)e^{-\bm{s}^\top\bm{\lambda}(t)}\prod_{n=1}^dz_n^{Q_n(t)}\big]\\
    &\quad + (-1)^{n_{\lambda_1}} \sum_{j=1}^d z_j \bm{1}_{\{n_{\lambda_1}\geqslant 2\}} \sum_{m_1=0}^{n_{\lambda_1}-2} {n_{\lambda_1}\choose m_1} \ee\big[B_{1j}^{n_{\lambda_1}-m_1}\big]\ee\big[\lambda_1(t)^{m}\lambda_j(t)e^{-\bm{s}^\top\bm{\lambda}(t)}\prod_{n=1}^dz_n^{Q_n(t)}\big]\\
    &\quad\quad + (-1)^{n_{\lambda_1}} \sum_{j=1}^d z_j\ee\big[\lambda_1(t)^{n_{\lambda_1}}\lambda_j(t)e^{-\bm{s}^\top\bm{\lambda}(t)}\prod_{n=1}^dz_n^{Q_n(t)}\big],
\end{align*}
since the number of terms is doubled in every step of the derivation.
The computation for the other components is entirely analogous.
Upon taking the joint derivative and substituting $\bm{s}=\bm{0}$, we obtain
\begin{align*}
    &\frac{\partial^{n_{\lambda_1}} \cdots \partial^{n_{\lambda_d}}}{\partial s_1^{n_{\lambda_1}} \cdots \partial s_d^{n_{\lambda_d}}}\sum_{j=1}^d z_j\beta_j(\bm{s})\ee\big[\lambda_j(t)e^{-\bm{s}^\top\bm{\lambda}(t)}\prod_{n=1}^dz_n^{Q_n(t)}\big] \\
    &=\sum_{l=1}^d  n_{\lambda_l} \sum_{j=1}^d  \ee\big[B_{lj}\big] \ee\Big[z_j\lambda_j(t) \prod_{i=1}^d \lambda_i(t)^{n_{\lambda_i} - \bm{1}_{\{i=l\}}} z_i^{Q_i(t)}\Big]
    + \sum_{j=1}^d z_j \ee\Big[ \lambda_j(t) \prod_{i=1}^d \lambda_i(t)^{n_{\lambda_i}} z_i^{Q_i(t)} \Big] \\
    &\quad +\sum_{j=1}^d 
    \sum_{m_1=0}^{n_{\lambda_1}} \cdots \sum_{m_d=0}^{n_{\lambda_d}} \bm{1}_{\{m\leqslant n_\lambda -2\}} 
    \prod_{k=1}^d {n_{\lambda_k}\choose m_k}\ee\Big[z_j \prod_{i=1}^d B_{ij}^{n_{\lambda_i} - m_i} \lambda_i(t)^{m_i + \bm{1}_{\{i=j\}}} z_i^{Q_i(t)}\Big],
\end{align*}
where $m = m_1+\cdots m_d$.

We now focus on the terms to obtain the ODE in Eqn.~\eqref{eq: PDE diff wrt s and z}.
The starting point is Eqn.~\eqref{eq: PDE diff wrt s}, which we differentiate $n_{Q_1},\dots,n_{Q_d}$ times with respect to $z_1,\dots,z_d$ respectively and substitute $\bm{z}=\bm{1}$.
There are multiple terms in \eqref{eq: PDE diff wrt s} that require the product rule when differentiating.
We consider one such term and take the appropriate derivative, i.e.,
\begin{align*}
    \frac{\partial^{n_{Q_1}} \cdots \partial^{n_{Q_d}}}{\partial z_1^{n_{Q_1}} \cdots \partial z_d^{n_{Q_d}}}
    \sum_{j=1}^d \mu_j(z_j - 1)\ee\Big[ Q_j(t)\prod_{i=1}^d \lambda_i(t)^{n_{\lambda_i}} z_i^{Q_i(t) -\bm{1}_{\{i=j\}}} \Big].
\end{align*}
Again, we focus on differentiation with respect to the first component, which yields
\begin{align*}
    &\frac{\partial^{n_{Q_1}}}{\partial z_1^{n_{Q_1}}} \sum_{j=1}^d \mu_j(z_j - 1)\ee\Big[ Q_j(t)\prod_{i=1}^d \lambda_i(t)^{n_{\lambda_i}} z_i^{Q_i(t) -\bm{1}_{\{i=j\}}} \Big] \\
    &=\frac{\partial^{n_{Q_1}-1}}{\partial z_1^{n_{Q_1}-1}} \mu_1 \ee\Big[ Q_1(t)\prod_{i=1}^d \lambda_i(t)^{n_{\lambda_i}} z_i^{Q_i(t) -\bm{1}_{\{i=1\}}} \Big] \\
    &\quad + \frac{\partial^{n_{Q_1}-1}}{\partial z_1^{n_{Q_1}-1}} \mu_1(z_1-1) \ee\Big[Q_1(t)(Q_1(t)-1)\prod_{i=1}^d \lambda_i(t)^{n_{\lambda_i}} z_i^{Q_i(t) -2\bm{1}_{\{i=1\}}} \Big] \\
    &\quad\quad + \sum_{j=2}^d \frac{\partial^{n_{Q_1}-1}}{\partial z_1^{n_{Q_1}-1}}\mu_j(z_j - 1)\ee\Big[Q_j(t)Q_1(t)\prod_{i=1}^d \lambda_i(t)^{n_{\lambda_i}} z_i^{Q_i(t) -\bm{1}_{\{i=j\}} - \bm{1}_{\{i=1\}}} \Big] \\
    &=2\frac{\partial^{n_{Q_1}-2}}{\partial z_1^{n_{Q_1}-2}} \mu_1 \ee\Big[ Q_1(t)(Q_1(t)-1)\prod_{i=1}^d \lambda_i(t)^{n_{\lambda_i}} z_i^{Q_i(t) -2\bm{1}_{\{i=1\}}} \Big] \\
    &\quad + \frac{\partial^{n_{Q_1}-2}}{\partial z_1^{n_{Q_1}-2}} \mu_1(z_1-1) \ee\Big[Q_1(t)(Q_1(t)-1)(Q_1(t)-2)\prod_{i=1}^d \lambda_i(t)^{n_{\lambda_i}} z_i^{Q_i(t) -3\bm{1}_{\{i=1\}}} \Big] \\
    &\quad \quad \sum_{j=2}^d \frac{\partial^{n_{Q_1}-2}}{\partial z_1^{n_{Q_1}-2}}\mu_j(z_j - 1)\ee\Big[Q_j(t)Q_1(t)(Q_1(t)-1)\prod_{i=1}^d \lambda_i(t)^{n_{\lambda_i}} z_i^{Q_i(t) -\bm{1}_{\{i=j\}} - 2\bm{1}_{\{i=1\}}} \Big] \\
    &\ \, \vdots \\
    &= n_{Q_1} \mu_1 \ee\Big[ Q_1(t)^{[n_{Q_1}]}\prod_{i=1}^d \lambda_i(t)^{n_{\lambda_i}} z_i^{Q_i(t) -n_{Q_1}\bm{1}_{\{i=1\}}} \Big],
\end{align*}
where we substituted $\bm{z}=\bm{1}$ in the last step, canceling out all the terms that contain the factor $(z_j-1)$. 
We can compute the derivatives with respect to other components in a similar manner, which results in
\begin{align*}
    &\frac{\partial^{n_{Q_1}} \cdots \partial^{n_{Q_d}}}{\partial z_1^{n_{Q_1}} \cdots \partial z_d^{n_{Q_d}}}
    \sum_{j=1}^d \mu_j(z_j - 1)\ee\Big[ Q_j(t)\prod_{i=1}^d \lambda_i(t)^{n_{\lambda_i}} z_i^{Q_i(t) -\bm{1}_{\{i=j\}}} \Big] \\
    &= \sum_{j=1}^d n_{Q_j} \ee\Big[ \prod_{i=1}^d \lambda_i^{n_{\lambda_i}} Q_i(t)^{[n_{Q_i}]}\Big]
    = \sum_{j=1}^d n_{Q_j} \psi_t(\bm{n_Q},\bm{n_\lambda}).
\end{align*}

\section{First and Second Order Transient and Stationary Moments}\label{appendix: trans_stat_moments}

In this appendix we present an illustration concerning moments of order $n\in\{1,2\}$.
We introduce the relevant objects along the way, starting with the matrix $\ee[\bm{B}] = (\ee[B_{ij}])_{i,j\in[d]}$
and the diagonal matrices
\begin{align*}
    \bm{D}_{\alpha} := \text{diag}(\alpha_1,\alpha_2,\dots,\alpha_d),
    \quad 
    \bm{D}_{\mu} := \text{diag}(\mu_1,\mu_2,\dots,\mu_d).
\end{align*}

\subsection{Transient moments} \label{sec: transient moments d-dim}
We focus on the transient moments $\psi_t(\bm{n_Q}, \bm{n_\lambda})$, where now $\bm{n_Q} = (n_{Q_1},\dots,n_{Q_d})$ and $\bm{n_\lambda} = (n_{\lambda_1}, \dots, n_{\lambda_d})$.
We separately consider the cases $n=1$ and $n=2$.
To describe the joint moments of equal order in vector/matrix-form,
it turns out that for $n=1$ we need a stacked \textit{vector}, and for $n=2$ a stacked \textit{matrix}, introduced in detail below.

For $n=1$ we define the stacked vector
\begin{align} \label{eq: def transient order 1 d dim stacked vector}
    \bm{\Sigma}_t^{(1)} := \Big( \ee[\bm{\lambda}(t)], \ee[\bm{Q}(t)]\Big)^\top.
\end{align}
For each entry of the vector, we use Eqn.~\eqref{eq: PDE diff wrt s and z} to obtain the vector-valued ODEs
\begin{align}\label{eq: transient ODE 1st order d-dim}
\begin{split}
    \frac{\ddiff}{\ddiff t} \ee[\bm{\lambda}(t)]
    &= \big(\ee[\bm{B}]-\bm{D}_\alpha\big)
    \ee[\bm{\lambda}(t)]+ \bm{L}^{(0,1)},\\
    \frac{\ddiff}{\ddiff t} \ee[\bm{Q}(t)]  
    &= -\bm{D}_\mu \ee[\bm{Q}(t)] + \ee[\bm{\lambda}(t)],
\end{split}
\end{align}
where $\bm{L}^{(0,1)} = \big( \alpha_1\overline{\lambda}_{1}, \alpha_2\overline{ \lambda}_{2},\dots, \alpha_d \overline{\lambda}_{d}\big)^\top$.
It is directly verified that \eqref{eq: transient ODE 1st order d-dim} is solved by
\begin{align} \label{eq: solution transient order 1, d-dim}
\begin{split}
    \ee[\bm{\lambda}(t)]&= e^{t(\ee[\bm{B}]-\bm{D}_\alpha)}\bm{\overline{\lambda}} + \int_0^t e^{(t-s)(\ee[\bm{B}]-\bm{D}_\alpha)} \ddiff s\, (\bm{\alpha} \odot \bm{\overline{\lambda}}) \\
    &= e^{t(\ee[\bm{B}]-\bm{D}_\alpha)}\bm{\overline{\lambda}} + (\ee[\bm{B}]-\bm{D}_\alpha)^{-1}\big( e^{t(\ee[\bm{B}]-\bm{D}_\alpha)} -\bm{I}\big) (\bm{\alpha} \odot \bm{\overline{\lambda}}), \\
    \ee[\bm{Q}(t)]
    &=  \int_0^t e^{-(t-s)\bm{D}_\mu} \ee[\bm{\lambda}(s)] \ddiff s.
\end{split}
\end{align}

We proceed with $n=2$.
As indicated at the start of this subsection, in this case we should work with a stacked matrix.
To this end, define
\begin{align*} 
    \ee\big[\bm{Q}(t)^{[2]}\big] &:= \ee\big[ \bm{Q}(t)\bm{Q}(t)^\top \big] 
    - \text{diag}(\ee[\bm{Q}(t)])
    \\&=
    \ee
    \begin{bmatrix}
    Q_1(t)^{[2]} & Q_1(t)Q_2(t) & \cdots & Q_1(t)Q_d(t) \\
    Q_2(t)Q_1(t) & Q_2(t)^{[2]} & \cdots & Q_2(t)Q_d(t) \\
    \vdots & \vdots & \ddots & \vdots \\
    Q_d(t)Q_1(t) & Q_d(t)Q_2(t) &\cdots & Q_d(t)^{[2]}
    \end{bmatrix},
\end{align*}
and we also consider the objects $\ee\big[ \bm{\lambda}(t)\bm{Q}(t)^\top\big]$ and $\ee\big[ \bm{\lambda}(t)\bm{\lambda}(t)^\top\big]$, which are all $d\times d$-matrices.
In addition, we define the stacked matrix $\bm{\Sigma}_t^{(2)}$ given by
\begin{align}\label{eq: def 2nd order stacked matrix d-dim}
    \bm{\Sigma}_t^{(2)} := 
    \ee\big[ \bm{\lambda}(t)\bm{\lambda}(t)^\top\big] \oplus
    \ee\big[ \bm{\lambda}(t)\bm{Q}(t)^\top\big] \oplus 
    \ee\big[\bm{Q}(t)^{[2]}\big],
\end{align}
where $\oplus$ indicates the direct sum, so that $\bm{\Sigma}_t^{(2)}$ is a $3d\times3d$-matrix.
For each entry of a submatrix, we derive its associated ODE from Eqn.~\eqref{eq: PDE diff wrt s and z} which we combine into matrix-valued ODEs.
We thus find the matrix-valued ODEs
\begin{align} \label{eq: transient ODE 2nd order d-dim}
    \frac{\ddiff}{\ddiff t} \ee\big[ \bm{\lambda}(t)\bm{\lambda}(t)^\top\big]
    &= \big(\ee[\bm{B}] -\bm{D}_{\alpha}\big) \ee\big[ \bm{\lambda}(t)\bm{\lambda}(t)^\top\big] 
    +\ee\big[ \bm{\lambda}(t)\bm{\lambda}(t)^\top\big] \big(\ee[\bm{B}] -\bm{D}_{\alpha}\big)^\top  \notag \\
    &\quad + \ee\big[\bm{B} \,\textrm{diag}\big(  \ee[\bm{\lambda}(t)]\big)\bm{B}^\top\big]
    + \bm{D}_\alpha\big( \bm{\bar{\lambda}}  \ee[\bm{\lambda}(t)]^\top\big) + \big(  \ee[\bm{\lambda}(t)] \bm{\bar{\lambda}}^\top\big)\bm{D}_\alpha, \notag \\
     \frac{\ddiff}{\ddiff t} \ee\big[ \bm{\lambda}(t)\bm{Q}(t)^\top\big]
    &= \big(\ee[\bm{B}]- \bm{D}_{\alpha}\big) \ee\big[ \bm{\lambda}(t)\bm{Q}(t)^\top\big] - \ee\big[ \bm{\lambda}(t)\bm{Q}(t)^\top\big] \bm{D}_\mu  +\ee\big[ \bm{\lambda}(t)\bm{\lambda}(t)^\top\big] \\
    &\quad + (\bm{\alpha} \odot \bm{\bar{\lambda}})  \ee[\bm{Q}(t)]^\top
    + \ee[\bm{B}]\,\textrm{diag}\big( \ee[\bm{\lambda}(t)]\big),\notag \\
    \frac{\ddiff}{\ddiff t} \ee\big[\bm{Q}(t)^{[2]}\big]
    &= -\bm{D}_\mu \ee\big[\bm{Q}(t)^{[2]}\big] - \ee\big[\bm{Q}(t)^{[2]}\big]\bm{D}_\mu + \ee\big[ \bm{\lambda}(t)\bm{Q}(t)^\top\big] +\big(\ee\big[ \bm{\lambda}(t)\bm{Q}(t)^\top\big]\big)^\top.\notag
\end{align}

We end our account of the transient moments with a series of brief remarks.
The ODEs for $\bm{\Sigma}_t^{(1)}$ and $\bm{\Sigma}_t^{(2)}$ are related to those derived in Lemmas 1 and 3 of \cite{DFZ15}.
Concretely, the solution for the first moment $\ee[\bm{\lambda}(t)]$ agrees with Eqn.~(8) in \cite{DFZ15}.
Furthermore, by taking the limit $\bm{\mu} \downarrow \bm{0}$ in our expression for $\bm{Q}(t)$, we obtain 
\begin{align}
    \ee\big[\bm{N}(t)\big] 
    =\:& \big( \ee[\bm{B}]-\bm{D}_\alpha\big)^{-1}\big( e^{t\,(\ee[\bm{B}]-\bm{D}_\alpha)} - I\big)\bm{\overline{\lambda}}\:+ \notag \\
    & \big( \ee[\bm{B}]-\bm{D}_\alpha\big)^{-2} \big( e^{t\,(\ee[\bm{B}]-\bm{D}_\alpha)} - I\big) (\bm{\alpha} \odot \bm{\overline{\lambda}})  + t\,\big(\ee[\bm{B}]-\bm{D}_\alpha\big)^{-1} (\bm{\alpha} \odot \bm{\overline{\lambda}}), 
\end{align}
which agrees with the result in Eqn.~(10) in \cite{DFZ15}.
Regarding the second order moments, upon taking $\bm{\mu}\downarrow 0$ in Eqn.~\eqref{eq: transient ODE 2nd order d-dim} we recover the expressions in Lemma~3 of \cite{DFZ15} (where it is noted that an elementary conversions needs to be performed, as we work with $\ee\big[ \bm{N}(t)^{[2]}\big]$ and~\cite{DFZ15} with $\ee\big[ \bm{N}(t)\bm{N}(t)^\top\big]$).

\subsection{Stationary moments} \label{sec: stationary moments d-dim}

We continue by considering the joint stationary moments of order at most $2$.
We adopt the notation used in Subsection \ref{sec: transient moments d-dim}.

For order $n=1$, define the stationary version of Eqn.~\eqref{eq: def transient order 1 d dim stacked vector}:
\begin{align}
    \bm{\Sigma}^{(1)} := \lim_{t\to\infty} \bm{\Sigma}^{(1)}_t = \Big( \ee[\bm{\lambda}], \ee[\bm{Q}] \Big)^\top.
\end{align}
We derive from Eqn.~\eqref{eq: PDE diff wrt s and z in steady-state} that the elements of this stacked vector satisfy
\begin{align}
    \ee[\bm{\lambda}] = -\big(\ee[\bm{B}] - \bm{D}_{\alpha}\big)^{-1}\bm{L}^{(0,1)},\qquad
    \ee[\bm{Q}] = \bm{D}_{\mu}^{-1}\ee[\bm{\lambda}].
\end{align}

For order $n=2$, we define the stacked matrix
\begin{align}
    \bm{\Sigma}^{(2)} := \lim_{t\to\infty}  \bm{\Sigma}_t^{(2)}
    = \ee[\bm{\lambda}\bm{\lambda}^\top] \oplus \ee[\bm{\lambda}\bm{Q}^\top] \oplus  \ee[\bm{Q}^{[2]}],
\end{align}
From the procedure followed for the transient moments, in combination with Eqn.~\eqref{eq: PDE diff wrt s and z in steady-state}, we conclude that, with $\ee[\bm{\lambda}]$ and $\ee[\bm{Q}]$ given above,
\begin{align} 
    0 &= \big(\ee[\bm{B}] -\bm{D}_{\alpha}\big) \ee[\bm{\lambda}\bm{\lambda}^\top] + \ee[\bm{\lambda}\bm{\lambda}^\top] \big(\ee[\bm{B}] -\bm{D}_{\alpha}\big)^\top + \ee\big[\bm{B} \text{diag}\big( \ee[\bm{\lambda}]\big)\bm{B}^\top\big] \notag \\
    & \quad + \bm{D}_\alpha\big( \bar{\bm{\lambda}} \ee[\bm{\lambda}]^\top\big) + \big( \ee[\bm{\lambda}]\bm{\overline{\lambda}}^\top\big)\bm{D}_\alpha,\notag  \\
    0 &= \big(\ee[\bm{B}]- \bm{D}_{\alpha}\big)\ee[\bm{\lambda}\bm{Q}^\top] - \ee[\bm{\lambda}\bm{Q}^\top]\bm{D}_{\mu} + \ee[\bm{\lambda}\bm{\lambda}^\top] 
    +  (\bm{\alpha} \odot \bar{\bm{\lambda}}) \ee[\bm{Q}]^\top + \ee[\bm{B}] \text{diag}\big(\ee[\bm{\lambda}]\big)^\top,  \notag \\
    0 &= -\bm{D}_\mu \ee[\bm{Q}^{[2]}] - \ee[\bm{Q}^{[2]}]\bm{D}_\mu + \ee[\bm{\lambda}\bm{Q}^\top] +\big(\ee[\bm{\lambda}\bm{Q}^\top]\big)^\top.
\label{eq: stationary moment order 2 d-dim}
\end{align}
The matrix-valued equations in \eqref{eq: stationary moment order 2 d-dim} are all \textit{Sylvester equations}, i.e., equations of the form
\begin{align}
    \bm{A}\bm{X} + \bm{X}\bm{B} = \bm{C},
\end{align}
for known matrices $\bm{A}$, $\bm{B}$, $\bm{C}$, with the matrix $\bm{X}$ being unknown. 
It is a known result that a unique solution for $\bm{X}$ exists if and only $\bm{A}$ and $-\bm{B}$ do not share any eigenvalue.

We conclude this subsection with two results on higher order stationary moments.
By applying Eqn.~\eqref{eq: PDE diff wrt s and z in steady-state}, we can obtain expressions for the moments
\begin{align}
    \psi(\bm{0},\bm{n_\lambda}) = \ee\big[\prod_{i=1}^d \lambda_i^{n_{\lambda_i}}\big], \quad \psi(\bm{n_Q},\bm{0}) = \ee\big[\prod_{i=1}^d Q_i^{[n_{Q_i}]}\big],
\end{align}
by straightforward substitution.
Indeed, for fixed $n_{\lambda} \in \nn$, we substitute  $n_{Q_j} \equiv 0$ for all $j\in[d]$ in Eqn.~\eqref{eq: PDE diff wrt s and z in steady-state}.
Rearranging terms, we obtain
\begin{align}
    &\psi(\bm{0},\bm{n_\lambda})
    = \Big( \sum_{j=1}^d n_{\lambda_j} (\alpha_j -\ee[B_{jj}])\Big)^{-1} \sum_{j=1}^d \Big\{\sum_{\substack{i=1 \\ i\neq j}}^d  n_{\lambda_i}   \ee\big[B_{ij}\big] \psi(\bm{0},\bm{n_{\lambda}} - \bm{e}_i + \bm{e}_j) \\
    & + \alpha_j \overline{\lambda}_jn_{\lambda_j}  \psi(\bm{0}, \bm{n_{\lambda}}-\bm{e}_j) 
    + \sum_{m_1=0}^{n_{\lambda_1}} \cdots \sum_{m_d=0}^{n_{\lambda_d}} \bm{1}_{\{m\leqslant n_\lambda -2\}}   \prod_{k=1}^d {n_{\lambda_k}\choose m_k}\prod_{i=1}^d \ee\big[B_{ij}^{n_{\lambda_i} - m_i}\big] \psi(\bm{0}, \bm{m}+\bm{e}_j)\Big\}. \notag
\end{align}
Observe that in order to obtain a final closed-form expression for $\psi(\bm{0},\bm{n_\lambda})$, we need to solve a linear system of equations of equal order moments, i.e., the $\psi(\bm{0},\bm{n_\lambda}-\bm{e}_i + \bm{e}_j)$ terms.

A similar result holds for the joint moments of $\bm{Q}$: for fixed $n_Q \in \nn$, we substitute $n_{\lambda_j} \equiv 0$ for all $j\in[d]$ in Eqn.~\eqref{eq: PDE diff wrt s and z in steady-state}, yielding
\begin{align}
    \psi(\bm{n_Q},\bm{0}) &= \Big(\sum_{j=1}^d n_{Q_j}\mu_j\Big)^{-1}\sum_{j=1}^d n_{Q_j} \psi(\bm{n_Q} - \bm{e}_j, \bm{e_j}).
\end{align}

\section{Explicit Examples for Bivariate Setting} \label{appendix: explicit bivariate}

In this section, we provide more explicit details for the moments in the bivariate setting, i.e, the case $d=2$.
We provide examples by writing out the recursive procedure outlined in Section~\ref{sec: recursive}.
The main objective is to derive near-explicit results for both the transient moments $\psi_t((n_{Q_1},n_{Q_2}),(n_{\lambda_1},n_{\lambda_2}))$ and stationary moments $\psi((n_{Q_1},n_{Q_2}),(n_{\lambda_1},n_{\lambda_2}))$, where the focus is on moments of order $1$ and~$2$.
In both cases, we apply the recursive procedures described in Section~\ref{sec: recursive}.

\subsection{Recursive procedure} \label{appendix: bivariate recursive procedure}

We illustrate the stacked vector $\bm{\Psi}_t^{(n)}$ for orders $n=1$ and $n=2$, and derive the ODEs associated with the recursive procedure.

\begin{example_text}[first order, bivariate]\label{example: order 1 moments} \em
For $n=1$, we have $\mathfrak{D}(2,1)=4$, and
\begin{align*}
    \bm{\Psi}^{(1)}_t 
    &= \big(\bm{\Psi}^{(0,1)}_t, \bm{\Psi}^{(1,0)}_t \big)^\top,
\end{align*}
where
\begin{align*}
    \bm{\Psi}^{(0,1)}_t &= \big(  \ee\big[ \lambda_1(t)\big], \ee\big[ \lambda_2(t)\big]\big)^\top, \quad
    \bm{\Psi}^{(1,0)}_t = \big(\ee\big[ Q_1(t)\big], \ee\big[ Q_2(t)\big]\big)^\top.
\end{align*}
By Step 0 of Algorithm \ref{alg: recursive procedure ODE blocks}, we obtain the ODE
\begin{align} \label{eq: ODE lambda transient order 1, 2-dim}
    \frac{\ddiff}{\ddiff t} \bm{\Psi}^{(0,1)}_t =
    \begin{bmatrix}
    -\overline{\alpha}_1 & \ee[B_{12}] \\
    \ee[B_{21}] & -\overline{\alpha}_2
    \end{bmatrix}
    \bm{\Psi}^{(0,1)}_t 
    +
    \begin{bmatrix}
    \alpha_1\overline{\lambda}_1 \\
    \alpha_2\overline{\lambda}_2
    \end{bmatrix},
\end{align}
whose solution gives us an expression for $\bm{\Psi}^{(0,1)}_t = (\ee[\lambda_1(t)], \ee[\lambda_2(t)])^\top$.
We need this $\bm{\Psi}^{(0,1)}_t$ in Step~1, which states
\begin{align} \label{eq: ODE Q transient order 1, 2-dim}
    \frac{\ddiff}{\ddiff t} \bm{\Psi}^{(1,0)}_t =
    \begin{bmatrix}
    -\mu_1 & 0 \\
    0 & -\mu_2
    \end{bmatrix}
    \bm{\Psi}^{(1,0)}_t
    +
    \begin{bmatrix}
    1 & 0 \\
    0 & 1
    \end{bmatrix}
    \bm{\Psi}^{(0,1)}_t,
\end{align}
whose solution yields an expression for $\bm{\Psi}^{(1,0)}_t = (\ee[Q_1(t)], \ee[Q_2(t)])^\top$.
\end{example_text}

\begin{example_text}[second order, bivariate]\label{example: order 2 moments} \em
For $n=2$, we have $\mathfrak{D}(2,2)=10$, and
\begin{align*}
    \bm{\Psi}^{(2)}_t = \big( \bm{\Psi}^{(0,2)}_t, \bm{\Psi}^{(1,1)}_t, \bm{\Psi}^{(2,0)}_t \big)^\top,
\end{align*}
where
\begin{align*}
    \bm{\Psi}^{(0,2)}_t &= \big( \ee\big[ \lambda_1(t)^2\big], \ee\big[ \lambda_1(t)\lambda_2(t)\big], \ee\big[ \lambda_2(t)^2\big]\big)^\top, \\
    \bm{\Psi}^{(1,1)}_t &= \big( \ee\big[ Q_1(t)\lambda_1(t)\big], \ee\big[Q_1(t)\lambda_2(t)\big], \ee\big[ Q_2(t)\lambda_1(t)\big], \ee\big[Q_2(t)\lambda_2(t)\big]\big)^\top, \\
    \bm{\Psi}^{(2,0)}_t &= \big( \ee\big[ Q_1(t)^{[2]}\big], \ee\big[ Q_1(t)Q_2(t)\big], \ee\big[ Q_2(t)^{[2]}\big]\big)^\top.
\end{align*}
For order $n=2$, our objective is to compute $\bm{\Psi}^{(0,2)}_t, \bm{\Psi}^{(1,1)}_t$, and $\bm{\Psi}^{(2,0)}_t$.
Step 0 of Algorithm \ref{alg: recursive procedure ODE blocks} yields
\begin{align} \label{eq: ODE lambda transient order 2, 2-dim}
\begin{split}
    \frac{\ddiff}{\ddiff t} \bm{\Psi}^{(0,2)}_t &=
    \begin{bmatrix}
    -2\overline{\alpha}_1 & 2\ee[B_{12}] & 0 \\
    \ee[B_{21}] & -\overline{\alpha}_1 - \overline{\alpha}_2 & \ee[B_{12}] \\
    0 & 2\ee[B_{21}] & -2\overline{\alpha}_2
    \end{bmatrix}
    \bm{\Psi}^{(0,2)}_t \\
    &\quad +
    \begin{bmatrix}
    2\alpha_1\overline{\lambda}_1 {+}\ee[B_{11}^2] & \ee[B_{12}^2] \\
    \ee[B_{11}]\ee[B_{21}]+\alpha_2\overline{\lambda}_2 & \ee[B_{22}]\ee[B_{12}]+\alpha_1\overline{\lambda}_1 \\
    \ee[B_{21}^2] & 2\alpha_2\overline{\lambda}_2 {+} \ee[B_{22}^2]
    \end{bmatrix}
    \bm{\Psi}^{(0,1)}_t,
\end{split}
\end{align}
which depends on the lower-order vector $\bm{\Psi}^{(0,1)}_t$ (which was found in Example 1).
For Step~1, note that $\bm{\Psi}^{(1,1)}_t$ is a $4$-dimensional vector, which satisfies
\begin{align} \label{eq: ODE Q + lambda transient order 2, 2-dim}
    \frac{\ddiff}{\ddiff t} \bm{\Psi}^{(1,1)}_t &=
    \begin{bmatrix}
    -\overline{\alpha}_1 -\mu_1 & \ee[B_{12}] \\
    \ee[B_{21}] & -\overline{\alpha}_2 -\mu_1
    \end{bmatrix}
    \oplus
    \begin{bmatrix}
    -\overline{\alpha}_1 -\mu_2 & \ee[B_{12}] \\
    \ee[B_{21}] & -\overline{\alpha}_2 -\mu_2
    \end{bmatrix}
    \bm{\Psi}^{(1,1)}_t \\
    &\quad +
    \begin{bmatrix}
    1 & 0 & 0 \\
    0 & 1 & 0 \\
    0 & 1 & 0 \\
    0 & 0 & 1
    \end{bmatrix}
    \bm{\Psi}^{(0,2)}_t
    +
    \begin{bmatrix}
    \alpha_{1}\overline{\lambda}_1 & 0 & \ee[B_{11}] & 0 \\
    \alpha_{2}\overline{\lambda}_2 & 0 & \ee[B_{21}] & 0 \\
    0 & \alpha_{1}\overline{\lambda}_1 & 0 & \ee[B_{12}] \\
    0 & \alpha_{2}\overline{\lambda}_2 & 0 & \ee[B_{22}]
    \end{bmatrix}
    \bm{\Psi}^{(1)}_t, \notag
\end{align}
where we see the dependence on the lower-order stacked vector $\bm{\Psi}^{(1)}_t$  (which was found in Example 1).
Regarding the final step, i.e., Step 2,
\begin{align}\label{eq: ODE Q transient order 2, 2-dim}
    \frac{\ddiff}{\ddiff t} \bm{\Psi}^{(2,0)}_t &=
    \begin{bmatrix}
    -2\mu_1 & 0 & 0 \\
    0 & -\mu_1 - \mu_2 & 0 \\
    0 & 0 & -2\mu_2
    \end{bmatrix}
    \bm{\Psi}^{(2,0)}_t
    +
    \begin{bmatrix}
    2 & 0 & 0 & 0 \\
    0 & 1 & 1 & 0 \\
    0 & 0 & 0 & 2
    \end{bmatrix}
    \bm{\Psi}^{(1,1)}_t.
\end{align}

\end{example_text}

\begin{example_text}[third order, bivariate]\label{example: order 3 moments} \em
For $n=3$, we have $\mathfrak{D}(2,3)=20$, and
\begin{align*}
    \bm{\Psi}^{(3)}_t = \big(\bm{\Psi}^{(0,3)}_t, \bm{\Psi}^{(1,2)}_t, \bm{\Psi}^{(2,1)}_t, \bm{\Psi}^{(3,0)}_t\big)^\top,
\end{align*}
where
\begin{align*}
    \bm{\Psi}^{(0,3)}_t &= \big( \ee\big[ \lambda_1(t)^3\big], \ee\big[ \lambda_1(t)^2\lambda_2(t)\big],\ee\big[ \lambda_1(t)\lambda_2(t)^2\big], \ee\big[ \lambda_2(t)^3\big]\big)^\top, \\
    \bm{\Psi}^{(1,2)}_t &= \big( \ee\big[ Q_1(t)\lambda_1(t)^2\big], \ee\big[Q_1(t)\lambda_1\lambda_2(t)\big], \ee\big[ Q_1(t)\lambda_2(t)^2\big], \\
    &\qquad \ee\big[ Q_2(t)\lambda_1(t)^2\big], \ee\big[ Q_2(t)\lambda_1(t)\lambda_2\big], \ee\big[Q_2(t)\lambda_2(t)^2\big]\big)^\top, \\
    \bm{\Psi}^{(2,1)}_t &= \big( \ee\big[ Q_1(t)^{[2]}\lambda_1(t)\big], \ee\big[Q_1(t)^{[2]}\lambda_2(t)\big], \ee\big[ Q_1(t)Q_2(t)\lambda_1(t)\big], 
    \ee\big[ Q_1(t)Q_2(t)\lambda_2(t)\big], \\
    &\qquad \ee\big[ Q_2(t)^{[2]}\lambda_1(t)\big], \ee\big[Q_2(t)^{[2]}\lambda_2(t)\big]\big)^\top, \\
    \bm{\Psi}^{(3,0)}_t &= \big( \ee\big[ Q_1(t)^{[3]}\big], \ee\big[ Q_1(t)^{[2]}Q_2(t)\big],\ee\big[ Q_1(t)Q_2(t)^{[2]}\big], \ee\big[ Q_2(t)^{[3]}\big]\big)^\top.
\end{align*}

We could explicitly write down the ODEs of these vectors using Algorithm~\ref{alg: recursive procedure ODE blocks}, but the exposition would be rather tedious with large matrices. 
\end{example_text}

\subsection{Transient moments} \label{appendix: transient moments 2-dim}
The goal of this subsection is to find near-explicit expressions for $\bm{\Psi}_t^{(1)}$ and $\bm{\Psi}_t^{(2)}$ by further solving the associated ODEs.
It is clear that we can obtain the solution in terms of a matrix exponential, which can be made more explicit in terms of its eigenvalues, namely
\begin{align} \label{eq: matrix exponential lagrange formula}
    e^{t\bm{M}^{(k,n-k)}} = \sum_{\ell=1}^{\overline{k}} e^{t \eta_\ell^{(k)}} 
    \prod_{\substack{m=1 \\ m \neq \ell}}^{\overline{k}} \frac{\bm{M}^{(k,n-k)} - \eta_m^{(k)} \bm{I}}{\eta_\ell^{(k)} - \eta_m^{(k)}},
\end{align}
with $\overline{k}$ denoting the dimension of $\bm{\Psi}_t^{(k,n-k)}$ and $\eta_1^{(k)},\dots,\eta_{\overline{k}}^{(k)}$ the eigenvalues of $\bm{M}^{(k,n-k)}$, and $\bm{I}$ the identity matrix.

We first consider the transient moments of order $n=1$, evaluating the entries of the stacked vector $\bm{\Psi}_t^{(1)}$, in particular solving the ODE of $\bm{\Psi}_t^{(0,1)}$ as given in Eqn.~\eqref{eq: ODE lambda transient order 1, 2-dim}.
By Proposition \ref{prop: solution vector ODE with matrix exponential}, the solution requires us to find the eigenvalues of the matrix
\begin{align*}
    \bm{M}^{(0,1)} = \begin{bmatrix}
    -\overline{\alpha}_1 & \ee[B_{12}] \\
    \ee[B_{21}] & -\overline{\alpha}_2
    \end{bmatrix},
\end{align*}
so as to compute the matrix exponential $e^{t\bm{M}^{(0,1)}}$.
With $\eta \equiv \eta_1,\eta_2$ denoting the two eigenvalues, we straightforwardly obtain
\begin{align} \label{eq: eigenvalue transient order 1, 2-dim}
    \eta = \frac{1}{2}\big( -\overline{\alpha}_1 -\overline{\alpha}_2 \pm \sqrt{\overline{\alpha}_1^2 - 2\overline{\alpha}_1\overline{\alpha}_2 + \overline{\alpha}_2^2 +4 \ee[B_{12}]\ee[B_{21}]}\big) \equiv \frac{1}{2}\big( -\overline{\alpha}_1 -\overline{\alpha}_2 \pm \sqrt{D_1}\big),
\end{align}
where $\overline{\alpha}_i = \alpha_i - \ee[B_{ii}]$ for $i=1,2$.
We let $\eta_1$ and $\eta_2$ denote the plus- and minus-variant of $\eta$ respectively.
Note that $D_1\geqslant 0$ since it involves a square and $B_{ij}$ are non-negative random variables.
Using Eqn.~\eqref{eq: matrix exponential lagrange formula} and performing some elementary computations, 
\begin{align*}
    e^{t\bm{M}^{(0,1)}} 
    &= \frac{1}{\eta_1 - \eta_2}\Big( e^{t\eta_1}\big( \bm{M}^{(0,1)} - \eta_2 \bm{I}\big) - e^{t\eta_2} \big( \bm{M}^{(0,1)} - \eta_1\bm{I}\big)\Big) \\
    &= \frac{1}{\sqrt{D_1}} \Big( 
    \big\{e^{t\eta_1} - e^{t\eta_2}\big\}
    \begin{bmatrix}
    \frac{1}{2}(\overline{\alpha}_2 - \overline{\alpha}_1) & \ee[B_{12}] \\
    \ee[B_{21}] & \frac{1}{2}(\overline{\alpha}_1 - \overline{\alpha}_2)
    \end{bmatrix}
    + \big\{e^{t\eta_1} + e^{t\eta_2}\big\}
    \begin{bmatrix}
    \frac{1}{2}\sqrt{D_1} & 0 \\
    0 & \frac{1}{2} \sqrt{D_1}
    \end{bmatrix}\Big) \\
    &= \frac{1}{2}\big\{e^{t\eta_1} + e^{t\eta_2}\big\}\bm{I}
    + \frac{1}{\sqrt{D_1}}
    \big\{e^{t\eta_1} - e^{t\eta_2}\big\}
    \begin{bmatrix}
    \frac{1}{2}(\overline{\alpha}_2 - \overline{\alpha}_1) & \ee[B_{12}] \\
    \ee[B_{21}] & \frac{1}{2}(\overline{\alpha}_1 - \overline{\alpha}_2)
    \end{bmatrix};
\end{align*}
we use curly brackets to  distinguish scalar terms from the vectors and matrices.

Next, we consider $\bm{\Psi}^{(1,0)}_t$ for which we need to find the eigenvalues of the matrix $\bm{M}^{(1,0)} = \text{diag}(-\mu_1,-\mu_2)$; cf.\ the ODE in Eqn.~\eqref{eq: ODE Q transient order 1, 2-dim}.
Since this is a diagonal matrix, these are simply $-\mu_1$ and $-\mu_2$, such that the matrix exponential is just
\begin{align*}
    e^{t\bm{M}^{(1,0)}} = \begin{bmatrix}
    e^{-t\mu_1} & 0 \\
    0 & e^{-t\mu_2}
    \end{bmatrix}.
\end{align*}
Before we get to the solution, we define a number of functions needed for the solution of $\bm{\Psi}^{(1,0)}_t$, namely
\begin{align*}
    \bm{u}_1(t)
    &:= \begin{bmatrix}
    (\mu_1+\eta_1)^{-1}(e^{t\eta_1}-e^{-t\mu_1}) + (\mu_1+\eta_2)^{-1}( e^{t\eta_2}-e^{-t\mu_1}) \\
    (\mu_2+\eta_1)^{-1}(e^{t\eta_1}-e^{-t\mu_2}) + (\mu_2+\eta_2)^{-1}( e^{t\eta_2}-e^{-t\mu_2})
    \end{bmatrix}, \\
    \bm{u}_2(t)
    &:= \begin{bmatrix}
    (\mu_1+\eta_1)^{-1}(e^{t\eta_1}-e^{-t\mu_1}) - (\mu_1+\eta_2)^{-1}( e^{t\eta_2}-e^{-t\mu_1}) \\
    (\mu_2+\eta_1)^{-1}(e^{t\eta_1}-e^{-t\mu_2}) - (\mu_2+\eta_2)^{-1}( e^{t\eta_2}-e^{-t\mu_2})
    \end{bmatrix}, \\
    \bm{u}_3(t)
    &:= \begin{bmatrix}
    (\eta_1(\mu_1+\eta_1))^{-1}(e^{t\eta_1}-e^{-t\mu_1}) + (\eta_2(\mu_1+\eta_2))^{-1}( e^{t\eta_2}-e^{-t\mu_1}) \\
    (\eta_1(\mu_2+\eta_1))^{-1}(e^{t\eta_1}-e^{-t\mu_2}) + (\eta_2(\mu_2+\eta_2))^{-1}( e^{t\eta_2}-e^{-t\mu_2})
    \end{bmatrix}, \\
    \bm{u}_4(t)
    &:= \begin{bmatrix}
    (\eta_1(\mu_1+\eta_1))^{-1}(e^{t\eta_1}-e^{-t\mu_1}) - (\eta_2(\mu_1+\eta_2))^{-1}( e^{t\eta_2}-e^{-t\mu_1}) \\
    (\eta_1(\mu_2+\eta_1))^{-1}(e^{t\eta_1}-e^{-t\mu_2}) - (\eta_2(\mu_2+\eta_2))^{-1}( e^{t\eta_2}-e^{-t\mu_2})
    \end{bmatrix}.
\end{align*}

We can now give the first moments explicitly through the application of Proposition \ref{prop: solution vector ODE with matrix exponential}.
Given the initial conditions $\bm{\Psi}_0^{(0,1)} =( \overline{\lambda}_1 , \overline{\lambda}_2)^\top$ and $\bm{\Psi}_0^{(1,0)} = ( 0 , 0 )^\top$, the solutions to the ODEs in \eqref{eq: ODE lambda transient order 1, 2-dim} and \eqref{eq: ODE Q transient order 1, 2-dim} are given by
\begin{align}\label{eq: solution transient moment lambda order 1, 2-dim}
\begin{split}
    \bm{\Psi}_t^{(0,1)} 
    &= e^{t\bm{M}^{(0,1)}}
    \begin{bmatrix}
    \,\overline{\lambda}_1 \\
    \,\overline{\lambda}_2
    \end{bmatrix}
    + \int_0^t e^{(t-s)\bm{M}^{(0,1)}}
    \begin{bmatrix}
    \alpha_1\overline{\lambda}_1 \\
    \alpha_2\overline{\lambda}_2
    \end{bmatrix} \ddiff s \\
    &= \frac{1}{\overline{\alpha}_1\overline{\alpha}_2 - \ee[B_{12}]\ee[B_{21}]}
    \begin{bmatrix}
    \alpha_1\overline{\lambda}_1 \overline{\alpha}_2 + \alpha_2\overline{\lambda}_2\ee[B_{12}] \\
    \alpha_2\overline{\lambda}_2 \overline{\alpha}_1 + \alpha_1\overline{\lambda}_1\ee[B_{21}] \\
    \end{bmatrix} \\
    &\quad + \frac{1}{2}\big\{e^{t\eta_1}+ e^{t\eta_2}\big\}\begin{bmatrix}
    \overline{\lambda}_1 \\
    \overline{\lambda}_2
    \end{bmatrix} 
    + \frac{1}{\sqrt{D_1}}\big\{e^{t\eta_1} - e^{t\eta_2} \big\} \begin{bmatrix}
    \frac{1}{2}\overline{\lambda}_1(\overline{\alpha}_2 - \overline{\alpha}_1) + \overline{\lambda}_2 \ee[B_{12}] \\
    \frac{1}{2}\overline{\lambda}_2(\overline{\alpha}_1 - \overline{\alpha}_2) + \overline{\lambda}_1 \ee[B_{21}]
    \end{bmatrix} \\
    &\quad + \frac{1}{2} \big\{\eta_1^{-1}e^{t\eta_1}+ \eta_2^{-1}e^{t\eta_2}\big\}
    \begin{bmatrix}
    \alpha_1\overline{\lambda}_1 \\
    \alpha_2\overline{\lambda}_2
    \end{bmatrix} \\
    &\quad + \frac{1}{\sqrt{D_1}}\big\{\eta_1^{-1}e^{t\eta_1} - \eta_2^{-1}e^{t\eta_2} \big\}
    \begin{bmatrix}\frac{1}{2}\alpha_1\overline{\lambda}_1(\overline{\alpha}_2 - \overline{\alpha}_1) + \alpha_2\overline{\lambda}_2 \ee[B_{12}] \\
    \frac{1}{2}\alpha_2\overline{\lambda}_2(\overline{\alpha}_1 - \overline{\alpha}_2) + \alpha_1\overline{\lambda}_1 \ee[B_{21}]
    \end{bmatrix},
\end{split}
\end{align}
and
\begin{align} \label{eq: solution transient moment Q order 1, 2-dim}
\begin{split}
    \bm{\Psi}^{(1,0)}_t 
    &=\int_0^t e^{(t-s)\bm{M}^{(1,0)}} \bm{\Psi}_s^{(0,1)} \ddiff s \\
    &= \frac{1}{\overline{\alpha}_1\overline{\alpha}_2 - \ee[B_{12}]\ee[B_{21}]}
    \begin{bmatrix}
    \mu_1^{-1}(1-e^{-t\mu_1}) \\ 
    \mu_1^{-2}(1-e^{-t\mu_2})
    \end{bmatrix}
    \odot
    \begin{bmatrix}
    \alpha_1\overline{\lambda}_1 \overline{\alpha}_2 + \alpha_2\overline{\lambda}_2\ee[B_{12}] \\
    \alpha_2\overline{\lambda}_2 \overline{\alpha}_1 + \alpha_1\overline{\lambda}_1\ee[B_{21}] \\
    \end{bmatrix} \\
    &\quad + \frac{1}{2}
    \bm{u}_1(t)
    \odot
    \begin{bmatrix}
    \,\overline{\lambda}_1 \\
    \,\overline{\lambda}_2
    \end{bmatrix}
    + \frac{1}{\sqrt{D_1}} 
    \bm{u}_2(t)
    \odot
    \begin{bmatrix}
    \frac{1}{2}\overline{\lambda}_1(\overline{\alpha}_2 - \overline{\alpha}_1) + \overline{\lambda}_2 \ee[B_{12}] \\
    \frac{1}{2}\overline{\lambda}_2(\overline{\alpha}_1 - \overline{\alpha}_2) + \overline{\lambda}_1 \ee[B_{21}]
    \end{bmatrix} \\
    &\quad +\frac{1}{2} \bm{u}_3(t) 
    \odot 
    \begin{bmatrix}
    \alpha_1\overline{\lambda}_1 \\
    \alpha_2\overline{\lambda}_2
    \end{bmatrix}
    + \frac{1}{\sqrt{D_1}} \bm{u}_4(t)
    \odot
    \begin{bmatrix}\frac{1}{2}\alpha_1\overline{\lambda}_1(\overline{\alpha}_2 - \overline{\alpha}_1) + \alpha_2\overline{\lambda}_2 \ee[B_{12}] \\
    \frac{1}{2}\alpha_2\overline{\lambda}_2(\overline{\alpha}_1 - \overline{\alpha}_2) + \alpha_1\overline{\lambda}_1 \ee[B_{21}]
    \end{bmatrix}.
\end{split}
\end{align}

Observe that in order for the solution in \eqref{eq: solution transient moment lambda order 1, 2-dim} to remain stable and to obtain finite moments, we need that both eigenvalues are strictly smaller than 0.
By some elementary algebra, it is seen that we should have that
\begin{align}
    \overline{\alpha}_1 \overline{\alpha}_2 > \ee[B_{12}]\ee[B_{21}].
\end{align}
Note that this is the explicit version of the stability condition $\rho(\bm{H})<1$ for the bivariate setting; see Assumption~\ref{ass: stability condition}.
Also note that if one is interested in the Hawkes process $\bm{N}(t) = (N_1(t),N_2(t))^\top$ rather than the population process $\bm{Q}(t) = (Q_1(t),Q_2(t))^\top$, one needs to take $\mu_1 = \mu_2 \equiv 0$.
The corresponding moments $\ee[\bm{N}(t)] = (\ee[N_1(t)],\ee[N_2(t)])^\top$ can be derived from Eqn.~\eqref{eq: solution transient moment Q order 1, 2-dim} by taking the limit $(\mu_1,\mu_2) \downarrow (0,0)$ and the use of L'Hopital's rule.

We now turn to order $2$ and compute elements of the stacked vector $\bm{\Psi}_t^{(2)}$.
As before, we start by considering $\bm{\Psi}_t^{(0,2)}$, the vector containing the (mixed) moments corresponding to $\bm{\lambda}(t)$.
By Proposition \ref{prop: solution vector ODE with matrix exponential}, and recalling that $\bm{\Psi}_t^{(0,2)}$ satisfies the ODE in Eqn.~\eqref{eq: ODE lambda transient order 2, 2-dim}, we need to find the eigenvalues of 
\begin{align*}
    \bm{M}^{(0,2)} =
    \begin{bmatrix}
    -2\overline{\alpha}_1 & 2\ee[B_{12}] & 0 \\
    \ee[B_{21}] & -\overline{\alpha}_1 - \overline{\alpha}_2 & \ee[B_{12}] \\
    0 & 2\ee[B_{21}] & -2\overline{\alpha}_2
    \end{bmatrix}.
\end{align*}
Let $\kappa \equiv \kappa_1,\kappa_2,\kappa_3$ denote the eigenvalues of $\bm{M}^{(0,2)}$.
We compute
\begin{align*}
    &\begin{vmatrix}
    -2\overline{\alpha}_1 - \kappa & 2\ee[B_{12}] & 0 \\
    \ee[B_{21}] & -\overline{\alpha}_1 - \overline{\alpha}_2 - \kappa & \ee[B_{12}] \\
    0 & 2\ee[B_{21}] & -2\overline{\alpha}_2 -\kappa
    \end{vmatrix} 
    = 0 \\
    \iff &(-2\overline{\alpha}_1 - \kappa)
    \begin{vmatrix}
    -\overline{\alpha}_1 - \overline{\alpha}_2 - \kappa & \ee[B_{12}] \\
    2\ee[B_{21}] & -2\overline{\alpha}_2 -\kappa
    \end{vmatrix}
    - 2\ee[B_{12}]
    \begin{vmatrix}
    \ee[B_{21}] & \ee[B_{12}] \\
    0 & -2\overline{\alpha}_2 -\kappa
    \end{vmatrix}
    =0 \\
    \iff & (-2\overline{\alpha}_1 - \kappa)\big( \kappa^2 + 3\overline{\alpha}_2\kappa + 2 \overline{\alpha}_2^2 + 2 \overline{\alpha}_1\overline{\alpha}_2 + \overline{\alpha}_1\kappa - 2\ee[B_{12}]\ee[B_{21}]\big) \\
    & \quad + 2\ee[B_{12}](\ee[B_{21}]\kappa + 2\overline{\alpha}_2\ee[B_{21}] = 0\\
    \iff     
    &\kappa^3 + \kappa^23(\overline{\alpha}_1 + \overline{\alpha}_2 ) + \kappa\big(2(\overline{\alpha}_1     + \overline{\alpha}_2)^2 - 4\ee[B_{12}]\ee[B_{21}]\big) -4(\overline{\alpha}_1 +     \overline{\alpha}_2)\ee[B_{12}]\ee[B_{21}] = 0 \\
    \iff &\kappa^3 + \kappa^2 b + \kappa c + d = 0,
\end{align*}
with $b,c,d$ defined as the constants of the square, linear and constant term respectively.
To apply the formula for the solutions to this cubic equation, we compute $p$ and $q$, given by
\begin{align*}
    p = \frac{1}{3}(3c - b^2) &= 2(\overline{\alpha}_1     + \overline{\alpha}_2)^2 - 4\ee[B_{12}]\ee[B_{21}] - 3((\overline{\alpha}_1 + \overline{\alpha}_2)^2 \\
    &= -(\overline{\alpha}_1 + \overline{\alpha}_2)^2 - 4\ee[B_{12}]\ee[B_{21}], 
\end{align*}
and
\begin{align*}
    q &= \frac{1}{27}\big\{2b^3 - 9bc +27d\big\} \\
    &= \frac{1}{27} \Big\{ 2\big(3(\overline{\alpha}_1 + \overline{\alpha}_2)\big)^3 - 27 (\overline{\alpha}_1 + \overline{\alpha}_2)\big(2(\overline{\alpha}_1 + \overline{\alpha}_2)^2 - 4\ee[B_{12}]\ee[B_{21}]\big) \\
    &\quad \quad \quad -4 \cdot 27(\overline{\alpha}_1 + \overline{\alpha}_2)\ee[B_{12}]\ee[B_{21}]\Big\}\\
    &= 2(\overline{\alpha}_1 + \overline{\alpha}_2)^3 - 2 (\overline{\alpha}_1 + \overline{\alpha}_2)^3 + 4(\overline{\alpha}_1 + \overline{\alpha}_2)\ee[B_{12}]\ee[B_{21}]
    -4(\overline{\alpha}_1  +\overline{\alpha}_2)\ee[B_{12}]\ee[B_{21}]\\
    &= 0.
\end{align*}
It is well-known that the cubic equation has three real roots if $4p^3 + 27q^2 < 0$.
Since $q=0$, the condition becomes $4p^3<0$, which holds since $p< 0$ because of the square term and $\ee[B_{ij}]\geqslant 0$.
Hence, the eigenvalues $\kappa_m$, with $m=1,2,3$, are given by the trigonometric solution 
\begin{align*}
    \kappa_k &= - \frac{b}{3} + 2\sqrt{\frac{-p}{3}}\cos(\theta_m) 
    = -(\overline{\alpha}_1 + \overline{\alpha}_2) +  2 \sqrt{(\overline{\alpha}_1+\overline{\alpha}_2)^2 + 4\ee[B_{12}]\ee[B_{21}]} \cos(\theta_m), \\
    \theta_m &= \frac{1}{3} \arccos\big( \frac{3q}{2p}\sqrt{\frac{3}{-p}}\big)- \frac{2\pi}{3}(m-1)
    =\frac{\pi}{6} - \frac{2\pi}{3}(m-1).
\end{align*}
This yields the eigenvalues
\begin{align*}
    \kappa_1 &= -(\overline{\alpha}_1 + \overline{\alpha}_2) \\
    \kappa_2 &= -(\overline{\alpha}_1 + \overline{\alpha}_2) + \sqrt{3}\sqrt{(\overline{\alpha}_1+\overline{\alpha}_2)^2 + 4\ee[B_{12}]\ee[B_{21}]} = -(\overline{\alpha}_1 + \overline{\alpha}_2) + \sqrt{D_2}\\
    \kappa_3 &= -(\overline{\alpha}_1 + \overline{\alpha}_2) - \sqrt{3}\sqrt{(\overline{\alpha}_1+\overline{\alpha}_2)^2 + 4\ee[B_{12}]\ee[B_{21}]} = -(\overline{\alpha}_1 + \overline{\alpha}_2) - \sqrt{D_2},
\end{align*}
with $D_2 = 3((\overline{\alpha}_1+\overline{\alpha}_2)^2 + 4\ee[B_{12}]\ee[B_{21}])$.
We apply these eigenvalues in the computation of the matrix exponential, as described in Eqn.~\eqref{eq: matrix exponential lagrange formula}, to obtain
\begin{align*}
    &e^{t\bm{M}^{(0,2)}} 
    = e^{\kappa_1 t} \frac{1}{\kappa_1 - \kappa_2}\frac{1}{\kappa_1 - \kappa_3}(\bm{M}^{(0,2)} - \kappa_2\bm{I})(\bm{M}^{(0,2)} - \kappa_3\bm{I}) \\
    &\quad \quad + e^{\kappa_2 t} \frac{1}{\kappa_2 - \kappa_1}\frac{1}{\kappa_2 - \kappa_3}(\bm{M}^{(0,2)} - \kappa_1\bm{I})(\bm{M}^{(0,2)} - \kappa_3\bm{I}) \\ 
    &\quad \quad + e^{\kappa_3 t} \frac{1}{\kappa_3 - \kappa_1}\frac{1}{\kappa_3 - \kappa_2}(\bm{M}^{(0,2)} - \kappa_1\bm{I})(\bm{M}^{(0,2)} - \kappa_2\bm{I}) \\ 
    &= \frac{e^{\kappa_1 t}}{D_2}
    \begin{bmatrix}
    c_{\kappa_1} & -2\ee[B_{12}](\overline{\alpha}_2 - \overline{\alpha}_1) & -2\ee[B_{12}]^2 \\
    -\ee[B_{21}](\overline{\alpha}_2 - \overline{\alpha}_1) & 3(\overline{\alpha}_1 + \overline{\alpha}_2)^2 +8\ee[B_{12}]\ee[B_{21}] & -\ee[B_{12}](\overline{\alpha}_1 - \overline{\alpha}_2) \\
    -2\ee[B_{21}]^2 & -2\ee[B_{21}](\overline{\alpha}_1 - \overline{\alpha}_2) & c_{\kappa_1}
    \end{bmatrix} \\
    &\quad + \frac{e^{\kappa_2 t}}{2D_2}
    \begin{bmatrix}
    c_- & -2\ee[B_{12}](\overline{\alpha}_1 - \overline{\alpha}_2 - \sqrt{D_2}) & 2\ee[B_{12}]^2 \\
    -\ee[B_{21}](\overline{\alpha}_1 - \overline{\alpha}_2 - \sqrt{D_2}) & 4\ee[B_{12}]\ee[B_{21}] & \ee[B_{12}](\overline{\alpha}_1 - \overline{\alpha}_2 + \sqrt{D_2}) \\
    2\ee[B_{21}]^2 & 2\ee[B_{21}](\overline{\alpha}_1 - \overline{\alpha}_2 + \sqrt{D_2}) & c_+
    \end{bmatrix} \\
    &\quad + \frac{e^{\kappa_3 t}}{2D_2}
    \begin{bmatrix}
    c_+ & -2\ee[B_{12}](\overline{\alpha}_1 - \overline{\alpha}_2 + \sqrt{D_2}) & 2\ee[B_{12}]^2 \\
    -\ee[B_{21}](\overline{\alpha}_1 - \overline{\alpha}_2 - \sqrt{D_2}) & 4\ee[B_{12}]\ee[B_{21}] & \ee[B_{12}](\overline{\alpha}_1 - \overline{\alpha}_2 - \sqrt{D_2}) \\
    2\ee[B_{21}]^2 & 2\ee[B_{21}](\overline{\alpha}_1 - \overline{\alpha}_2 - \sqrt{D_2}) & c_-
    \end{bmatrix},
\end{align*}
where 
\begin{align*}
    c_{\kappa_1} &= 2\overline{\alpha}_1^2 + 8\overline{\alpha}_1\overline{\alpha}_2 +2\overline{\alpha}_2^2 + 10\ee[B_{12}]\ee[B_{21}], \\
    c_- &= 2\ee[B_{12}]\ee[B_{21}] + (\overline{\alpha}_1 - \overline{\alpha}_2)(\overline{\alpha}_1 - \overline{\alpha}_2 - \sqrt{D_2}), \\
    c_+ &= 2\ee[B_{12}]\ee[B_{21}] + (\overline{\alpha}_1 - \overline{\alpha}_2)(\overline{\alpha}_1 - \overline{\alpha}_2 + \sqrt{D_2}).
\end{align*}

We now derive the matrix exponential corresponding to $\bm{\Psi}_t^{(1,1)}$, appearing in the ODE given in Eqn.~\eqref{eq: ODE Q + lambda transient order 2, 2-dim}.
Observe that \eqref{eq: ODE Q + lambda transient order 2, 2-dim} reveals that to compute $\bm{\Psi}_t^{(1,1)}$, we need to know the non-homogeneous part of the equation, i.e. $\bm{\Psi}_t^{(0,2)}$, as well as the lower order $(n=1)$ stacked vector $\bm{\Psi}_t^{(1)}$.
Further notice that the $4\times 4$ matrix $\bm{M}^{(1,1)}$ is the direct sum of two $2\times 2$ matrices, which implies that we can split the $4$-dimensional ODE into two $2$-dimensional ODEs.
We introduce the relevant objects by setting
\begin{align*}
    \bm{\Psi}_t^{(1,1)}=  \big(\bm{\Psi}_{t,Q_1}^{(1,1)}, \bm{\Psi}_{t,Q_2}^{(1,1)}\big)^\top,
\end{align*}
where
\begin{align*}
    \bm{\Psi}_{t,Q_1}^{(1,1)} 
    &= \big( \ee\big[ Q_1(t)\lambda_1(t)\big], \ee\big[Q_1(t)\lambda_2(t)\big]\big)^\top, \\
    \bm{\Psi}_{t,Q_2}^{(1,1)} 
    &=\big( \ee\big[ Q_2(t)\lambda_1(t)\big], \ee\big[Q_2(t)\lambda_2(t)\big]\big)^\top.
\end{align*}
Focus on the solution of $\bm{\Psi}_{t,Q_1}^{(1,1)}$, where we note that the solution for $\bm{\Psi}_{t,Q_2}^{(1,1)}$ can be obtained in an analogous manner.
Observe that one can derive from Eqn.~\eqref{eq: ODE Q + lambda transient order 2, 2-dim} that $\bm{\Psi}_{t,Q_1}^{(1,1)}$ satisfies the ODE
\begin{align*}
    \frac{\ddiff}{\ddiff t}\bm{\Psi}_{t,Q_1}^{(1,1)}
    &= 
    \begin{bmatrix}
    -\overline{\alpha}_1 -\mu_1 & \ee[B_{12}] \\
    \ee[B_{21}] & -\overline{\alpha}_2 -\mu_1
    \end{bmatrix}
    \bm{\Psi}_{t,Q_1}^{(1,1)}
    +
    \begin{bmatrix}
    1 & 0 & 0 \\
    0 & 1 & 0
    \end{bmatrix}
    \bm{\Psi}^{(0,2)}_t 
    +
    \begin{bmatrix}
    \alpha_{1}\overline{\lambda}_1 & 0 & \ee[B_{11}] & 0 \\
    \alpha_{2}\overline{\lambda}_2 & 0 & \ee[B_{21}] & 0 
    \end{bmatrix}
    \bm{\Psi}^{(1)}_t.
\end{align*}
This means that we need the matrix exponential of 
\begin{align*}
    \bm{M}_{Q_1}^{(1,1)} =\begin{bmatrix}
    -\overline{\alpha}_1 -\mu_1 & \ee[B_{12}] \\
    \ee[B_{21}] & -\overline{\alpha}_2 -\mu_1
    \end{bmatrix},
\end{align*}
which requires us to find the corresponding two eigenvalues, denoted by $\gamma^{(Q_1)}_1$ and $\gamma^{(Q_1)}_2$.
These eigenvalues are similarly derived as in Eqn.~\eqref{eq: eigenvalue transient order 1, 2-dim}, in this case given by
\begin{align*}
    \gamma_1^{(Q_1)} 
    = -\mu_1 + \eta_1, \quad 
    \gamma_2^{(Q_1)} 
    = -\mu_1 + \eta_2,
\end{align*}
with $\eta_1 = \frac{1}{2}\big( -\overline{\alpha}_1 - \overline{\alpha}_2  + \sqrt{D_1}\big)$ and $\eta_2 = \frac{1}{2}\big( -\overline{\alpha}_1 - \overline{\alpha}_2  - \sqrt{D_1}\big)$.
Substituting this in the Lagrange interpolation formula of Eqn.~\eqref{eq: matrix exponential lagrange formula}, we obtain
\begin{align*}
    e^{t\bm{M}_{Q_1}^{(1,1)}} 
    &= \frac{1}{\gamma^{(Q_1)}_1 - \gamma^{(Q_1)}_2}\Big\{e^{t\gamma^{(Q_1)}_1}\big(\bm{M}_{Q_1}^{(1,1)} - \gamma^{(Q_1)}_2\bm{I}_2\big) 
    -e^{t\gamma^{(Q_1)}_2}\big( \bm{M}_{Q_1}^{(1,1)} - \gamma^{(Q_1)}_1\bm{I}_2\big)\Big\} \\
    &= \frac{1}{2}\big\{e^{t\gamma^{(Q_1)}_1} + e^{t\gamma^{(Q_1)}_2}\big\} \bm{I} 
    + \frac{1}{\sqrt{D_1}}\big\{e^{t\gamma^{(Q_1)}_1} - e^{t\gamma^{(Q_1)}_2}\big\} 
    \begin{bmatrix}
    \frac{1}{2}(\overline{\alpha}_2 - \overline{\alpha}_1) & \ee[B_{12}] \\
    \ee[B_{21}] & \frac{1}{2}(\overline{\alpha}_1 -  \overline{\alpha}_2)
    \end{bmatrix}.
\end{align*}

In a very similar manner, the matrix exponential needed to evaluate $\bm{\Psi}_{t,Q_2}^{(1,1)}$ can be obtained from the ODE in Eqn.~\eqref{eq: ODE Q + lambda transient order 2, 2-dim}, and is given by
\begin{align*}
    e^{t\bm{M}_{Q_2}^{(1,1)}} 
    &= \frac{1}{\gamma^{(Q_2)}_1 - \gamma^{(Q_2)}_2}\Big\{e^{t\gamma^{(Q_2)}_1}\big(\bm{M}_{Q_2}^{(1,1)} - \gamma^{(Q_2)}_2\bm{I}_2\big) 
    -e^{t\gamma^{(Q_2)}_2}\big( \bm{M}_{Q_2}^{(1,1)} - \gamma^{(Q_2)}_1\bm{I}_2\big)\Big\} \\
    &= \frac{1}{2}\big\{e^{t\gamma^{(Q_2)}_1} + e^{t\gamma^{(Q_2)}_2}\big\} \bm{I} 
    + \frac{1}{\sqrt{D_1}}\big\{e^{t\gamma^{(Q_2)}_1} - e^{t\gamma^{(Q_2)}_2}\big\} 
    \begin{bmatrix}
    \frac{1}{2}(\overline{\alpha}_2 - \overline{\alpha}_1) & \ee[B_{12}] \\
    \ee[B_{21}] & \frac{1}{2}(\overline{\alpha}_1 -  \overline{\alpha}_2)
    \end{bmatrix},
\end{align*}
with $\gamma_1^{(Q_2)} = -\mu_2 + \eta_1$ and $\gamma_2^{(Q_2)} = -\mu_2 + \eta_2$.

Finally, for the solution of $\bm{\Psi}_t^{(2,0)}$, the vector containing the mixed factorial moments of $\bm{Q}(t)$, the matrix exponential of $\bm{M}^{(2,0)} = \text{diag}(-2\mu_1,-\mu_1-\mu_2,-2\mu_2)$ is simply
\begin{align*}
    e^{t\bm{M}^{(2,0)}} =
    \begin{bmatrix}
    e^{-2t\mu_1} & 0 & 0 \\
    0 & e^{-t(\mu_1 + \mu_2)} & 0 \\
    0 & 0 & e^{-2t\mu_2}
    \end{bmatrix}.
\end{align*}

Applying Proposition \ref{prop: solution vector ODE with matrix exponential} to $ \bm{\Psi}_t^{(0,2)}$, $ \bm{\Psi}_t^{(1,1)}$ and $ \bm{\Psi}_t^{(2,0)}$, we obtain the following result.
Given the initial conditions $\bm{\Psi}_0^{(0,2)} = ( \overline{\lambda}_1^2 , \overline{\lambda}_1\overline{\lambda}_2 , \overline{\lambda}_2^2 )^\top$, $\bm{\Psi}_{0,Q_1}^{(1,1)} = ( 0, 0 )^\top$, $\bm{\Psi}_{0,Q_2}^{(1,1)} = ( 0, 0 )^\top$ and $\bm{\Psi}_0^{(2,0)} = ( 0 , 0 , 0)^\top$, the solutions to the ODEs in Eqns.~\eqref{eq: ODE lambda transient order 2, 2-dim}, \eqref{eq: ODE Q + lambda transient order 2, 2-dim} and \eqref{eq: ODE Q transient order 2, 2-dim}, are given by, respectively,
\begin{align} \label{eq: solution transient moment lambda order 2, 2-dim}
\begin{split}
    \bm{\Psi}_t^{(0,2)}
    &= e^{t\bm{M}^{(0,2)}} \bm{\Psi}_0^{(0,2)} \\
    & \quad + \int_0^t e^{(t-s)\bm{M}^{(0,2)}}     \begin{bmatrix}
    2\alpha_1\overline{\lambda}_1 {+}\ee[B_{11}^2] & \ee[B_{12}^2] \\
    \ee[B_{11}]\ee[B_{21}] + \alpha_2\overline{\lambda}_2 & \ee[B_{22}]\ee[B_{12}] + \alpha_1\overline{\lambda}_1 \\
    \ee[B_{21}^2] & 2\alpha_2\overline{\lambda}_2 {+} \ee[B_{22}^2]
    \end{bmatrix}
    \bm{\Psi}^{(0,1)}_s \ddiff s,
\end{split}
\end{align}
and $\bm{\Psi}_t^{(1,1)} = ( \bm{\Psi}_{t,Q_1}^{(1,1)}, \bm{\Psi}_{t,Q_2}^{(1,1)})^\top$, with
\begin{align}
\begin{split}
    \bm{\Psi}_{t,Q_1}^{(1,1)} 
    &= \int_0^t  e^{(t-s)\bm{M}_{Q_1}^{(1,1)}}\Big\{\begin{bmatrix}
    1 & 0 & 0 \\
    0 & 1 & 0
    \end{bmatrix}
    \bm{\Psi}^{(0,2)}_s
    +
    \begin{bmatrix}
    \alpha_{1}\overline{\lambda}_1 & 0 & \ee[B_{11}] & 0 \\
    \alpha_{2}\overline{\lambda}_2 & 0 & \ee[B_{21}] & 0 
    \end{bmatrix}
    \bm{\Psi}^{(1)}_s\Big\}\ddiff s \\
    \bm{\Psi}_{t,Q_2}^{(1,1)} 
    &= \int_0^t  e^{(t-s)\bm{M}_{Q_2}^{(1,1)}}\Big\{\begin{bmatrix}
    0 & 1 & 0 \\
    0 & 0 & 1
    \end{bmatrix}
    \bm{\Psi}^{(0,2)}_s
    +
    \begin{bmatrix}
    0 & \alpha_{1}\overline{\lambda}_1 & 0 & \ee[B_{12}] \\
    0 & \alpha_{2}\overline{\lambda}_2 & 0 & \ee[B_{22}]
    \end{bmatrix}
    \bm{\Psi}^{(1)}_s\Big\}\ddiff s 
\end{split}
\end{align}
and 
\begin{align}
\begin{split}
    \bm{\Psi}_t^{(2,0)} 
    &= \int_0^t e^{(t-s)\bm{M}^{(2,0)}} \begin{bmatrix}
    2 & 0 & 0 & 0 \\
    0 & 1 & 1 & 0 \\
    0 & 0 & 0 & 2
    \end{bmatrix}
    \bm{\Psi}^{(1,1)}_s \ddiff s \\
    &= \int_0^t
    \begin{bmatrix}
    2 e^{-2(t-s)\mu_1}\ee[Q_1(s)\lambda_1(s)] \\
    e^{-(t-s)(\mu_1 + \mu_2)}\big(\ee[Q_1(s)\lambda_2(s)] + \ee[Q_2(s)\lambda_1(s)]\big) \\
    2  e^{-2(t-s)\mu_2}\ee[Q_2(s)\lambda_2(s)]
    \end{bmatrix} \ddiff s.
\end{split}
\end{align}

We remark that explicit evaluation of the expressions for the second-order moments is tedious, but in principle possible.
For instance, it is evident that substituting the matrix exponential of $\bm{M}^{(0,2)}$ and the lower-order solution $\bm{\Psi}_s^{(0,1)}$ will yield rather involved expressions.
For higher order moments, these expressions become even more complex.
However, the near-explicit solution in terms of matrix exponentials and lower order terms is useful for practical purposes.
Due to the availability of fast and robust algorithms for the matrix exponential and the solution of ODEs, it is relatively straightforward to numerically compute higher-order moments.

\subsection{Stationary moments} \label{appendix: stationary moments 2-dim}

This subsection deals with the application of Algorithm \ref{alg: recursive procedure stationary blocks}  to evaluate the stationary moments $\bm{\Psi}^{(1)}$ and $\bm{\Psi}^{(2)}$.

We start by computing the stationary moments of order $1$.
By Step 1 of Algorithm \ref{alg: recursive procedure stationary blocks}, $\bm{\Psi}^{(0,1)}$ satisfies the linear equation
\begin{align} 
    0 &= 
    \begin{bmatrix}
    -\overline{\alpha}_1 & \ee[B_{12}] \\
    \ee[B_{21}] & -\overline{\alpha}_2
    \end{bmatrix}
    \bm{\Psi}^{(0,1)} 
    +
    \begin{bmatrix}
    \alpha_1\overline{\lambda}_1 \\
    \alpha_2\overline{\lambda}_2
    \end{bmatrix}, \notag 
\end{align}
yielding
\begin{align} \label{eq: solution stationary moment lambda order 1, 2-dim}
    \bm{\Psi}^{(0,1)} &= \frac{1}{\overline{\alpha}_1\overline{\alpha}_2 - \ee[B_{12}]\ee[B_{21}]}
    \begin{bmatrix}
    \alpha_1\overline{\lambda}_1\overline{\alpha}_2 + \alpha_2\overline{\lambda}_2\ee[B_{12}] \\
    \alpha_2\overline{\lambda}_2\overline{\alpha}_1 + \alpha_1\overline{\lambda}_1\ee[B_{21}]
    \end{bmatrix}.
\end{align}
Then, regarding Step 2, we find after some calculus
\begin{align} \label{eq: solution stationary moment Q order 1, 2-dim}
    \bm{\Psi}^{(1,0)} &= \frac{1}{\overline{\alpha}_1\overline{\alpha}_2 - \ee[B_{12}]\ee[B_{21}]}
    \begin{bmatrix}
    \mu_1^{-1}\big(\alpha_1\overline{\lambda}_1\overline{\alpha}_2 + \alpha_2\overline{\lambda}_2\ee[B_{12}]\big) \\
    \mu_2^{-1}\big(\alpha_2\overline{\lambda}_2\overline{\alpha}_1 + \alpha_1\overline{\lambda}_1\ee[B_{21}]\big).
    \end{bmatrix}
\end{align}
We have thus found the stacked vector $\bm{\Psi}^{(1)}$.
These expressions could also have been derived by sending $t\to\infty$  in the expressions of the transient moments $\bm{\Psi}_t^{(0,1)}$ and $\bm{\Psi}_t^{(1,0)}$, respectively.

To identify the second order stationary moments, we again go over the steps of Algorithm~\ref{alg: recursive procedure stationary blocks}.
Step~0 yields
\begin{align} \label{eq: solution stationary moment lambda order 2, 2-dim}
\begin{split}
    \bm{\Psi}^{(0,2)} &=
    \begin{bmatrix}
    2\overline{\alpha}_1 & -2\ee[B_{12}] & 0 \\
    -\ee[B_{21}] & \overline{\alpha}_1 + \overline{\alpha}_2 & -\ee[B_{12}] \\
    0 & -2\ee[B_{21}] & 2\overline{\alpha}_2
    \end{bmatrix}^{-1} \\
    &\quad \times
    \begin{bmatrix}
    \ee[\lambda_1](2\alpha_1\overline{\lambda}_1 {+}\ee[B_{11}^2]) + \ee[\lambda_2]\ee[B_{12}^2] \\
    \ee[\lambda_1](\ee[B_{11}]\ee[B_{21}] + \alpha_2\overline{\lambda}_2) + \ee[\lambda_2]( \ee[B_{22}]\ee[B_{12}] +\alpha_1\overline{\lambda}_1) \\
    \ee[\lambda_1]\ee[B_{21}^2] + \ee[\lambda_2]( 2\alpha_2\overline{\lambda}_2 {+} \ee[B_{22}^2])
    \end{bmatrix}
\end{split}
\end{align}
where the inverse may be explicitly computed in specific cases.
For Step 1, we have, after some elementary matrix computations, that
\begin{align} \label{eq: solution stationary moment Q + lambda order 2, 2-dim}
    \bm{\Psi}^{(1,1)}&=
    \begin{bmatrix}
    \overline{\alpha}_1 +\mu_1 & -\ee[B_{12}] & 0 & 0 \\
    -\ee[B_{21}] & \overline{\alpha}_2 +\mu_1 & 0 & 0 \\
    0 & 0 & \overline{\alpha}_1 +\mu_2 & -\ee[B_{12}] \\
    0 & 0 & -\ee[B_{21}] & \overline{\alpha}_2 +\mu_2
    \end{bmatrix}^{-1}
    \begin{bmatrix}
    \ee[\lambda_1^2] + \alpha_1\overline{\lambda}_1\ee[Q_1] + \ee[B_{11}]\ee[\lambda_1] \\
    \ee[\lambda_1\lambda_2] + \alpha_2\overline{\lambda}_2\ee[Q_1] + \ee[B_{21}]\ee[\lambda_1] \\
    \ee[\lambda_1\lambda_2] + \alpha_1\overline{\lambda}_1\ee[Q_2] + \ee[B_{12}]\ee[\lambda_2] \\
    \ee[\lambda_2^2] + \alpha_2\overline{\lambda}_2\ee[Q_2] + \ee[B_{22}]\ee[\lambda_2]
    \end{bmatrix}.
\end{align}
Finally, for Step 2 we have
\begin{align} \label{eq: solution stationary moment Q order 2, 2-dim}
    \bm{\Psi}^{(2,0)} 
    =
    \begin{bmatrix}
    1/(2\mu_1) & 0 & 0 \\
    0 & 1/(\mu_1 + \mu_2) & 0 \\
    0 & 0 & 1/(2\mu_2)
    \end{bmatrix}
    \begin{bmatrix}
    2 & 0 & 0 & 0 \\
    0 & 1 & 1 & 0 \\
    0 & 0 & 0 & 2
    \end{bmatrix}
    \bm{\Psi}^{(1,1)}.
\end{align}

In line with earlier observations, the quasi-explicit results for the transient and stationary moments become involved for larger values of the order $n$.

\subsection{Higher order moments} \label{appendix: higher order moments 2-dim}

In this section, we provide some more detailed explicit matrices discussed Section~\ref{sec: nested block matrices}, for the bivariate $d=2$ setting for moments of order $n=3$.
For $n=3$ we have $\mathfrak{D}(3,2) = 20$, and we consider the stacked vector (size $34$)
\begin{align*}
    \big( \bm{\Psi}^{(3)}_t, \bm{\Psi}^{(2)}_t, \bm{\Psi}^{(1)}_t\big)^\top,
\end{align*}
which satisfies the ODE
\begin{align}
    \frac{\ddiff}{\ddiff t} 
    \begin{bmatrix}
    \bm{\Psi}^{(3)}_t \\
    \bm{\Psi}^{(2)}_t \\
    \bm{\Psi}^{(1)}_t
    \end{bmatrix}
    = \bm{A}_3^{35\times 35} \begin{bmatrix}
    \bm{\Psi}^{(3)}_t \\
    \bm{\Psi}^{(2)}_t \\
    \bm{\Psi}^{(1)}_t
    \end{bmatrix}
    +
    \begin{bmatrix}
    \bm{b}^{2\times 1} \\
    \bm{0}^{32 \times 1}
    \end{bmatrix}.
\end{align}
The matrix $\bm{F}_3^{34\times 34}$ is given by
\begin{align}
    \bm{F}_3^{34\times 34} = \begin{bmatrix}
    \bm{F}_2^{14\times 14} & \bm{0}^{14 \times 20} \\
    \bm{G}_3^{20 \times 14} & \bm{H}_3^{14 \times 14}
    \end{bmatrix},
\end{align}
where 
\begin{align*}
    \bm{H}_3^{14\times 14} = \begin{bmatrix}
    \bm{M}^{(3,0)} & \bm{0}^{4\times 6} & \bm{0}^{4\times 6} & \bm{0}^{4\times 4} \\
    \bm{K}^{(2,1)} & \bm{M}^{(2,1)} & \bm{0}^{6 \times 6} & \bm{0}^{6 \times 4} \\
    \bm{0}^{6 \times 4} & \bm{K}^{(1,2)} & \bm{M}^{(1,2)} & \bm{0}^{6 \times 4} \\
    \bm{0}^{4 \times 4} & \bm{0}^{4 \times 6} & \bm{K}^{(0,3)} & \bm{M}^{(0,3)}
    \end{bmatrix},
    \quad \bm{G}_3^{20\times 14}
    = \begin{bmatrix}
    \bm{L}^{(0,3)} \\
    \bm{L}^{(1,2)} \\
    \bm{L}^{(2,1)} \\
    \bm{0}^{4 \times 20}
    \end{bmatrix}.
\end{align*}
The elements of $\bm{G}_3^{20\times 20}$ require some more notation to describe, by introducing a number of sub-matrices.
First, we have $\bm{L}^{(0,3)} = \big[ \bm{L}_{\lambda^1}^{(0,3)} \:\: \bm{0}^{4\times 2} \:\: \bm{L}_{\lambda^2}^{(0,3)} \:\: \bm{0}^{4 \times 7} \big]$, with
\begin{align*}
    \bm{L}_{\lambda^1}^{(0,3)} &= \begin{bmatrix}
    \ee[B_{11}^3] & \ee[B_{12}^3] \\
    \ee[B_{11}^2]\ee[B_{21}] & \ee[B_{12}^2]\ee[B_{22}] \\
    \ee[B_{11}]\ee[B_{21}^2] & \ee[B_{12}]\ee[B_{22}^2] \\
    \ee[B_{21}^3] & \ee[B_{22}^3]
    \end{bmatrix}, \\
    \bm{L}_{\lambda^2}^{(0,3)} &= \begin{bmatrix}
    3\ee[B_{11}^2] + 3\alpha_1\overline{\lambda}_1 & 3\ee[B_{12}^2] & 0 \\
    2\ee[B_{11}]\ee[B_{21}] + \alpha_{2}\overline{\lambda}_2 & 2\ee[B_{12}]\ee[B_{22}] + \ee[B_{11}^2] + 2\alpha_1\overline{\lambda}_1 & \ee[B_{12}^2]  \\
    \ee[B_{21}^2] & 2\ee[B_{21}]\ee[B_{11}] + \ee[B_{22}^2] + 2\alpha_2\overline{\lambda}_2 & 2\ee[B_{12}]\ee[B_{22}] + \alpha_1\overline{\lambda}_1 \\
    0 & 3\ee[B_{21}^2] & 3\ee[B_{22}^2] + 3\alpha_2\overline{\lambda}_2
    \end{bmatrix}.
\end{align*}
Second, we have $\bm{L}^{(1,2)} = \big[\bm{L}_{\lambda^1}^{(1,2)} \:\: \bm{0}^{6 \times 2} \:\: \bm{L}_{\lambda^2}^{(1,2)} \:\: \bm{L}_{Q^1\lambda^1}^{(1,2)} \:\: \bm{0}^{6 \times 3} \big]$, where
\begin{align*}
    &\bm{L}_{\lambda^1}^{(1,2)} = \begin{bmatrix}
    \ee[B_{11}^2] & 0 \\
    \ee[B_{11}]\ee[B_{21}] & 0 \\
    \ee[B_{21}^2] & 0 \\
    0 & \ee[B_{12}^2] \\
    0 & \ee[B_{12}]\ee[B_{22}] \\
    0 & \ee[B_{22}^2]
    \end{bmatrix}, \quad 
    \bm{L}_{\lambda^2}^{(1,2)} = \begin{bmatrix}
    2\ee[B_{11}] & 0 & 0 \\
    \ee[B_{21}] & \ee[B_{11}] & 0 \\
    0 & 2\ee[B_{21} & 0 \\
    0 & 2\ee[B_{12}] & 0 \\
    0 & \ee[B_{22}] & \ee[B_{12}] \\
    0 & 0 & 2\ee[B_{22}]
    \end{bmatrix}, \quad \\
    \bm{L}_{Q^1L^1}^{(1,2)}
    &= \begin{bmatrix}
    \ee[B_{11}^2] + 2\alpha_1\overline{\lambda}_1 & \ee[B_{12}^2] & 0 & 0 \\
    \ee[B_{11}]\ee[B_{21}] + \alpha_2\overline{\lambda}_2 & \ee[B_{12}]\ee[B_{22}] + \alpha_1\overline{\lambda}_1 & 0 & 0 \\
    \ee[B_{21}^2] & \ee[B_{22}^2] + 2\alpha_2\overline{\lambda}_2 & 0 & 0 \\
    0 & 0 & \ee[B_{11}^2] + 2\alpha_1\overline{\lambda}_1 & \ee[B_{12}^2] \\
    0 & 0 & \ee[B_{11}]\ee[B_{21}] + \alpha_2\overline{\lambda}_2 & \ee[B_{12}]\ee[B_{22}] +
    \alpha_1\overline{\lambda}_1 \\
    0 & 0 & \ee[B_{21}^2] & \ee[B_{22}^2] + 2\alpha_2\overline{\lambda}_2  \\
    \end{bmatrix}.
\end{align*}
Finally, $\bm{L}^{(2,1)} = \big[ \bm{0}^{6\times 7} \:\: \bm{L}_{Q^1L^1}^{(2,1)} \:\: \bm{L}_{Q^2}^{(2,1)} \big]$, where
\begin{align*}
    \bm{L}_{Q^1L^1}^{(2,1)} &= \begin{bmatrix}
    2\ee[B_{11}] & 0 & 0 & 0 \\
    2\ee[B_{21}] & 0 & 0 & 0 \\
    0 & \ee[B_{12}] & \ee[B_{11}] & 0 \\
    0 & \ee[B_{22}] & \ee[B_{21}] & 0 \\
    0 & 0 & 0 & 2\ee[B_{12}] \\
    0 & 0 & 0 & 2\ee[B_{22}]
    \end{bmatrix},
    \quad \bm{L}_{Q^2}^{(2,1)} = \begin{bmatrix}
    \alpha_1\overline{\lambda}_1 & 0 & 0 \\
    \alpha_2\overline{\lambda}_2 & 0 & 0 \\
    0 & \alpha_1\overline{\lambda}_1 & 0 \\
    0 & \alpha_2\overline{\lambda}_2 & 0 \\
    0 & 0 & \alpha_1\overline{\lambda}_1 \\ 
    0 & 0 & \alpha_2\overline{\lambda}_2
    \end{bmatrix}.
\end{align*}

\section{The Nearly Unstable Behavior: Proofs} \label{appendix: nearly unstable}

\begin{proof}[Proof of Lemma \ref{lemma: near unstable}]
From the PDE in Eqn.~\eqref{eq: PDE zeta rewritten} and the assumptions imposed,
\begin{align*}
    \sum_{i=1}^d \big(\alpha s_i + \beta_i(\overline{s}) - 1\big) \frac{\ddiff}{\ddiff s_i} \cT\{\bm{\lambda}\}(\bm{s}) = - \alpha \overline{\lambda} \sum_{i=1}^d  s_i  \cT\{\bm{\lambda}\}(\bm{s}),
\end{align*}
upon substituting $z_1 = \cdots = z_d = 1$.
Further observe that for any $i,j\in[d]$, we have
\begin{align*}
    \frac{\ddiff}{\ddiff s_i} \cT\{\bm{\lambda}\}(\bm{s}) 
    = \ee\big[-\lambda_i e^{-\bm{s}^\top \bm{\lambda}}\big] 
    =  \ee\big[-\lambda_j e^{-\bm{s}^\top \bm{\lambda}}\big] 
    =\frac{\ddiff}{\ddiff s_j} \cT\{\bm{\lambda}\}(\bm{s}),
\end{align*}
since $\lambda_i \overset{d}{=} \lambda_j$, again by our assumptions on the parameters.
Hence, we obtain the ODE
\begin{align}
    \frac{\ddiff }{\ddiff s_1} \cT\{\bm{\lambda}\}(\bm{s})
    = \frac{ -\alpha \overline{\lambda} \overline{s}}{\alpha\overline{s} + \sum_{i=1}^d \beta_i(\overline{s}) -d} \cT\{\bm{\lambda}\}(\bm{s}) =: -f(s_1,\dots,s_d)) \cT\{\bm{\lambda}\}(\bm{s}),
\end{align}
where we note that the choice of the index in the left-hand side is arbitrary.
The solution to this ODE may be expressed as
\begin{align} \label{eq: log solution LT lambda steady-state}
    \log\big( \cT\{\bm{\lambda}\}(\bm{s})\big) = - \int_0^{s_1} f(u, s_2,\dots,s_d) \ddiff u + K, \quad K = \log(\cT\{\bm{\lambda}\}(0, s_2,\dots,s_d)).
\end{align}
Since $\lambda_i \overset{d}{=} \lambda_j$, the Laplace transforms of the marginals satisfy, for any $s\in\rr_+$, 
\begin{align*}
    \cT\{\bm{\lambda}\}(s,0,\dots,0) 
    = \cT\{\bm{\lambda}\}(0,s,0,\dots,0) = \cdots = \cT\{\bm{\lambda}\}(0,0,\dots,s).
\end{align*}
We are then able to derive the joint Laplace transform of, say, $(\lambda_1,\lambda_2)^\top$ from Eqn.~\eqref{eq: log solution LT lambda steady-state}:
\begin{align} \label{eq: procedure for solution LT lambda steady-state}
\begin{split}
    &\cT\{\bm{\lambda}\}(s_1,s_2,\dots,0) \\
    &= \exp\Big(- \int_0^{s_1} f(u, s_2,\dots,0) \ddiff u \Big) \cT\{\bm{\lambda}\}(0,s_2,\dots,0) \\
    &=\exp\Big(-\alpha\overline{\lambda} \int_0^{s_1} \frac{u + s_2}{\alpha(u+s_2) + \sum_{i=1}^d \beta_i(u+s_2) - d}\ddiff u\Big)\\
    &\quad \times \exp\Big(-\alpha \overline{\lambda} \int_0^{s_2} \frac{u}{\alpha u + \sum_{i=1}^d \beta_i(u) - d} \ddiff u \Big) \\
    &= \exp\Big(-\alpha\overline{\lambda} \big( \int_{s_2}^{s_1+s_2} \frac{v}{\alpha v + \sum_{i=1}^d \beta_i(v) - d}\ddiff v +\int_0^{s_2} \frac{u}{\alpha u + \sum_{i=1}^d \beta_i(u) - d} \ddiff u\big) \Big) \\
    &= \exp\Big(-\alpha\overline{\lambda} \int_{0}^{s_1+s_2} \frac{v}{\alpha v + \sum_{i=1}^d \beta_i(v) - d}\ddiff v\Big).
\end{split}
\end{align}
By symmetry, we can do this for any pair $(\lambda_i,\lambda_j)^\top$, with $i,j\in[d]$.
Iterating the derivation in Eqn.~\eqref{eq: procedure for solution LT lambda steady-state}, we obtain the full solution
\begin{align*}
    \cT\{\bm{\lambda}\}(\bm{s}) 
    &= \exp\Big(- \int_0^{s_1} f(u, s_2,\dots,s_d) \ddiff u \Big) \cT\{\bm{\lambda}\}(0,s_2,\dots,s_d) \\
    &= \exp\Big(-\alpha \overline{\lambda} \int_0^{s_1+\dots+s_d} \frac{u}{\alpha u + \sum_{i=1}^d \beta_i(u) - d} \ddiff u \Big),
\end{align*}
as desired.
\end{proof}

\begin{proof}[Proof of Theorem \ref{thm: heavy traffic lambda thm}]
The proof follows from the expression for $\cT\{\bm{\lambda}\}(\bm{s})$ in Eqn.~\eqref{eq: lemma statement solution LT lambda steady-state}, applying a Taylor expansion to the $\beta_i(\cdot)$, and computing the limit.
Since the second moments of $B_i$ exist, we have $\beta_i(u) = 1- u\ee[B_i] + \frac{1}{2}u^2\ee[B_i^2] + o(u^2)$ as $u\downarrow 0$.
Substituting $\bm{s}(1-\theta)$ as the argument in Eqn.~\eqref{eq: lemma statement solution LT lambda steady-state} and the Taylor expansion of $\beta_i(\cdot)$, we obtain as $\theta\uparrow 1$, that
\begin{align*}
    \cT\{\bm{\lambda}\}(\bm{s}(1-\theta))
    &= \exp\Big(-\alpha\overline{\lambda} \int_0^{\overline{s}} \frac{u(1-\theta)}{\alpha u(1-\theta) + \sum_{i=1}^d \beta_i(u(1-\theta)) - d} (1-\theta)\ddiff u\Big) \\
    &= \exp\Big(-\overline{\lambda} \int_0^{\overline{s}} \frac{\alpha(1-\theta)}{\alpha - \sum_{i=1}^d\ee[B_i] + \frac{u}{2}(1-\theta)\sum_{i=1}^d\ee[B_i^2] + o(1-\theta)}\ddiff u\Big) \\
    &=  \exp\Big(-\overline{\lambda} \int_0^{\overline{s}} \frac{1}{1 + \frac{u}{2\alpha}\sum_{i=1}^d\ee[B_i^2] + o(1)}\ddiff u\Big).
\end{align*}
Finally, by definition of $\sigma$ and an elementary computation, we have
\begin{align*}
    \lim_{\theta \uparrow 1} \cT\{\bm{\lambda}\}(\bm{s}(1-\theta))
    = \lim_{\theta \uparrow 1} \exp\Big(-\overline{\lambda} \int_0^{\overline{s}} \frac{1}{1 + u\sigma^{-1} + o(1)}\ddiff u\Big)
    =\Big( \frac{\sigma}{\sigma + \overline{s}}\Big)^{\sigma\overline{\lambda}},
\end{align*} as claimed.
\end{proof}

\section{Experiments with Transient Moments}\label{appendix: numerics}
This appendix demonstrates the numerical evaluation of the time-dependent moments featured in Section~\ref{sec: characterization}.
From the joint transform characterization in Theorem~\ref{thm: joint transform characterization t, t+tau}, we compute the mixed moments, for any $t\geqslant 0$, $\tau>0$ and any combination $i,j=1,2$, 
\begin{align} \label{eq: Q ttau and L ttau expression}
    \ee[Q_i(t)\,Q_j(t+\tau)], \quad 
    \ee[\lambda_i(t)\,\lambda_j(t+\tau)],
\end{align}
as before 
using finite differences. It is noted that along the same lines objects of the type $\ee[Q_i(t)\,\lambda_j(t+\tau)]$ can be evaluated, and in addition various types of auto-covariances and auto-correlations (cf.\ \cite{DFZ15} for the auto-covariance in the market micro-structure setting).
The joint transform characterization allows for efficient and fast computation of these cross-moments, also for large $t>0$, which makes it practical in these settings.

To assess the efficiency and precision, we have conducted a numerical experiment with the same parameters as earlier in this subsection.
To analyze the effect of the $\tau$ parameter in~\eqref{eq: Q ttau and L ttau expression}, in Figure~\ref{fig: F3} we fix $t = 1.5$ and plot the quantities of interest as functions of $\tau$.
The solid lines are the moments computed by applying FD to the joint transform, and the dotted lines represent the results from the MC method (based on $10^4$ runs).
We see that MC performs increasingly poorly as $\tau$ increases, in particular for the population processes $Q_i(\cdot)$, which is due to the fact that there are more events (i.e., arrivals and departures) for larger $\tau$, and hence more variation.
Furthermore, the different shapes in the plots indicate that the effect of $\tau$ on the specific cross-moment depends on the chosen parameters.

\begin{figure}
\centering
\resizebox{15.9cm}{5.5cm}{
\pgfplotstableread{F3b.txt}{\table}
    \begin{tikzpicture}
        \begin{axis}[
            xmin = 0, xmax = 10,
            ymin = 1.5, ymax = 4.9,
            xtick distance = 2,
            ytick distance = 1,
            grid = both,
            minor tick num = 1,
            major grid style = {lightgray},
            minor grid style = {lightgray!25},
            width = \textwidth,
            height = 0.75\textwidth,
            legend cell align = {left},
            legend pos = north west
        ]
        
            \addplot[blue, mark = *, mark size = 0.3pt, line width = 1pt] table [x = {x}, y = {Q1Q1_tau}] {\table};
            \addlegendentry{\normalsize ${\mathbb E}[Q_1(t)Q_1(t+\tau)]$}
            \addplot[blue, dashed, mark = *, mark size = 0.3pt, line width = 1pt] table [x = {x}, y = {Q1Q1_tau_sim}] {\table};
            \addlegendentry{\normalsize ${\mathbb E}[Q_1(t)Q_1(t+\tau)]$, sim.}
            \addplot[magenta, mark = *, mark size = 0.3pt, line width = 1pt] table [x = {x}, y = {Q1Q2_tau}] {\table};
            \addlegendentry{\normalsize ${\mathbb E}[Q_1(t)Q_2(t+\tau)]$}
            \addplot[magenta, dashed, mark = *, mark size = 0.3pt, line width = 1pt] table [x = {x}, y = {Q1Q2_tau_sim}] {\table};
            \addlegendentry{\normalsize ${\mathbb E}[Q_1(t)Q_2(t+\tau)]$, sim.}
        \end{axis}
\end{tikzpicture}

 \pgfplotstableread{F3a.txt}{\table}
    \begin{tikzpicture}
        \begin{axis}[
            xmin = 0, xmax = 10,
            ymin = 1.5, ymax = 3.5,
            xtick distance = 2,
            ytick distance = 1,
            grid = both,
            minor tick num = 1,
            major grid style = {lightgray},
            minor grid style = {lightgray!25},
            width = \textwidth,
            height = 0.75\textwidth,
            legend cell align = {left},
            legend pos = north west
        ]
        
            \addplot[blue, mark = *, mark size = 0.3pt, line width = 1pt] table [x = {x}, y = {Q2Q2_tau}] {\table};
            \addlegendentry{\normalsize ${\mathbb E}[Q_2(t)Q_2(t+\tau)]$}
            \addplot[blue, dashed, mark = *, mark size = 0.3pt, line width = 1pt] table [x = {x}, y = {Q2Q2_tau_sim}] {\table};
            \addlegendentry{\normalsize ${\mathbb E}[Q_2(t)Q_2(t+\tau)]$, sim.}
            \addplot[magenta, mark = *, mark size = 0.3pt, line width = 1pt] table [x = {x}, y = {Q2Q1_tau}] {\table};
            \addlegendentry{\normalsize ${\mathbb E}[Q_2(t)Q_1(t+\tau)]$}
            \addplot[magenta, dashed, mark = *, mark size = 0.3pt, line width = 1pt] table [x = {x}, y = {Q2Q1_tau_sim}] {\table};
            \addlegendentry{\normalsize ${\mathbb E}[Q_2(t)Q_1(t+\tau)]$, sim.}
        \end{axis}
\end{tikzpicture}

}

\centering
\resizebox{15.9cm}{5.5cm}{
\pgfplotstableread{F3d.txt}{\table}
    \begin{tikzpicture}
        \begin{axis}[
            xmin = 0, xmax = 10,
            ymin = 3, ymax = 8,
            xtick distance = 2,
            ytick distance = 1,
            grid = both,
            minor tick num = 1,
            major grid style = {lightgray},
            minor grid style = {lightgray!25},
            width = \textwidth,
            height = 0.75\textwidth,
            legend cell align = {left},
            legend pos = north west
        ]
        
            \addplot[blue, mark = *, mark size = 0.3pt, line width = 1pt] table [x = {x}, y = {L1L1_tau}] {\table};
            \addlegendentry{\normalsize ${\mathbb E}[L_1(t)L_1(t+\tau)]$}
            \addplot[blue, dashed, mark = *, mark size = 0.3pt, line width = 1pt] table [x = {x}, y = {L1L1_tau_sim}] {\table};
            \addlegendentry{\normalsize ${\mathbb E}[L_1(t)L_1(t+\tau)]$, sim.}
            \addplot[magenta, mark = *, mark size = 0.3pt, line width = 1pt] table [x = {x}, y = {L1L2_tau}] {\table};
            \addlegendentry{\normalsize ${\mathbb E}[L_1(t)L_2(t+\tau)]$}
            \addplot[magenta, dashed, mark = *, mark size = 0.3pt, line width = 1pt] table [x = {x}, y = {L1L2_tau_sim}] {\table};
            \addlegendentry{\normalsize ${\mathbb E}[L_1(t)L_2(t+\tau)]$, sim.}
        \end{axis}
\end{tikzpicture}

 \pgfplotstableread{F3c.txt}{\table}
    \begin{tikzpicture}
        \begin{axis}[
            xmin = 0, xmax = 10,
            ymin = 4, ymax = 14,
            xtick distance = 2,
            ytick distance = 2,
            grid = both,
            minor tick num = 1,
            major grid style = {lightgray},
            minor grid style = {lightgray!25},
            width = \textwidth,
            height = 0.75\textwidth,
            legend cell align = {left},
            legend pos = north west
        ]
        
            \addplot[blue, mark = *, mark size = 0.3pt, line width = 1pt] table [x = {x}, y = {L2L2_tau}] {\table};
            \addlegendentry{\normalsize ${\mathbb E}[\lambda_2(t)\lambda_2(t+\tau)]$}
            \addplot[blue, dashed, mark = *, mark size = 0.3pt, line width = 1pt] table [x = {x}, y = {L2L2_tau_sim}] {\table};
            \addlegendentry{\normalsize ${\mathbb E}[\lambda_2(t)\lambda_2(t+\tau)]$, sim.}
            \addplot[magenta, mark = *, mark size = 0.3pt, line width = 1pt] table [x = {x}, y = {L2L1_tau}] {\table};
            \addlegendentry{\normalsize ${\mathbb E}[\lambda_2(t)\lambda_1(t+\tau)]$}
            \addplot[magenta, dashed, mark = *, mark size = 0.3pt, line width = 1pt] table [x = {x}, y = {L2L1_tau_sim}] {\table};
            \addlegendentry{\normalsize ${\mathbb E}[\lambda_2(t)\lambda_1(t+\tau)]$, sim.}
        \end{axis}
\end{tikzpicture}

}

\caption{Computation of cross-moments for $t=1.5$ and $\tau \in[0,10]$ using the joint transform characterization (solid lines) compared to Monte Carlo simulated averages (dashed lines).}
\label{fig: F3}
\end{figure}
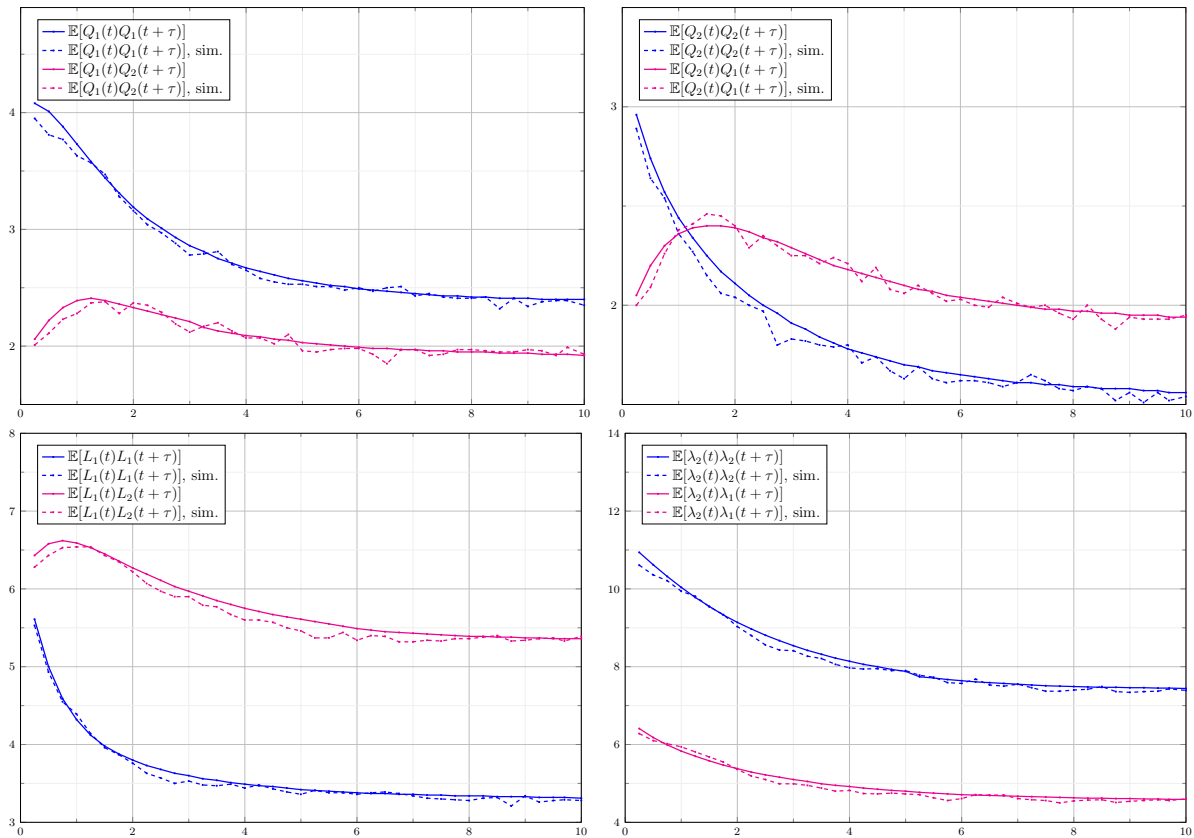

We conclude our numerical section by an experiment that quantifies the effect of the initial values.
In Theorem~\ref{thm: zeta characterization}, we characterized the joint transform with the processes being initialized at $\bm{Q}(t_0) = (Q_1(t_0),Q_2(t_0)) = (q_{1,0},q_{2,0}) \in \nn^2$ and $\bm{\lambda}(t_0) = (\lambda_1(t_0),\lambda_2(t_0)) = (\lambda_{1,0}, \lambda_{2,0}) \in \rr_+^2$ for some $t_0>0$.
By applying FD, we can compute the moments of our interest for any initial value, for instance
\begin{align}
    \ee[Q_i(t) \, | \,Q_i(t_0) = q_{i,0}], \quad \ee[\lambda_i(t) \, | \,  \lambda_i(t_0) = \lambda_{i,0}],
\end{align}
with $i=1,2$, where $q_{i,0} \in \nn$ and $\lambda_{i,0}\in\rr_+$.
In our experiment we focus on the moments of $Q_1(\cdot)$ and $\lambda_1(\cdot)$, studying the effect of three different choices of $q_{i,0}$ and $\lambda_{i,0}$ on the first moments and variances as a function of $t$.
Note that a different value of $Q_i(t_0)= q_{i,0}$ will not influence $\lambda_j(\cdot)$, since the population processes do not directly affect the intensity processes, but due to mutual excitation, the values $\lambda_1(t_0) = \lambda_{1,0}$ and $\lambda_2(t_0) = \lambda_{2,0}$ do matter.
When computing $\ee[Q_1(t) | Q_1(t_0) = q_{1,0}]$, we leave $\lambda_i(t_0) = \lambda_i(0) = \overline{\lambda}_i$ for $i=1,2$, and we only change $q_{1,0}$.
Similarly, when computing $\ee[\lambda_1(t) | \lambda_1(t_0) = \lambda_{1,0}]$, we leave $\lambda_2(t_0) = \lambda_2(0) = \overline{\lambda}_2$ and only change $\lambda_{1,0}$.

Figure~\ref{fig: F4} shows the expectations and variances, where we introduced the compact notation $\ee[Q_1(t) \,|\, q_{1,0}] = \ee[Q_1(t)\, | \,Q_1(t_0)= q_{1,0}]$ and $\ee[\lambda_1(t) |\lambda_{1,0}] = \ee[\lambda_1(t) | \lambda_1(t_0) = \lambda_{1,0}]$, and similar notation for the variances.
For the moments of $Q_1(t)$, we observe a vertical shift of the plots, which is expected since the arrived individuals depart independently and according to the same distribution.
For the moments of $\lambda_1(t)$, we see that the effect of the $\lambda_{1,0}$-value is substantial. 
For both the mean and the variance there is convergence to their respective steady-state values.

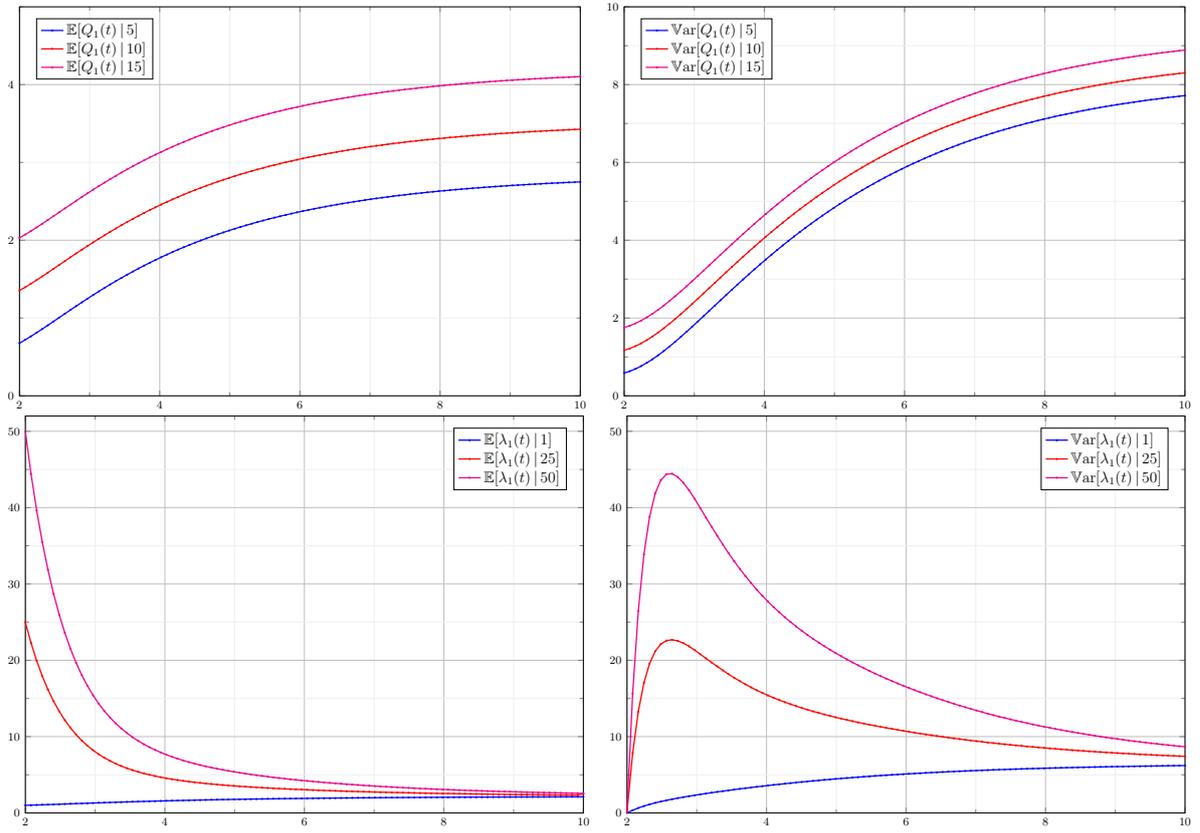
\begin{figure}
\centering
\resizebox{15.9cm}{5.5cm}{
\pgfplotstableread{F4a.txt}{\table}
    \begin{tikzpicture}
        \begin{axis}[
            xmin = 2, xmax = 10,
            ymin = 0, ymax = 5,
            xtick distance = 2,
            ytick distance = 2,
            grid = both,
            minor tick num = 1,
            major grid style = {lightgray},
            minor grid style = {lightgray!25},
            width = \textwidth,
            height = 0.75\textwidth,
            legend cell align = {left},
            legend pos = north west
        ]
        
            \addplot[blue, mark = *, mark size = 0.3pt, line width = 1pt] table [x = {x}, y = {a}] {\table};
            \addlegendentry{\normalsize ${\mathbb E}[Q_1(t) \,|\, 5]$}
            \addplot[red, mark = *, mark size = 0.3pt, line width = 1pt] table [x = {x}, y = {b}] {\table};
            \addlegendentry{\normalsize ${\mathbb E}[Q_1(t) \,|\, 10]$}
            \addplot[magenta, mark = *, mark size = 0.3pt, line width = 1pt] table [x = {x}, y = {c}] {\table};
            \addlegendentry{\normalsize ${\mathbb E}[Q_1(t) \,|\, 15]$}
        \end{axis}
\end{tikzpicture}

\pgfplotstableread{F4b.txt}{\table}
    \begin{tikzpicture}
        \begin{axis}[
            xmin = 2, xmax = 10,
            ymin = 0, ymax = 10,
            xtick distance = 2,
            ytick distance = 2,
            grid = both,
            minor tick num = 1,
            major grid style = {lightgray},
            minor grid style = {lightgray!25},
            width = \textwidth,
            height = 0.75\textwidth,
            legend cell align = {left},
            legend pos = north west
        ]
        
            \addplot[blue, mark = *, mark size = 0.3pt, line width = 1pt] table [x = {x}, y = {a}] {\table};
            \addlegendentry{\normalsize ${\mathbb V}{\rm ar}[Q_1(t) \,|\, 5]$}
            \addplot[red, mark = *, mark size = 0.3pt, line width = 1pt] table [x = {x}, y = {b}] {\table};
            \addlegendentry{\normalsize ${\mathbb V}{\rm ar}[Q_1(t) \,|\, 10]$}
            \addplot[magenta, mark = *, mark size = 0.3pt, line width = 1pt] table [x = {x}, y = {c}] {\table};
            \addlegendentry{\normalsize ${\mathbb V}{\rm ar}[Q_1(t) \,|\, 15]$}
        \end{axis}
\end{tikzpicture}

}

\centering
\resizebox{15.9cm}{5.5cm}{

\pgfplotstableread{F4c.txt}{\table}
    \begin{tikzpicture}
        \begin{axis}[
            xmin = 2, xmax = 10,
            ymin = 0, ymax = 52,
            xtick distance = 2,
            ytick distance = 10,
            grid = both,
            minor tick num = 1,
            major grid style = {lightgray},
            minor grid style = {lightgray!25},
            width = \textwidth,
            height = 0.75\textwidth,
            legend cell align = {left},
            legend pos = north east
        ]
        
            \addplot[blue, mark = *, mark size = 0.3pt, line width = 1pt] table [x = {x}, y = {a}] {\table};
            \addlegendentry{\normalsize ${\mathbb E}[\lambda_1(t) \,|\, 1]$}
            \addplot[red, mark = *, mark size = 0.3pt, line width = 1pt] table [x = {x}, y = {b}] {\table};
            \addlegendentry{\normalsize ${\mathbb E}[\lambda_1(t) \,|\, 25]$}
            \addplot[magenta, mark = *, mark size = 0.3pt, line width = 1pt] table [x = {x}, y = {c}] {\table};
            \addlegendentry{\normalsize ${\mathbb E}[\lambda_1(t) \,|\, 50]$}
        \end{axis}
\end{tikzpicture}

\pgfplotstableread{F4d.txt}{\table}
    \begin{tikzpicture}
        \begin{axis}[
            xmin = 2, xmax = 10,
            ymin = 0, ymax = 52,
            xtick distance = 2,
            ytick distance = 10,
            grid = both,
            minor tick num = 1,
            major grid style = {lightgray},
            minor grid style = {lightgray!25},
            width = \textwidth,
            height = 0.75\textwidth,
            legend cell align = {left},
            legend pos = north east
        ]
        
            \addplot[blue, mark = *, mark size = 0.3pt, line width = 1pt] table [x = {x}, y = {a}] {\table};
            \addlegendentry{\normalsize ${\mathbb V}{\rm ar}[\lambda_1(t) \,|\, 1]$}
            \addplot[red, mark = *, mark size = 0.3pt, line width = 1pt] table [x = {x}, y = {b}] {\table};
            \addlegendentry{\normalsize ${\mathbb V}{\rm ar}[\lambda_1(t) \,|\, 25]$}
            \addplot[magenta, mark = *, mark size = 0.3pt, line width = 1pt] table [x = {x}, y = {c}] {\table};
            \addlegendentry{\normalsize ${\mathbb V}{\rm ar}[\lambda_1(t) \,|\, 50]$}
        \end{axis}
\end{tikzpicture}

}

\caption{Computation of expected values and variances of $Q_1(t)$ and $\lambda_1(t)$, with different initial values at $t_0=2$, with $t \in [t_0,10]$, using the joint transform characterization.}
\label{fig: F4}
\end{figure}

\end{document}